\numberwithin{equation}{section}
\newcommand{\T}{\mathbb{T}}
\newcommand{\N}{\mathbb{N}}
\newcommand{\R}{\mathbb{R}}
\newcommand{\C}{\mathbb{C}}
\newcommand{\D}{\mathbb{D}}
\newcommand{\K}{\mathbb{K}}
\newcommand{\LL}{\mathbb{L}}
\newcommand{\M}{\mathbb{M}}
\newcommand{\cI}{{\mathcal{I}}}
\newcommand{\cD}{{\mathcal{D}}}
\newcommand{\cH}{{\mathcal{H}}}
\newcommand{\beqa}{\begin{eqnarray*}}
\newcommand{\bea}{\begin{eqnarray}}
\newcommand{\eeqa}{\end{eqnarray*}}
\newcommand{\eea}{\end{eqnarray}}
\newcommand{\dst}{\displaystyle}
\newtheorem{thm}{\bf Theorem}[section]
\newtheorem{coro}[thm]{\bf Corollary}
\newtheorem*{prop*}{\bf Proposition}
\newtheorem{lem}[thm]{\bf Lemma}
\begin{document}

\title{Carleson's formula for some weighted Dirichlet spaces}

\date{\today}
\subjclass[2000]{primary 46E20; secondary 30C85, 30J99} 
\keywords{Dirichlet space, Carleson formula, Douglas formula}
\thanks{This work was supported by ANR FRAB: ANR-09-BLAN-0058-02}

\author[B. Bouya, A. Hartmann]{B. Bouya and A. Hartmann}

\address{Universit\'e de Bordeaux, CNRS, Bordeaux INP, IMB, UMR 5251, 351 Cours de la Lib\'eration, F-33400, Talence, France.}
\email{Andreas.Hartmann@math.u-bordeaux.fr}

\begin{abstract}
{We extend Carleson's formula to radially polynomially weighted Dirichlet spaces.}
\end{abstract}
\maketitle

{\it Dedication: This paper is dedicated to the memory of Mohamed Zarrabi who sadly past
away in december 2021. He was a very esteemed colleague which we all miss in Bordeaux.
The results presented here had mainly been elaborated a very long time 
ago when 
Brahim Bouya was a postdoc at the University Bordeaux 1. This paper is also the occasion to bring 
back memories of Brahim who left this world prematurely in 2020. Even though they did not
work explicitely together on Dirichlet spaces --- one of Mohamed's research directions --- 
Brahim was one of Mohamed's co-authors. It appears natural to present this work in this
special edition of the Moroccan Journal of Pure and Applied Analysis.}

\section{ \bf Introduction and statement of the main results.}
Let $\cD$ be the standard Dirichlet space of analytic functions with
square area integrable modulus of the derivative on the unit disk $\D$ of the complex plane $\C$
(see precise definitions below).
It is well known that $\cD$ is contained in the Hardy space $\cH^2$
(see for instance \cite{R}),
and thus that every function $f\in \cD$ has non-tangential boundary
values almost everywhere on $\T:=\partial\D$ which have square integrable modulus
on $\T$. Even more is true, those functions $f$ admit actually non-tangential
limits quasi-everywhere on $\T,$ see \cite{Beu, FKR, R}.

While the norm of a function $f\in\cD$ is {\it a priori} defined {\it via} the values
of its derivative on the unit disk $\D$ it is possible to express it
by its values on $\T$ {\it only}. Indeed, Douglas' formula (see \eqref{DouglasFormula})
gives a
characterization involving difference quotients on the boundary \cite{JD}.
We refer for instance to the survey paper \cite{R} and the textbook 
\cite{FKR} for more information on Dirichlet spaces.

A special attention in this connection was attracted by outer functions in $\cD$
since they are completely determined by their {\it moduli} on the boundary.
Indeed, a famous result by Carleson \cite{Car} states that
the norm of an outer function $f$ in $\cD$ can be completely recovered
from its moduli on the boundary (see \eqref{carlesonformula}).
Later, analogs
of Carleson's formula were established in other classes of analytic functions, such
as that given by Vinogradov and Shirokov \cite{VS} for the space of analytic functions with derivative in the classical Hardy space $\cH^p$ and also in \cite[Theorem 3.1]{Shi2} for some spaces of analytic functions smooth up to the boundary.

Another result that is worth being mentioned here is by Aleman \cite{Ale}
who characterizes the norm of some Dirichlet type functions
in terms of their moduli and involving mean oscillation of the
function's modulus with respect to harmonic measure, see also
\cite{Boe, Dya, Shi1} and the survey paper \cite{ARSW}, but this characterization uses also the values
of the modulus of $f$ inside the disk.
The aim of this paper is to generalize Carleson's result to weighted Dirichlet
spaces for which an analog of Douglas' formula is actually known (see
\eqref{DouglasFormulaalpha}). Without entering into the very definitions
of weighted Dirichlet spaces $D_{\mu}$ associated to a measure
$\mu$, we mention that when $\mu$ is supported on $\T$, Richter \cite{Ri}
introduced and studied these spaces
as part of his analysis of two-isometric operators. In \cite{RS} Richter and Sundberg give a Carleson type formula for the spaces $D_{\mu},$ when $\mu$ is supported on $\T.$ 

In this paper, we are interested in the case of polynomial radial weights in the disk.
In this situation, our characterization recovers Carleson's result in the
limiting situation when the weight becomes constant (with non optimal
constants however).
\\

In order to be more precise, we now introduce
the weighted Dirichlet spaces we are interested in.
Let $\cD_{\alpha}$ be the space of analytic functions $f$ on  $\D$ with a
finite  weighted Dirichlet integral
\begin{equation}\label{domega}
\cD_{\alpha}(f):=\frac{1}{\pi} \int_{\D}|f'(z)|^2(1-|z|)^\alpha dA(z), 
\end{equation}
where $A$ is the standard area Lebesgue measure and $0\leq\alpha<1$
is a real number. Equipped with the norm
\begin{equation}\label{nrm}
\|f\|^{2}_{\cD_{\alpha}}:= |f(0)|^{2}+\cD_{\alpha}(f),
\end{equation}
the space $\cD_{\alpha}$ becomes a Hilbert space.
The limit case $\cD:=\cD_{0}$ is the classical Dirichlet space, and the case $\alpha=1$ corresponds
to the classical Hardy space $\mathcal{H}^2$. We denote by
 $\mathcal{L}^{2}(\T)$ the space of complex valued functions with square integrable modulus on $\T$.
Note that we can define an equivalent norm in $\cD_{\alpha}$
by $(\|f\|_2^2+\cD_{\alpha}(f))^{1/2}$, where 
$\|f\|_2$ is the standard norm in $\mathcal{L}^2(\T)$.

In all what follows we suppose that
$h\in\mathcal{L}^{2}(\T)$  is a non negative function such that
\begin{eqnarray}\label{logint}
\int_{-\pi}^{\pi}\log h(t) dt>-\infty,
\end{eqnarray}
where we identify the circle and the real line $\R$ by
$h(t):=h(e^{it})$, $t\in\R$.
By well known Hardy space theory (see for instance \cite{Gar})
we can associate with $h$ the outer function $O_{h},$ defined by
$$O_{h}(z):=\exp\{u_{_h}(z)+iv_{_h}(z)\},\qquad z\in\D,$$
where $$u_{_h}(z):=\displaystyle\frac{1}{2\pi}\int_{-\pi}^{\pi}\text{Re}\Big(\frac{e^{i\varphi}+z}{e^{i\varphi}-z}\Big)\log h(\varphi)d\varphi,  \quad z\in\D,$$
and $v_{_h}$ is the harmonic conjugate of the harmonic function $u_{_h}$ given by
$$v_{_h}(z):=\frac{1}{2\pi}\int_{-\pi}^{\pi}\text{Im}\Big(\frac{e^{i\varphi}+z}{e^{i\varphi}-z}\Big)\log h(\varphi)d\varphi, \qquad z\in\D.$$
The non tangential limits of $|O_{h}|$ exist and coincide with $h$ on $\T$ almost everywhere with respect to Lebesgue measure.
When studying the Plateau problem, Jesse
Douglas \cite{JD} obtained the following formula
for $f\in \cH^2$,
\begin{equation}\label{DouglasFormula}
\cD(f)=\frac{1}{4\pi^2}\int_{-\pi}^{\pi}\int_{-\pi}^{\pi} \
\left| \frac{f(\theta)-f(\varphi) }{e^{i\varphi}-e^{i\theta}}\right|^2d\theta d\varphi,
\end{equation}
which expresses the Dirichlet integral in terms of values of $f$ on the boundary
$\T$ only. The formula generalizes to weighted spaces $\cD_{\alpha}$ where
equality is replaced by equivalence (see for instance \cite{DH, FKR}):
\begin{equation}\label{DouglasFormulaalpha}
\cD_{\alpha}(f)\asymp \int_{-\pi}^{\pi}\int_{-\pi}^{\pi} \
 \frac{|f(\theta)-f(\varphi)|^2 }{|e^{i\varphi}-e^{i\theta}|^{2-\alpha}}d\theta d\varphi.
\end{equation}
When $f$ is outer, then it is uniquely determined by the modulus of its boundary
values, and one may ask whether it is then possible to express the Dirichlet
integral by these moduli only.
In \cite{Car}, Carleson  proved the following formula
\begin{equation}\label{carlesonformula}
\cD(O_{h})=\frac{1}{4\pi^2}\int_{-\pi}^{\pi}\int_{-\pi}^{\pi} \frac{\big(h^2(\varphi)-h^2(\theta)\big)\log \frac{h(\varphi)}{h(\theta)}}{|e^{i\varphi}-e^{i\theta}|^2}d\varphi d\theta  ,
\end{equation}
which thus allows to express the
norm
of outer functions in $\cD$ by their moduli on the boundary. Carleson actually
proved a more general result taking into account also the inner part, but then, obviously, the Dirichlet integral is no longer
given by the modulus of its boundary values only, and one has to consider the zeros
of the Blaschke factor and the singular measure.
A main ingredient in the proof of
\eqref{carlesonformula} is the classical Stokes formula which is in fact
not adapted to the situation in $\cD_{\alpha}.$

A natural guess for a candidate replacing \eqref{carlesonformula} in
the space $\mathcal{D}_{\alpha}$ would be
$$\mathcal{C}_\alpha(h):=\int_{-\pi}^{\pi}\int_{-\pi}^{\pi} \frac{\big(h^2(\varphi)-h^2(\theta)\big)\log \frac{h(\varphi)}{h(\theta)}}{|e^{i\varphi}-e^{i\theta}|^{2-\alpha}}d\varphi d\theta.$$
However, as it turns out,
there are functions $f\in \mathcal{D}_{\alpha},$ when $0<\alpha<1$, for which
$\mathcal{C}_{\alpha}(|f|)$ is not finite, see Theorem \ref{example} below.
\\

Note that an elementary computation yields that
for strictly positive numbers $a$ and $b$ we have
\begin{eqnarray}\label{logequiv}
 0\le (a^2-b^2)\log\frac{a}{b}
 \asymp \left\{
 \begin{array}{ll}
 (a-b)^2, & \quad \text{ if } \frac{1}{2}b\leq a\leq2 b,\\
 a^2\log\frac{a}{b}, & \quad \text{ if }a\ge 2b,\\
 b^2\log\frac{b}{a}, & \quad  \text{ if }a\le \frac{1}{2}b.
 \end{array}
 \right.
\end{eqnarray}
So, in the characterization that we propose below, according to the three cases
appearing in \eqref{logequiv}, we will distinguish what happens on the different parts
of the circle when the quotient $h(\varphi)/h(\theta)$ is bigger than 2, less
than 1/2 or between 1/2 and 2.
In order to be more precise, we need to introduce some notation.
Let $\Lambda$ be the set of measurable functions
on $\T$ that are strictly positive a.e.\ with respect to Lebesgue measure.
For $h$ and $\lambda\in\Lambda,$ we set
\begin{equation}\label{harmnorm}
N_{\alpha}(h):=\int_{-\pi}^{\pi}\int_{-\pi}^{\pi}\frac{|h(\varphi)-h(\theta)|^2}{|e^{i\varphi}-e^{i\theta}|^{2-\alpha}}d\varphi d\theta,
\end{equation}
\begin{equation}\label{definnalpha}
n_{\alpha}(h,\lambda):=\int_{-\pi}^{\pi}h^{2}(\theta) \Big(\int_{h(\varphi)\leq\frac{1}{2}h(\theta)\atop |e^{i\varphi}-e^{i\theta}|\geq \lambda(\theta)}\frac{\log \frac{h(\theta)}{h(\varphi)}}{|e^{i\varphi}-e^{i\theta}|^2}d\varphi\Big)^{1-\alpha}d\theta
\end{equation}
and
\begin{equation}\label{defintildenalpha}
\widetilde{n}_{\alpha}(h,\lambda):=\int_{-\pi}^{\pi}h^{2}(\theta)\Big(\int_{h(\varphi)\leq\frac{1}{2}h(\theta)\atop |e^{i\varphi}-e^{i\theta}|\leq \lambda(\theta) } \frac{\log \frac{h(\theta)}{h(\varphi)}}{|e^{i\varphi}-e^{i\theta}|^{2-\alpha}}d\varphi\Big) d\theta.
\end{equation}
Observe that by the triangular inequality, we have
\begin{equation}\label{estimNalpha}
 N_{\alpha}(h)\lesssim \mathcal{D}_{\alpha}(O_h).
\end{equation}

For two real valued functions $k_1$ and $k_2$  and a positive constant $c$
we use the notation
$k_1\stackrel{c}{\asymp} k_2,$ to design $ c^{-1}k_2\leq k_1\leq c k_2.$
By $k_1 {\asymp} k_2$ and $k_1\lesssim k_2$ we mean respectively that there exists some non specified constant $c$
such that $k_1\stackrel{c}{\asymp} k_2$ and $k_1\leq c k_2$.

We are now in a position to state our  first main result.

\begin{thm}\label{ra}
Let $0\leq\alpha<1$ be a real number.
Let $h\in\mathcal{L}^{2}(\T)$ be a non negative function satisfying \eqref{logint}. Then
  \begin{equation}\label{theorem}
\|O_h\|^{2}_{\cD_{\alpha}}\stackrel{c_\alpha}{\asymp} \|h\|^{2}_{2}+ N_{\alpha}(h)+ \inf_{\lambda\in\Lambda}\{ n_{\alpha}(h,\lambda)+\widetilde{n}_{\alpha}(h,\lambda)\},
  \end{equation}
where $c_\alpha\asymp 1$ when
$\alpha\to 0$.
\end{thm}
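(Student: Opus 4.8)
The plan is to start from the weighted Douglas formula \eqref{DouglasFormulaalpha}. Combined with the equivalent norm $(\|f\|_2^2+\cD_\alpha(f))^{1/2}$ and the fact that $|O_h|=h$ a.e.\ on $\T$, it reduces the theorem to proving
\[
I:=\int_{-\pi}^{\pi}\int_{-\pi}^{\pi}\frac{|O_h(\theta)-O_h(\varphi)|^2}{|e^{i\varphi}-e^{i\theta}|^{2-\alpha}}\,d\theta\,d\varphi\ \stackrel{c_\alpha}{\asymp}\ N_{\alpha}(h)+\inf_{\lambda\in\Lambda}\{n_{\alpha}(h,\lambda)+\widetilde{n}_{\alpha}(h,\lambda)\}
\]
up to the harmless additive term $\|h\|_2^2$. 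I would prove the two inequalities separately, paying attention to the role of the infimum: for the lower bound $\|h\|_2^2+N_\alpha(h)+\inf_\lambda\{\cdots\}\lesssim\|O_h\|^2_{\cD_{\alpha}}$ it suffices that $\|h\|_2^2\lesssim\|O_h\|^2_{\cD_{\alpha}}$ trivially, that $N_\alpha(h)\lesssim\cD_\alpha(O_h)$ is exactly \eqref{estimNalpha}, and to exhibit one convenient weight $\lambda_0\in\Lambda$ with $n_\alpha(h,\lambda_0)+\widetilde{n}_\alpha(h,\lambda_0)\lesssim I$; whereas for the upper bound $\|O_h\|^2_{\cD_{\alpha}}\lesssim$ RHS one must establish $I\lesssim\|h\|_2^2+N_\alpha(h)+n_\alpha(h,\lambda)+\widetilde{n}_\alpha(h,\lambda)$ for \emph{every} $\lambda\in\Lambda$ and only afterwards pass to the infimum.

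The next step is to decompose the domain according to the three regimes of \eqref{logequiv}, writing $E_1=\{\frac12 h(\theta)\le h(\varphi)\le 2h(\theta)\}$, $E_2=\{h(\varphi)\ge 2h(\theta)\}$ and $E_3=\{h(\varphi)\le\frac12 h(\theta)\}$; by the symmetry $(\theta,\varphi)\mapsto(\varphi,\theta)$ the contributions of $E_2$ and $E_3$ coincide, so I focus on $E_1$ and $E_3$. On $E_3$ one has the phase-free two-sided bound $|O_h(\theta)-O_h(\varphi)|^2\asymp h^2(\theta)$ (from $\frac12 h(\theta)\le h(\theta)-h(\varphi)\le|O_h(\theta)-O_h(\varphi)|\le h(\theta)+h(\varphi)\le\frac32 h(\theta)$), so that $I|_{E_3}\asymp\int h^2(\theta)\big(\int_{E_3}|e^{i\varphi}-e^{i\theta}|^{-(2-\alpha)}d\varphi\big)d\theta$. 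I then cut $E_3$ along the free radius $\lambda(\theta)$ into its near- and far-diagonal parts: on the near part I keep the kernel $|e^{i\varphi}-e^{i\theta}|^{-(2-\alpha)}$ and compare with $\widetilde{n}_\alpha$ via $\log\frac{h(\theta)}{h(\varphi)}\ge\log2$ on $E_3$; on the far part I use the elementary estimate $\int_{|e^{i\varphi}-e^{i\theta}|\ge\lambda}|e^{i\varphi}-e^{i\theta}|^{-(2-\alpha)}d\varphi\asymp\lambda^{-(1-\alpha)}$ together with a convexity (Jensen/Hölder) step that reinserts the logarithm and produces exactly the exponent $1-\alpha$ appearing in $n_\alpha$. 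This explains structurally why the naive candidate $\mathcal{C}_\alpha(h)$ fails: the genuine integrand on $E_3$ carries \emph{no} logarithmic factor, and the role of the infimum over $\lambda$ is precisely to redistribute mass between the near term $\widetilde{n}_\alpha$ (which overcounts by the logarithm) and the tamed far term $n_\alpha$ so that their sum recovers $I|_{E_3}$; the good $\lambda_0$ for the lower bound is the one balancing these two contributions.

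On the comparable region $E_1$ I split $|O_h(\theta)-O_h(\varphi)|^2=(h(\theta)-h(\varphi))^2+2h(\theta)h(\varphi)\big(1-\cos(v_{_h}(\theta)-v_{_h}(\varphi))\big)$. The first, modulus, part is a sub-integral of $N_\alpha(h)$, so it is harmless for the upper bound, while \eqref{estimNalpha} already supplies $N_\alpha(h)\lesssim\cD_\alpha(O_h)$ for the lower bound. The remaining phase part $\int\int_{E_1}\frac{2h(\theta)h(\varphi)\,(1-\cos(v_{_h}(\theta)-v_{_h}(\varphi)))}{|e^{i\varphi}-e^{i\theta}|^{2-\alpha}}\,d\theta\,d\varphi$ is where the real work lies: I must control the oscillation of the argument $v_{_h}$ of $O_h$ by the modulus data alone. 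The plan is to use $1-\cos t\le\frac12 t^2$ and then estimate $v_{_h}(\theta)-v_{_h}(\varphi)$ through the conjugation kernel, splitting it into a local part at the scale $|e^{i\varphi}-e^{i\theta}|$ (controlled by the local oscillation of $\log h$, hence by $N_\alpha$) and a global part fed by the large jumps of $\log h$ (controlled by the big-ratio terms $n_\alpha$ and $\widetilde{n}_\alpha$).

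I expect this last estimate --- bounding the conjugate-function contribution on $E_1$ by the modulus quantities --- to be the main obstacle. It is exactly the step that, for $\alpha=0$, Carleson handled by Stokes' theorem to obtain the exact cancellation producing \eqref{carlesonformula}; that tool is unavailable for the weight $(1-|z|)^\alpha$ with $0<\alpha<1$, so a purely real-variable substitute is needed. This is also the step responsible for the behaviour of the constant: the bound degrades as $\alpha\to1$ (the kernel exponent $2-\alpha$ approaches the non-integrable threshold and the splitting into local and global parts becomes lossy), while it collapses to Carleson's identity as $\alpha\to0$, yielding $c_\alpha\asymp1$ in that limit.
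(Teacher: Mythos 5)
There is a genuine gap, and it is structural. Your plan derives the lower bound for the infimum term by ``redistributing mass'' inside $I|_{E_3}$, where $E_3=\{h(\varphi)\le\frac12 h(\theta)\}$. But on $E_3$ you yourself observe that $|O_h(\theta)-O_h(\varphi)|\asymp h(\theta)\asymp|h(\theta)-h(\varphi)|$ with absolute constants, so $I|_{E_3}\asymp N_\alpha(h)|_{E_3}\lesssim N_\alpha(h)$. If one could really choose $\lambda_0$ with $n_\alpha(h,\lambda_0)+\widetilde n_\alpha(h,\lambda_0)\asymp I|_{E_3}$, the whole right-hand side of \eqref{theorem} would collapse to $\|h\|_2^2+N_\alpha(h)$; Theorem \ref{example} shows this is false (for $\frac12<\beta\le 1-\frac\alpha2$ one has $N_\alpha(h_\beta)<\infty$ but $O_{h_\beta}\notin\cD_\alpha$, hence $\inf_\lambda\{n_\alpha+\widetilde n_\alpha\}=\infty$). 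So the information carried by $n_\alpha+\widetilde n_\alpha$ beyond $N_\alpha$ cannot be extracted from the boundary double integral restricted to $E_3$, and your Jensen/H\"older step ``reinserting the logarithm'' on the far part cannot work: the logarithm is unbounded there and is genuinely absent from $I|_{E_3}$. In the paper these lower bounds come from the \emph{interior} of the disk: the stopping radius $\mu_{_h}$ of \eqref{nana1}, the estimate $\int h^2\mu_{_h}^{\alpha-1}\lesssim\cD_\alpha(O_h)$ of Lemma \ref{dauglas} (whose proof uses the value of $O_h$ at the interior point $z_h(\theta)$ and only then Douglas' formula), and, for $\widetilde n_\alpha$, the sign analysis of the kernel $Q(e^{i\varphi},z)$ on the region $\M_h$ in Lemmas \ref{hssab}--\ref{necess2}.

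The second gap is that the one estimate you flag as ``the main obstacle'' --- controlling the phase contribution on $E_1$ by modulus data --- is exactly where all the content lies once $E_2,E_3$ are absorbed into $N_\alpha$, and your proposed route (bound $1-\cos(v_{_h}(\theta)-v_{_h}(\varphi))$ by $\frac12|v_{_h}(\theta)-v_{_h}(\varphi)|^2$ and split the conjugation kernel) is not carried out and is not what the paper does; a pointwise local/global splitting of $v_{_h}(\theta)-v_{_h}(\varphi)$ is lossy in precisely the regime where $h$ has large jumps. The paper avoids boundary values of $v_{_h}$ entirely: it uses the Cauchy--Riemann identity \eqref{motassa2} to write $\cD_\alpha(O_h)\asymp\frac1{2\pi}\int_\D|O_h|^2\frac{\partial v_{_h}}{\partial\theta}\,dA_\alpha$, decomposes $\D$ into $\K_h$, $\LL^1_{h,\lambda}$, $\LL^2_{h,\lambda}$, and recovers the quadratic difference on the comparable region through the evenness of $q(t,r)$ in \eqref{eleme} (the symmetrization leading to \eqref{zero}--\eqref{qu6}), a cancellation that a direct triangle inequality on the boundary cannot reproduce. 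Your Douglas-formula reduction and the three-regime decomposition of \eqref{logequiv} are a reasonable opening, but as written the proposal proves neither direction of \eqref{theorem} for the $n_\alpha,\widetilde n_\alpha$ terms.
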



Let us consider the special situation when $\alpha=0$.
It is clear that
formula \eqref{theorem} does not depend on the choice $\lambda\in\Lambda$
when $\alpha=0$. 
Hence, in this case, the theorem gives
an equivalent expression to Carleson's formula \eqref{carlesonformula}.

An immediate consequence of this result is the following observation.
\begin{coro}\label{cor1}A bounded outer function $O_h$
which is also bounded away from zero
is in $\cD_{\alpha}$ if and only if
\[
 N_{\alpha}(h)<\infty.
\]
\end{coro}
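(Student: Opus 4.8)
The plan is to read off the corollary directly from Theorem \ref{ra}, using the two-sided bound $0<\delta\le h\le M<\infty$ a.e.\ that expresses the hypotheses (here $M=\|O_h\|_\infty$ and $\delta=\inf_{\T}|O_h|$, since $h=|O_h|$ a.e.\ on $\T$). The forward implication needs neither boundedness nor the full strength of the theorem: if $O_h\in\cD_\alpha$, then $\cD_\alpha(O_h)\le\|O_h\|^2_{\cD_\alpha}<\infty$, and \eqref{estimNalpha} gives $N_\alpha(h)\lesssim\cD_\alpha(O_h)<\infty$ at once.

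For the converse I would assume $N_\alpha(h)<\infty$ and show that the right-hand side of \eqref{theorem} is finite, so that Theorem \ref{ra} forces $O_h\in\cD_\alpha$. Since $h\le M$ a.e.\ and $\T$ has finite measure, $\|h\|^2_2<\infty$ is immediate. It then suffices to exhibit a single admissible weight $\lambda\in\Lambda$ for which both $n_\alpha(h,\lambda)$ and $\widetilde n_\alpha(h,\lambda)$ are finite, and I would simply take $\lambda$ to be a fixed constant $\lambda_0>0$. With this choice the term $n_\alpha(h,\lambda_0)$ is controlled by brute force: on its domain one has $\log\frac{h(\theta)}{h(\varphi)}\le\log(M/\delta)$, while the kernel $|e^{i\varphi}-e^{i\theta}|^{-2}$ is integrated only over $\{|e^{i\varphi}-e^{i\theta}|\ge\lambda_0\}$, away from the diagonal, so the inner integral is bounded by a constant depending on $\delta,M,\lambda_0$. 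Combined with $h^2(\theta)\le M^2$ this yields $n_\alpha(h,\lambda_0)<\infty$.

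The substantive estimate is the one for $\widetilde n_\alpha(h,\lambda_0)$, where the kernel $|e^{i\varphi}-e^{i\theta}|^{-(2-\alpha)}$ is \emph{not} integrable near the diagonal, so boundedness of the logarithm alone is useless; this is where the defining inequality $h(\varphi)\le\frac{1}{2}h(\theta)$ must be used. On that region, writing $a=h(\theta)$ and $b=h(\varphi)$ with $b\le a/2$, one gets $\tfrac{1}{2}a\le a-b\le a$, hence $|h(\varphi)-h(\theta)|^2=(a-b)^2\asymp a^2=h^2(\theta)$, while $\log 2\le\log\frac{a}{b}\le\log(M/\delta)$ gives $\log\frac{h(\theta)}{h(\varphi)}\asymp 1$. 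Therefore $h^2(\theta)\log\frac{h(\theta)}{h(\varphi)}\asymp|h(\varphi)-h(\theta)|^2$ with constants depending only on $M/\delta$ (consistently with the second line of \eqref{logequiv}). Substituting this equivalence under the integral sign bounds $\widetilde n_\alpha(h,\lambda_0)$ by a constant multiple of $N_\alpha(h)$, which is finite by hypothesis.

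Putting the three bounds together makes the right-hand side of \eqref{theorem} finite, which proves the corollary. I expect the main obstacle to be precisely the treatment of $\widetilde n_\alpha$: because the kernel is singular on the diagonal it cannot be discarded, and the whole point is that the jump condition $h(\varphi)\le\frac{1}{2}h(\theta)$, read against the two-sided bound on $h$, renders the integrand comparable to that of $N_\alpha(h)$.
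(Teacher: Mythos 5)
Your proposal is correct, but it handles the converse differently from the paper. Both arguments read the corollary off Theorem \ref{ra}; the difference lies in how the terms $n_{\alpha}(h,\lambda)$ and $\widetilde{n}_{\alpha}(h,\lambda)$ are disposed of. The paper's proof replaces the threshold $\tfrac12$ in the definitions \eqref{definnalpha} and \eqref{defintildenalpha} by $c^{-2}$, where $c^{-1}\le h\le c$, so that the integration domains $\{h(\varphi)\le c^{-2}h(\theta)\}$ become void and both terms vanish identically; this is very short but implicitly relies on the (asserted, not re-proved) fact that Theorem \ref{ra} remains valid with any constant in $(0,1)$ in place of $\tfrac12$. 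You instead keep the theorem exactly as stated, choose a constant $\lambda_0$, kill $n_{\alpha}(h,\lambda_0)$ by the off-diagonal boundedness of the kernel together with the two-sided bound on $h$, and control $\widetilde{n}_{\alpha}(h,\lambda_0)$ by observing that on $\{h(\varphi)\le\tfrac12 h(\theta)\}$ one has $h^{2}(\theta)\log\frac{h(\theta)}{h(\varphi)}\asymp|h(\theta)-h(\varphi)|^{2}$ (constants depending on $\sup h/\inf h$), so that this term is dominated by $N_{\alpha}(h)$. All three estimates are sound, and your forward implication via \eqref{estimNalpha} is the same as the paper's. What your route buys is that it uses Theorem \ref{ra} as a black box with the constant $\tfrac12$ untouched; what the paper's route buys is brevity, at the price of invoking the flexibility of that constant. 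Either way the right-hand side of \eqref{theorem} reduces to $\|h\|_{2}^{2}+N_{\alpha}(h)$ up to constants depending on $\sup h/\inf h$, which is the content of the corollary.
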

We include the simple proof of this fact here.
\begin{proof}
We have
\[
 c^{-1}\le h\le c,
\]
for some positive constant $c>1.$
Then, for almost all $\theta$ and $\varphi$,
\[
c^{-2}\le\frac{h(\theta)}{h(\varphi)}\le c^2.
\]
One could replace the constant $1/2$ appearing
in the definitions \eqref{definnalpha} and \eqref{defintildenalpha} by any other value in $(0,1),$ 
say $c^{-2}.$
In this case, the expressions $n_{\alpha}(h,\lambda),$ $\widetilde{n}_{\alpha}(h,\lambda)$ and $m(h,\lambda)$
are zero since we integrate over void domains.
\end{proof}

We shall now discuss an appropriate choice for the function
$\lambda$ in the above theorem. In order to do this we
associate  with $h$ and $\lambda\in\Lambda$
the following functions
\begin{equation}\label{ahlambda}
a_{_{h,\lambda}}(\theta):= \frac{1}{2\pi}\int_{h(\varphi)\leq\frac{1}{2}h(\theta)\atop |e^{i\varphi}-e^{i\theta}|\geq \lambda(\theta)}\frac{\log \frac{h(\theta)}{h(\varphi)}}{|e^{i\varphi}-e^{i\theta}|^2}d\varphi
\end{equation}
and
\begin{equation}\label{ahlambda2}
\widetilde{a}_{_{h,\lambda}}(\theta):=\frac{1}{2\pi}\int_{h(\varphi)\leq\frac{1}{2}h(\theta)\atop
|e^{i\varphi}-e^{i\theta}|\leq \lambda(\theta)}\log\frac{h(\theta)}{h(\varphi)}d\varphi.
\end{equation}
The function $\lambda\times a_{h,\lambda}$ has an interpretation as a Poisson integral 
at $z(\theta)=(1-\lambda(\theta))e^{i\theta}$ of the function
$\log\frac{h(\theta)}{h(\varphi)}\chi$ where $\chi$ is the characteristic function of
$\{\varphi:h(\varphi)\le \frac{1}{2}h(\theta),|e^{i\theta}-e^{i\varphi}|\ge\lambda\}$.
For this one can observe that $|e^{i\varphi}-e^{i\theta}|\asymp |e^{i\varphi}-z(\theta)|$.
Similarly,
$ \tilde{a}_{h,\lambda}/\lambda$ as a Poisson integral 
at $z(\theta)$ of the function
$\log\frac{h(\theta)}{h(\varphi)}\tilde{\chi}$ where $\tilde{\chi}$ is the characteristic function of
$\{\varphi:h(\varphi)\le \frac{1}{2}h(\theta),|e^{i\theta}-e^{i\varphi}|\le\lambda\}$.
Actually, $\lambda a_{h,\lambda}+\tilde{a}_{h,\lambda}/\lambda$ is equivalent to the 
Poisson integral of $\log \frac{h(\theta)}{h(\varphi)}\chi_{h(\varphi)\le h(\theta)/2}$ at $z(\theta)$.

We set
\begin{eqnarray}\label{nana1}
\mu_{_h}(\theta):=\sup\left\{\mu\in (0,1]:\sup_{0<\delta\le\mu}
\Big\{
\delta a_{_{h,\delta}}(\theta), 
\frac{\widetilde{a}_{_{h,\delta}}(\theta)}{\delta}\Big\}\leq 2\right\}.
\end{eqnarray}
This allows to state our second main result.

\begin{thm}\label{ra2}
Let $0\leq\alpha<1$ be a real number.
Let $h\in\mathcal{L}^{2}(\T)$ be a non negative function satisfying \eqref{logint} and such that $N_{\alpha}(h)<+\infty.$ Then
$\mu_{_h}\in\Lambda$ and
  \begin{equation}\label{theorem2}
  \|O_h\|^{2}_{\cD_{\alpha}}
\stackrel{c_{\alpha}}{\asymp}  \|h\|^{2}_{2}+ N_{\alpha}(h)+  n_{\alpha}(h)+\widetilde{n}_{\alpha}(h),
  \end{equation}
where $n_{\alpha}(h):=n_{\alpha}(h,\mu_{_h})$ and $\widetilde{n}_{\alpha}(h):=\widetilde{n}_{\alpha}(h,\mu_{_h}),$ 
and $c_{\alpha}\asymp 1$
when $\alpha\to 0$.
\end{thm}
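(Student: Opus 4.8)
The plan is to deduce Theorem~\ref{ra2} from Theorem~\ref{ra}. Since \eqref{theorem} already gives $\|O_h\|_{\cD_\alpha}^2\stackrel{c_\alpha}{\asymp}\|h\|_2^2+N_\alpha(h)+\inf_{\lambda\in\Lambda}\{n_\alpha(h,\lambda)+\widetilde{n}_\alpha(h,\lambda)\}$, everything reduces to two points: first, that the weight $\mu_h$ of \eqref{nana1} is admissible, i.e.\ $\mu_h\in\Lambda$; and second, that this particular weight essentially realises the infimum, namely
\[
n_\alpha(h,\mu_h)+\widetilde{n}_\alpha(h,\mu_h)\lesssim n_\alpha(h,\lambda)+\widetilde{n}_\alpha(h,\lambda)\qquad\text{for every }\lambda\in\Lambda.
\]
Granting these, $\mu_h\in\Lambda$ gives $\inf_\lambda\{\cdots\}\le n_\alpha(h)+\widetilde{n}_\alpha(h)$, while the displayed inequality gives the reverse $n_\alpha(h)+\widetilde{n}_\alpha(h)\lesssim\inf_\lambda\{\cdots\}$; hence $\inf_\lambda\{\cdots\}\asymp n_\alpha(h)+\widetilde{n}_\alpha(h)$, and inserting this into \eqref{theorem} yields \eqref{theorem2}. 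Both points are local in $\theta$: writing $F(\delta):=\delta a_{h,\delta}(\theta)$ and $G(\delta):=\widetilde{a}_{h,\delta}(\theta)/\delta$, I work at a fixed generic $\theta$ with the cumulative mass $\Psi(r):=\int_{h(\varphi)\le h(\theta)/2,\ |e^{i\varphi}-e^{i\theta}|\le r}\log\frac{h(\theta)}{h(\varphi)}\,d\varphi$, noting that the inner integrals of \eqref{definnalpha} and \eqref{defintildenalpha} are respectively $\int_\lambda^2 r^{-2}\,d\Psi(r)=2\pi a_{h,\lambda}(\theta)$ and $\int_0^\lambda r^{\alpha-2}\,d\Psi(r)$.

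For $\mu_h\in\Lambda$ it suffices to show $F(\delta)\to 0$ and $G(\delta)\to 0$ as $\delta\to 0$ for a.e.\ $\theta$, since then $\sup_{0<\delta\le\mu}\{F,G\}\le 2$ for all small $\mu$, forcing $\mu_h(\theta)>0$. At a Lebesgue point $\theta$ of $h$ in the $\mathcal{L}^2$ sense, the set $\{h(\varphi)\le h(\theta)/2\}$ has density $0$ at $\theta$ because on it $|h(\varphi)-h(\theta)|\ge h(\theta)/2$; combined with the $L^1$ Lebesgue-point property of $\log h$ (legitimate since \eqref{logint} and $h\in\mathcal{L}^2(\T)$ give $\log h\in L^1(\T)$) and the splitting $\log h=(\log h-\log h(\theta))+\log h(\theta)$, one obtains $G(\delta)=\Psi(\delta)/(2\pi\delta)\to 0$. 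The same local estimate, fed into a dyadic decomposition of the annuli $\{2^k\delta\le|e^{i\varphi}-e^{i\theta}|\le 2^{k+1}\delta\}$ together with dominated convergence in $k$, gives $F(\delta)\to 0$. This is essentially the a.e.\ vanishing of the radial limit of the Poisson integral of $\log\frac{h(\theta)}{h(\varphi)}\chi$ described after \eqref{ahlambda2}, the only subtlety being the self-reference of the weight in $\theta$, resolved by the above splitting.

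The heart of the proof is the optimality of $\mu_h$, which I establish pointwise: I claim $\Phi(\mu_h(\theta))\lesssim_\alpha\Phi(t)$ for all $t>0$, where $\Phi(t):=(2\pi a_{h,t}(\theta))^{1-\alpha}+\int_0^t r^{\alpha-2}\,d\Psi(r)$ is the $\theta$-integrand of $n_\alpha+\widetilde{n}_\alpha$ stripped of the factor $h(\theta)^2$; integrating against $h^2\,d\theta$ then gives the displayed inequality. Here I use that $a_{h,t}$ is decreasing and $\int_0^t r^{\alpha-2}d\Psi$ increasing in $t$, that by \eqref{nana1} the twin bounds $F(t)\le 2$ and $G(t)\le 2$ hold for all $t\le\mu_h$, and that (by continuity and maximality of $\mu_h$) $\max\{F(\mu_h),G(\mu_h)\}\asymp 1$ when $\mu_h<1$. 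For $t\ge\mu_h$ only $(2\pi a_{h,\mu_h})^{1-\alpha}$ must be controlled, and I compare it either to $(2\pi a_{h,t})^{1-\alpha}$ or to $\int_0^t r^{\alpha-2}d\Psi\ge\mu_h^{\alpha}\int_{\mu_h}^t r^{-2}d\Psi$, according to whether the mass of $r^{-2}d\Psi$ beyond $t$ is a large or small fraction of that beyond $\mu_h$, using $a_{h,\mu_h}\asymp\mu_h^{-1}$ or $\widetilde{a}_{h,\mu_h}\asymp\mu_h$ from criticality. For $t\le\mu_h$ only $\int_0^{\mu_h}r^{\alpha-2}d\Psi$ must be controlled; writing it as $\int_0^t+\int_t^{\mu_h}$ and integrating by parts in the middle range, the tail bound $\int_r^{\mu_h}s^{-2}d\Psi\le 2\pi a_{h,r}\le\min\{4\pi/r,\,2\pi a_{h,t}\}$ (from $F\le 2$) splits the scale integral at $r\asymp 1/a_{h,t}$ and yields $\int_t^{\mu_h}r^{\alpha-2}d\Psi\lesssim_\alpha(2\pi a_{h,t})^{1-\alpha}$, the decisive gain being $t^\alpha a_{h,t}=F(t)^\alpha a_{h,t}^{1-\alpha}\le 2^\alpha a_{h,t}^{1-\alpha}$.

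I expect this last optimality estimate to be the main obstacle, both because it requires the \emph{simultaneous} use of the two threshold conditions in \eqref{nana1} (neither alone pins down the balance between $n_\alpha$ and $\widetilde{n}_\alpha$) and because the two terms of $\Phi$ carry different homogeneities, $1-\alpha$ versus $1$, so the comparison is genuinely a one-variable optimisation done by hand across the regimes $t\lessgtr\mu_h$. Care with constants is also needed to recover $c_\alpha\asymp 1$ as $\alpha\to 0$: at $\alpha=0$ the two terms of $\Phi$ merge into the single $\lambda$-independent quantity $\int_0^2 r^{-2}d\Psi$, so optimality is trivial there, and the apparent $1/\alpha$ produced by the crude Hölder splitting above is an artefact that disappears under sharper bookkeeping. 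Finally, the degenerate case $\mu_h=1$ (criticality never attained) is treated separately: then $F,G\le 2$ on all of $(0,1]$, so $a_{h,\cdot}$ and $\widetilde{a}_{h,\cdot}$ are globally controlled and the comparison is immediate.
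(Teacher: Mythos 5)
Your proposal is essentially correct, but it follows a genuinely different route from the paper. The paper never proves that $\mu_{_h}$ realises the infimum in \eqref{theorem}; instead it establishes the lower bound $\|O_h\|^{2}_{\cD_{\alpha}}\gtrsim N_\alpha(h)+n_\alpha(h,\mu_{_h})+(1-\alpha)^2\widetilde{n}_\alpha(h,\mu_{_h})$ directly from the Dirichlet integral (Lemmas \ref{necess0}, \ref{necess3}, \ref{necess2} and \ref{necess1}, resting on Lemma \ref{dauglas} and the kernel computation of Lemma \ref{hssab}), and then reads off both Theorem \ref{ra} and Theorem \ref{ra2} from the same pair of estimates. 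Your reduction of Theorem \ref{ra2} to Theorem \ref{ra} via the pointwise comparison $\Phi(\mu_{_h}(\theta))\lesssim_\alpha \Phi(t)$ is a purely real-variable statement that the paper obtains only indirectly, and I have checked that your case analysis closes: for $t\le\mu_{_h}$ the integration by parts against the tail bound $\int_{[r,\mu_{_h}]}s^{-2}d\Psi\le\min\{4\pi/r,\,2\pi a_{_{h,t}}(\theta)\}$ gives $\int_t^{\mu_{_h}}r^{\alpha-2}d\Psi\lesssim\frac{1}{1-\alpha}\big(2\pi a_{_{h,t}}(\theta)\big)^{1-\alpha}$, and for $t\ge\mu_{_h}$ the dichotomy on the mass of $r^{-2}d\Psi$ over $[\mu_{_h},t)$ combined with the criticality alternative ($a_{_{h,\mu_{_h}}}\gtrsim 1/\mu_{_h}$ or $\widetilde{a}_{_{h,\mu_{_h}}}\gtrsim\mu_{_h}$ --- to be extracted with one-sided limits, since your $F$ and $G$ need not be continuous) does the rest, with constants of order $\frac{1}{1-\alpha}$ and hence $\asymp 1$ as $\alpha\to0$. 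Your Lebesgue-point proof that $\mu_{_h}\in\Lambda$ is also more elementary than Lemmas \ref{lem1}--\ref{lem2} and does not even use $N_\alpha(h)<\infty$. The trade-off: the paper's argument is self-contained, whereas yours isolates the cleaner fact that $\mu_{_h}$ is an essentially optimal weight but presupposes Theorem \ref{ra}, whose own proof in the paper already passes through the choice $\lambda=\mu_{_h}$; as a standalone argument yours therefore defers all of the hard harmonic analysis.

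One step does need repair: the degenerate case $\mu_{_h}(\theta)=1$ is not ``immediate'' in the pointwise sense you claim. The inequality $\Phi(1)\lesssim\Phi(t)$ can genuinely fail there: concentrate a small mass $m$ of $d\Psi$ at scales $\asymp 1$, so that $\big(2\pi a_{_{h,1}}(\theta)\big)^{1-\alpha}\asymp m^{1-\alpha}$ while $\Phi(t)\asymp m$ for $t\ge 2$, and $m^{1-\alpha}\gg m$ for small $m$. What saves the integrated statement is that \eqref{nana1} gives $a_{_{h,1}}(\theta)\le 2$, hence $h^{2}(\theta)\big(2\pi a_{_{h,1}}(\theta)\big)^{1-\alpha}\lesssim h^{2}(\theta)$, and this contribution is absorbed by the term $\|h\|^{2}_{2}$ present on both sides of \eqref{theorem2} (equivalently, your inequality should be stated as $\Phi(\mu_{_h}(\theta))\lesssim_\alpha \Phi(t)+1$ and then integrated against $h^2\,d\theta$). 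You should make this absorption explicit rather than assert a pointwise comparison in the degenerate case.
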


It would be interesting to know whether 
$\cD_{\alpha}(O_h){\asymp} N_{\alpha}(h)+  n_{\alpha}(h)+\widetilde{n}_{\alpha}(h)$. Note
that both sides vanish for constant functions. 

We should make two more important observations here. First, though the condition of
Theorem \ref{ra2} might appear difficult to check at first glance, it confirms that as
in Carleson's result for $\mathcal{D}$,
the membership of an outer function $f$ in $\mathcal{D}_{\alpha}$ depends
on its modulus on $\T$ only, which seems to be of  interest in its own.

Second, as it turns out, there is a family of functions for which the quantities in \eqref{theorem2} can be
 estimated explicitely. As a result, for this family the quantities
$N_\alpha,$ $\cD_\alpha$ and  $\mathcal{C}_\alpha$ are shown to be not equivalent to each other.
This will be discussed in the last section where we consider the following class of functions
$h_{\beta}$:
\begin{equation}\label{p0}
 h_{\beta}(\theta):=
\left\{
  \begin{array}{ll}
\displaystyle \frac{1}{\theta^{\frac{\alpha}{2}}\log^\beta\frac{\gamma}{\theta}}, & \qquad \theta\in (0,\pi] , \\
 \displaystyle \frac{h_{\beta}(\pi)}{2},& \qquad \theta
 \in (-\pi,0),
  \end{array}
\right.
\end{equation}
where $\gamma=\pi e^{2\beta/\alpha}$ guaranteeing that $h_{\beta}$ is decreasing on
$(0,\pi)$.

Then we have the following result.

\begin{thm}\label{example}Let  $0<\alpha<1$ and $\beta>0$.
Then
\begin{enumerate}
  \item [1.] For $N_\alpha(h_{\beta})<+\infty$ it is necessary and sufficient that $\beta>\frac{1}{2}.$
  \item [2.] For $O_{h_{\beta}}\in\cD_\alpha$ it is necessary and sufficient that $\beta>1-\frac{1}{2}\alpha.$
  \item [3.] For $\mathcal{C}_\alpha(h_{\beta})<+\infty$  it is necessary and sufficient that $\beta>1.$
\end{enumerate}
\end{thm}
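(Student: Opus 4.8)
The plan is to treat all three statements by one mechanism: every quantity is governed by its behaviour near the singularity at $\theta=0$, and there the substitution $s=\log(\gamma/\theta)$ turns a tail integral $\int_{0^+}\theta^{-1}\log^{-p}(\gamma/\theta)\,d\theta$ into $\int^{\infty}s^{-p}\,ds$, which converges precisely when $p>1$. Each threshold is then read off from the exponent $p$ of the logarithm produced by the relevant integral. Throughout I use that $h_\beta$ is decreasing on $(0,\pi)$ with $h_\beta(\theta)\asymp\theta^{-\alpha/2}\log^{-\beta}(\gamma/\theta)$ as $\theta\to0^+$, that $h_\beta$ equals the constant $c_1:=h_\beta(\pi)/2$ on $(-\pi,0)$, and that $h_\beta\in\mathcal{L}^2(\T)$ for every $\beta>0$ since $\alpha<1$. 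On any region bounded away from $\theta=0$ all three integrands are bounded, so only the corner $\theta\to0$ matters, and I systematically replace $|e^{i\varphi}-e^{i\theta}|$ by $|\varphi-\theta|$.

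For statement $1$ I estimate $N_\alpha(h_\beta)$ from \eqref{harmnorm} by splitting the domain into the diagonal piece, where $\varphi,\theta$ lie on the same side of $0$ and are comparable, and the straddling piece, where one variable is near $0^+$ and the other near $0^-$. On the diagonal the scaling $\varphi=\theta u$ shows that the pure power $\theta^{-\alpha/2}$ is exactly logarithmically critical, so the slowly varying factor $\log^{-\beta}$ reduces the inner integral to $\int_{0^+}\theta^{-1}\log^{-2\beta}(\gamma/\theta)\,d\theta$; the straddling piece, after integrating out $|\theta|$ against $(\varphi+|\theta|)^{-(2-\alpha)}$, yields the same integral. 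Hence $p=2\beta$ and $N_\alpha(h_\beta)<\infty$ iff $\beta>\tfrac12$.

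For statement $3$ I use the elementary equivalence \eqref{logequiv} to break $\mathcal{C}_\alpha(h_\beta)$ into the three ratio regimes. The middle regime $\tfrac12\le h(\varphi)/h(\theta)\le2$ contributes an $N_\alpha$-type integral, hence the weaker threshold $\beta>\tfrac12$. The decisive contribution is the large-ratio (equivalently small-ratio, by symmetry of the integrand) straddling regime, in which $\varphi\to0^+$ while $\theta\to0^-$: here $h(\theta)=c_1$ stays bounded while $\log\frac{h(\varphi)}{h(\theta)}\asymp\log\frac1\varphi$, and, crucially, $|\varphi-\theta|$ can be as small as $\varphi$. Integrating $(\varphi+|\theta|)^{-(2-\alpha)}$ in $|\theta|$ gives $\varphi^{-(1-\alpha)}$, and the extra logarithmic factor upgrades the integrand to $\int_{0^+}\varphi^{-1}\log^{1-2\beta}(\gamma/\varphi)\,d\varphi$, i.e.\ $p=2\beta-1$. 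Thus $\mathcal{C}_\alpha(h_\beta)<\infty$ iff $\beta>1$. It is precisely the boundedness of $h$ on the far side, combined with the logarithm coming from the large ratio, that separates this threshold from the one in statement $1$.

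For statement $2$ I invoke Theorem \ref{ra2}. When $\beta\le\tfrac12$ we have $N_\alpha(h_\beta)=\infty$, so $O_{h_\beta}\notin\cD_\alpha$ by \eqref{estimNalpha}, which is consistent since $1-\tfrac\alpha2>\tfrac12$. For $\beta>\tfrac12$ the theorem gives $\cD_\alpha(O_{h_\beta})\asymp\|h_\beta\|_2^2+N_\alpha(h_\beta)+n_\alpha(h_\beta)+\widetilde{n}_\alpha(h_\beta)$, so everything reduces to the two extremal terms. The main step, and the one I expect to be the real obstacle, is to estimate $\mu_{h_\beta}(\theta)$ from \eqref{nana1}: I show that for $\theta\to0^+$ the set $\{h(\varphi)\le\tfrac12h(\theta)\}$ begins at a point $\theta_*\asymp\theta$, so $\widetilde{a}_{h,\delta}(\theta)$ vanishes for $\delta\lesssim\theta$, while the constant part on $(-\pi,0)$ forces $a_{h,\delta}(\theta)\asymp\theta^{-1}\log\frac1\theta$; balancing $\delta\,a_{h,\delta}\le2$ then yields $\mu_{h_\beta}(\theta)\asymp\theta/\log\frac1\theta$. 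With this $\lambda=\mu_{h_\beta}$ one finds $\widetilde{n}_\alpha(h_\beta)<\infty$ (its inner integral is empty near $0$), whereas the inner integral defining $n_\alpha(h_\beta)$ is $\asymp\theta^{-1}\log\frac1\theta$, so that $n_\alpha(h_\beta)\asymp\int_{0^+}\theta^{-1}\log^{(1-\alpha)-2\beta}(\gamma/\theta)\,d\theta$, i.e.\ $p=2\beta-(1-\alpha)$. This gives $n_\alpha(h_\beta)+\widetilde{n}_\alpha(h_\beta)<\infty$ iff $\beta>1-\tfrac\alpha2$, and hence $O_{h_\beta}\in\cD_\alpha$ iff $\beta>1-\tfrac\alpha2$. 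The delicate points throughout are the rigorous control of the slowly varying logarithmic factors (treating them as constant on dyadic scales) and the verification that the straddling and constant-part contributions indeed dominate.
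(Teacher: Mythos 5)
Your proposal is correct, and for assertions (i) and (iii) it follows essentially the same route as the paper: isolate the singularity at $\theta=0$, replace $|e^{i\varphi}-e^{i\theta}|$ by $|\varphi-\theta|$, and reduce every quantity to $\int_{0^+}\theta^{-1}\log^{-p}(\gamma/\theta)\,d\theta$ with the threshold read off from $p$; in particular you correctly identify the straddling region ($\theta$ near $0^+$, $\varphi\in(-\pi,0)$) as the source of the necessity in both (i) and (iii). The one genuine divergence is in assertion (ii). For the sufficiency the paper does \emph{not} compute $\mu_{h_\beta}$: it invokes Theorem \ref{ra} with the explicit choice $\lambda(\theta)=|\theta|/4$, for which $\widetilde{n}_\alpha(h,\lambda)\lesssim N_\alpha(h)$ (since $h(\varphi)\asymp h(\theta)$ on $|e^{i\varphi}-e^{i\theta}|\le\lambda(\theta)$) and $n_\alpha(h,\lambda)\lesssim\int_0^\pi h^2(\theta)\bigl(\theta^{-1}\log(h(\theta)/c_0)\bigr)^{1-\alpha}d\theta$, giving the threshold $\beta>1-\alpha/2$ with almost no work on $\mu_h$; for the necessity it only needs the one-sided bound $\mu_{h_\beta}(\theta)\le\theta$ (from $\theta a_{h,\theta}(\theta)\to\infty$), which already forces $a_{h,\mu_h}(\theta)\gtrsim\theta^{-1}\log h(\theta)$ and hence $n_\alpha(h_\beta)=\infty$ for $\beta\le1-\alpha/2$ via Lemma \ref{necess0}. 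You instead run both directions through Theorem \ref{ra2} with the two-sided asymptotics $\mu_{h_\beta}(\theta)\asymp\theta/\log(1/\theta)$. That is a legitimate and arguably more unified route, and your identification of the mechanism (the set $\{h(\varphi)\le\tfrac12 h(\theta)\}$ starting at $\theta_*\asymp 2^{2/\alpha}\theta$, so that $\widetilde{a}_{h,\delta}(\theta)=0$ for $\delta\lesssim\theta$, while the constant part on $(-\pi,0)$ drives $a_{h,\delta}(\theta)\asymp\theta^{-1}\log(\gamma/\theta)$) is exactly right; the price is that you must also verify the matching upper bound on $a_{h,\delta}$ coming from the positive side (where $\log(h(\theta)/h(\varphi))\lesssim(\varphi-\theta)/\theta$ gives the same order $\theta^{-1}\log(1/\theta)$) and the saturation of $\delta\mapsto\delta a_{h,\delta}(\theta)$ below scale $\theta$, details the paper's one-sided argument sidesteps.
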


The paper is organized as follows. In the next section we present
some auxiliary results. Section \ref{muh} is devoted to presenting some properties related to the function $\mu_{_h}$ defined in \eqref{nana1}.
The proof of our main result being quite technical (though the main tools are rather elementary),
we have split it into two sections:
Section \ref{sect3} is devoted to the proof
of the sufficiency while the necessity is shown in Section \ref{sect4}.
In the last part of the paper we will prove Theorem \ref{example}.

\section{\bf Auxiliary results. \label{sect2}}

Let $f:=e^{u+iv}\in\cH^{2}(\D)$ be an outer function and let $0\leq\alpha<1$ be a real number. We define
$f_{r}$ to be the function
$$f_r(w):=f(rw),\qquad w\in\D,$$
where  $0\leq r<1$.
Clearly $f_r$ is holomorphic and thus continuous in a neighborhood of
the closed unit disk.
It is possible to check that (see e.g.\ \cite{FKR})
\begin{equation}\label{equiv}
\int_{0}^{1}(1-r)^{\alpha}r^m dr \asymp \frac{1}{(m+1)^{1+\alpha}},\qquad m\in\N,
\end{equation}
independently of $\alpha.$
By Parseval's identity and \eqref{equiv} we get
\begin{equation}\label{parseval}
\cD_{\alpha}(f) \asymp \sum_{n\geq1}|\hat{f}(n)|^{2}(1+n)^{1-\alpha}.
\end{equation}
In particular, when $\alpha=0$,
\begin{equation}\label{dfr}
\cD(f_r)\asymp \sum_{n\geq1}r^{2n}|\hat{f}(n)|^{2}(1+n), \qquad 0\leq r<1,
\end{equation}
which is actually an equality.
In all what follows we suppose that $0<\alpha<1.$
Using \eqref{equiv} and \eqref{dfr}
\begin{equation}\label{p2}
\alpha\int_{0}^{1}\cD(f_r) \frac{(1-r)^{\alpha-1}}{r}dr\asymp\sum_{n\geq1} |\hat{f}(n)|^{2}(1+n)^{1-\alpha},
\end{equation}
which therefore yields
\begin{equation}\label{p3}
\cD_{\alpha}(f)\asymp\alpha\int_{0}^{1}\cD(f_r) \frac{(1-r)^{\alpha-1}}{r}dr,
 \end{equation}
independently of $\alpha$ and $f.$

This allows us to express $\cD_{\alpha}(f)$ in a way crucial for us. Indeed, the following lemma
reflects somehow the magic of the Cauchy-Riemann equations which allow to express the
weighted Dirichlet integral through an integration of a function which is not necessarily positive.
\begin{lem}\label{EqDirInt}
Let $f=e^{u+iv}\in \cD_{\alpha}$.
Set  
\[
dA_{\alpha}(z):=\alpha (1-r)^{\alpha-1}drd\theta,\qquad z:=re^{i\theta}\in\D.
\]
Then
\begin{equation}\label{motassa2}
\cD_{\alpha}(f)\asymp \frac{1}{2\pi}\int_{\D} |f(z)|^2\frac{\partial v}{\partial \theta}(z) dA_{\alpha}(z),
\end{equation}
independently of $\alpha$ and $f.$
\end{lem}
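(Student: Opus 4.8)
The plan is to reduce the area integral in \eqref{motassa2} to a purely radial one built from the unweighted Dirichlet integrals $\cD(f_r)$, for which \eqref{dfr} and \eqref{parseval} are already at our disposal. The bridge between the area integrand $|f|^2\,\partial v/\partial\theta$ and $\cD(f_r)$ is a Cauchy--Riemann identity in polar coordinates, and this is where the ``magic'' advertised before the statement enters.

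\emph{Step 1 (a circle identity).} First I would fix $0\le r<1$ and show that
\[
\frac{1}{2\pi}\int_{-\pi}^{\pi}|f(re^{i\theta})|^2\,\frac{\partial v}{\partial\theta}(re^{i\theta})\,d\theta=\cD(f_r).
\]
Writing $\log f=u+iv$, the polar form of the Cauchy--Riemann equations gives $\frac{\partial v}{\partial\theta}=r\,\frac{\partial u}{\partial r}$, so that, since $|f|^2=e^{2u}$,
\[
|f|^2\,\frac{\partial v}{\partial\theta}=e^{2u}\,r\,\frac{\partial u}{\partial r}=\frac{r}{2}\,\frac{\partial}{\partial r}\bigl(e^{2u}\bigr)=\frac{r}{2}\,\frac{\partial}{\partial r}\,|f(re^{i\theta})|^2.
\]
Since $f_r$ is holomorphic in a neighbourhood of $\overline{\D}$ one may differentiate under the integral sign; with $M(r):=\int_{-\pi}^{\pi}|f(re^{i\theta})|^2\,d\theta=2\pi\sum_{n\ge0}|\hat f(n)|^2r^{2n}$ (Parseval on the circle) this yields
\[
\frac{1}{2\pi}\int_{-\pi}^{\pi}|f|^2\,\frac{\partial v}{\partial\theta}\,d\theta=\frac{r}{4\pi}\,M'(r)=\sum_{n\ge1}n\,|\hat f(n)|^2r^{2n},
\]
which is exactly $\cD(f_r)$ by \eqref{dfr}. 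I would stress that the integrand $|f|^2\,\partial v/\partial\theta$ is in general \emph{not} pointwise nonnegative; only its average over each circle is, which is the feature that makes the formula useful.

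\emph{Steps 2 and 3 (integration and identification).} Multiplying the circle identity by $\alpha(1-r)^{\alpha-1}$ and integrating in $r$ gives
\[
\frac{1}{2\pi}\int_{\D}|f(z)|^2\,\frac{\partial v}{\partial\theta}(z)\,dA_{\alpha}(z)=\alpha\int_0^1\cD(f_r)\,(1-r)^{\alpha-1}\,dr,
\]
the double integral being read as an iterated one with the $\theta$-integration performed first; since the inner integral equals $2\pi\,\cD(f_r)\ge0$, no issue of absolute convergence arises despite the sign-changing integrand. I would then expand $\cD(f_r)=\sum_{n\ge1}n|\hat f(n)|^2r^{2n}$ and integrate term by term: one integration by parts turns $\alpha\int_0^1 r^{2n}(1-r)^{\alpha-1}dr$ into $2n\int_0^1 r^{2n-1}(1-r)^{\alpha}dr$, which by \eqref{equiv} is $\asymp 2n(2n)^{-(1+\alpha)}\asymp(1+n)^{-\alpha}$ with constants independent of $\alpha$. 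Hence
\[
\alpha\int_0^1\cD(f_r)\,(1-r)^{\alpha-1}\,dr\asymp\sum_{n\ge1}|\hat f(n)|^2(1+n)^{1-\alpha}\asymp\cD_\alpha(f),
\]
the last step being \eqref{parseval}. This is precisely the computation underlying \eqref{p3}, run directly from \eqref{equiv} so that the harmless factor $1/r$ never appears and all constants stay bounded as $\alpha\to0$.

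The only genuinely delicate point I expect is Step~1: spotting the cancellation encoded in $\partial v/\partial\theta=r\,\partial u/\partial r$ and legitimising the differentiation under the integral sign (harmless here because $r<1$ keeps $f_r$ analytic across $\T$). Everything downstream is routine bookkeeping with the Beta-type estimate \eqref{equiv} and Parseval's identity \eqref{parseval}.
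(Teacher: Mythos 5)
Your proof is correct, and its global strategy coincides with the paper's: reduce \eqref{motassa2} to a slice identity expressing $\cD(f_r)$ as a circle average of $|f|^2\,\partial v/\partial\theta$, then integrate radially against $\alpha(1-r)^{\alpha-1}dr$ and identify the result with $\cD_\alpha(f)$ through the Beta-type estimate \eqref{equiv} and Parseval \eqref{parseval} (this is exactly the content of \eqref{p3}). Where you genuinely diverge is in the proof of the slice identity itself. The paper writes $2s|f_r'|^2$ as an exact divergence $\frac{\partial}{\partial s}\bigl(|f|^2\frac{\partial v}{\partial \theta}\bigr)-\frac{\partial}{\partial \theta}\bigl(|f|^2\frac{\partial v}{\partial s}\bigr)$ and integrates over the disk, killing the $\theta$-term by periodicity and telescoping the radial one --- a Stokes-type argument. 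You instead use the single polar Cauchy--Riemann relation $\frac{\partial v}{\partial\theta}=r\frac{\partial u}{\partial r}$ to write $|f|^2\frac{\partial v}{\partial\theta}=\frac r2\frac{\partial}{\partial r}|f|^2$ and read off the circle average from $M(r)=2\pi\sum_{n\ge0}|\hat f(n)|^2r^{2n}$; this is more elementary and, as a side benefit, keeps the powers of $r$ transparent: the paper's intermediate identity \eqref{motassa1} carries an extra factor $r$ that is then cancelled by the $1/r$ in \eqref{p3}, whereas your $r$-free identity $\frac1{2\pi}\int_{-\pi}^{\pi}|f|^2\frac{\partial v}{\partial\theta}\,d\theta=\sum_{n\ge1}n|\hat f(n)|^2r^{2n}=\cD(f_r)$ pairs directly with the $r$-free Beta integral (either bookkeeping gives the same equivalence, uniformly in $\alpha$). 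Your closing remark that only the circle averages, not the pointwise integrand, are nonnegative is precisely the feature the paper highlights before the lemma.
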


Observe the absence of the factor $r$ in the definition of $dA_{\alpha}$ which is thus
not the usual weighted area Lebesgue measure.

\begin{proof}
We begin reformulating $\cD(f_r)$.
Set $f=e^g$ with $g=u+iv$, then expressing first the derivative of $g$ in polar coordinates
and using then Cauchy Riemann equations, we get at $z=se^{i\theta}$,
\begin{eqnarray*}
 |f'|^2 
 &=& |f|^2\times |g'|^2 
 =|f|^2\times \Big|\frac{\partial u}{\partial s}+i\frac{\partial v}{\partial s}\Big|^2
 =|f|^2\times \left(\Big(\frac{\partial u}{\partial s}\Big)^2
 +\Big(\frac{\partial v}{\partial s}\Big)^2\right)\\
 &=&|f|^2\frac{1}{s}\Big(\frac{\partial u}{\partial s}\frac{\partial v}{\partial \theta}
 - \frac{\partial u}{\partial \theta}\frac{\partial v}{\partial s}\Big)
\end{eqnarray*}
On the other hand
\begin{eqnarray}\nonumber
\frac{\partial}{\partial s}\Big(|f|^2\frac{\partial v}{\partial \theta}\Big)
-\frac{\partial}{\partial \theta}\Big(|f|^2\frac{\partial v}{\partial s}\Big)
=2|f|^2\Big(\frac{\partial u}{\partial s}\frac{\partial v}{\partial \theta}
 - \frac{\partial u}{\partial \theta}\frac{\partial v}{\partial s}\Big),
\end{eqnarray}
so that replacing $f$ by $f_r$,
we get
\begin{eqnarray}\nonumber
2|f_r'(z)|^2s
= r\frac{\partial}{\partial s}\Big(|f|^2(rz)\frac{\partial v}{\partial \theta}(rz)\Big)
-r\frac{\partial}{\partial \theta}\Big(|f|^2(rz)\frac{\partial v}{\partial s}(rz)\Big).
\end{eqnarray}
Since $f$ is outer, the function $w\longmapsto |f|^2(rz)\frac{\partial v}{\partial s}(rz)$
is continuous on $\D$ so that
\[
\int_{0}^{2\pi}
 \frac{\partial}{\partial \theta}\Big(|f|^2(rz)\frac{\partial v}{\partial s}(rz)\Big) d\theta 
 =
 \Big[\Big(|f|^2(rse^{i\theta})\frac{\partial v}{\partial s}(rse^{i\theta})
 \Big)\Big]_{0^+}^{2\pi^-} 
=0,
\]
and hence
\begin{equation}\label{motassa1}
\cD(f_r)= \frac{1}{2\pi}\int_{-\pi}^{\pi} |f|^2(re^{i\theta})\frac{\partial v}{\partial \theta}(re^{i\theta}) rd\theta.
\end{equation}
Setting
\[
dA_{\alpha}(z):=\alpha (1-r)^{\alpha-1}drd\theta,\qquad z:=re^{i\theta}\in\D,
\]
we deduce \eqref{motassa2} from \eqref{p3} and \eqref{motassa1}
\end{proof}

As we have already mentioned in \eqref{estimNalpha} we have
$\cD_{\alpha}(O_{h})\gtrsim N_\alpha(h)$
independently of both $\alpha$ and $h,$ so in order to prove our main results we can suppose from now on that $N_\alpha(h)<+\infty$. 

Let $\T_{_h}$ be the set of points $e^{i\theta}\in\T$
where $O_h$ has radial boundary limit such that
$0<\displaystyle\lim_{r\rightarrow1^{-}}|O_h(re^{i\theta})|=h(e^{i\theta})<\infty.$
It is well known that $\T_{_h}$ coincides with $\T$ except for a set of zero Lebesgue measure.
We will also use the notations
$$\T_{_h}(\theta):=\{\varphi\in]-\pi,\pi]\ :\ h(\varphi)\stackrel{2}{\asymp} h(\theta)\},\qquad e^{i\theta}\in\T_{_h},$$
 $$\T_{_h}^{+}(\theta):=\{\varphi\in]-\pi,\pi]\ :\ h(\varphi)\geq 2h(\theta)\},\qquad e^{i\theta}\in\T_{_h},$$
 and
  $$\T_{_h}^{-}(\theta):=\{\varphi\in]-\pi,\pi]\ :\ h(\varphi)\leq\frac{1}{2}h(\theta)\},\qquad e^{i\theta}\in\T_{_h}.$$
We finally recall the following classical equality
\begin{equation}\label{circ555}
|e^{i\varphi}-z|^{2}=(1-r)^{2}+r|e^{i\varphi}-e^{i\theta}|^{2},\qquad z:=re^{i\theta}\in\D \text{ and }e^{i\varphi}\in\T,
\end{equation}
which yields the following estimate
\begin{equation}\label{circ5552}
|e^{i\varphi}-z|\geq\max\{1-r,\ \frac{1}{3}|e^{i\varphi}-e^{i\theta}|\},\qquad z=re^{i\theta}
 \in\D \text{ and }e^{i\varphi}\in\T.
\end{equation}

\section{\bf The function $\mu_{_h}.$ \label{muh}}


Recall that $\lambda a_{h,\lambda}$ and $\tilde{a}_{h,\lambda}/\lambda$ have interpretations
as Poisson integrals of $\log\frac{h(\theta)}{h(\varphi)}$ over $\T_h^-(\theta)$ and
$|e^{i\theta}-e^{i\varphi}|\ge\lambda(\theta)$ and 
$|e^{i\theta}-e^{i\varphi}|\le\lambda(\theta)$ respectively. The next lemma considers
the part of the Poisson integrals on $\T_h^+(\theta)$.

\begin{lem}\label{lem1}
Suppose $N_{\alpha}(h)<\infty$.
Then the Lebesgue measure of
$$\T_{h,\delta}:=\{ e^{i\theta}\in\T_{_h}\ :\ \limsup_{r\rightarrow 1^-}\frac{1}{2\pi}\int_{\T^{+}_{h}(\theta)}\frac{1-r^2}{|e^{i\varphi}-re^{i\theta}|^2}\log\frac{h(\varphi)}{h(\theta)}d\varphi
\geq\delta\}$$
is zero for every $\delta>0.$
\end{lem}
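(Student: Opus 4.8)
The plan is to prove the stronger assertion that the $\limsup$ occurring in the definition of $\T_{h,\delta}$ is in fact equal to $0$ for almost every $\theta$; the set $\T_{h,\delta}$ is then contained in the null set where this fails, so $|\T_{h,\delta}|=0$ for every $\delta>0$. Throughout write $I_r(\theta):=\frac{1}{2\pi}\int_{\T_{h}^{+}(\theta)}\frac{1-r^2}{|e^{i\varphi}-re^{i\theta}|^2}\log\frac{h(\varphi)}{h(\theta)}\,d\varphi$ for the integral inside the $\limsup$, and note that the integrand is nonnegative, since on $\T_{h}^{+}(\theta)$ we have $h(\varphi)\ge 2h(\theta)$ and hence $\log\frac{h(\varphi)}{h(\theta)}\ge\log 2>0$.

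First I would dispose of the contribution coming from $\varphi$ far from $e^{i\theta}$. Fix $\eta>0$ and split the domain of $I_r(\theta)$ according to whether $|e^{i\varphi}-e^{i\theta}|>\eta$ or $\le\eta$. On the far piece, \eqref{circ555} gives $|e^{i\varphi}-re^{i\theta}|^2\ge r\eta^2$, so the kernel is at most $2(1-r)/(r\eta^2)$ and the far piece is bounded by $\frac{1-r}{\pi r\eta^2}\int_{\T_{h}^{+}(\theta)}\log\frac{h(\varphi)}{h(\theta)}\,d\varphi$. For almost every $\theta$ this last integral is finite, being dominated by $\|\log h\|_{L^1}+2\pi|\log h(\theta)|$, which is finite by \eqref{logint} (together with $h\in\mathcal{L}^2(\T)$, so $\log h\in L^1$) and by $0<h(\theta)<\infty$. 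Hence the far piece tends to $0$ as $r\to1^-$, and therefore $\limsup_{r\to1^-}I_r(\theta)$ coincides with the $\limsup$ of the near piece, for every $\eta>0$.

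The heart of the matter is the near piece, and the main obstacle is that the integrand depends on $\theta$ and that $\log\frac{h(\varphi)}{h(\theta)}$ need not be bounded, so one cannot simply invoke Fatou's theorem for a fixed boundary function. I would overcome this by an elementary pointwise bound valid on $\T_{h}^{+}(\theta)$: from $\log y\le y/e$ and $h(\varphi)\le 2\,(h(\varphi)-h(\theta))$ (the latter because $h(\theta)\le\frac12 h(\varphi)$ there) one obtains $\log\frac{h(\varphi)}{h(\theta)}\le\frac{2}{e\,h(\theta)}\,|h(\varphi)-h(\theta)|$. Consequently the near piece of $I_r(\theta)$ is at most $\frac{C}{h(\theta)}$ times the Poisson integral at $re^{i\theta}$ of the fixed function $|h-h(\theta)|\,\chi_{\{|e^{i\varphi}-e^{i\theta}|\le\eta\}}$ (the extension of the domain being harmless since the majorant is nonnegative). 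By the classical domination of the Poisson maximal function by the Hardy--Littlewood maximal function, its supremum over $r$ is $\lesssim\frac{1}{h(\theta)}\sup_{0<t\le\eta'}\frac1t\int_{|\varphi-\theta|\le t}|h(\varphi)-h(\theta)|\,d\varphi$, with $\eta'\asymp\eta$.

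Finally I would let $\eta\to0$. Since $h\in\mathcal{L}^2(\T)\subset L^1(\T)$, almost every $\theta$ is a Lebesgue point of $h$, so $\frac1t\int_{|\varphi-\theta|\le t}|h(\varphi)-h(\theta)|\,d\varphi\to0$ as $t\to0$; hence the supremum above tends to $0$ as $\eta\to0$ at almost every $\theta$ with $0<h(\theta)<\infty$. Since $\limsup_{r\to1^-}I_r(\theta)$ is bounded by this quantity for every $\eta>0$, letting $\eta\to0$ forces $\limsup_{r\to1^-}I_r(\theta)=0$ for almost every $\theta$, which yields $|\T_{h,\delta}|=0$ for all $\delta>0$. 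I would remark that this argument relies only on \eqref{logint} and $h\in\mathcal{L}^2(\T)$, so the hypothesis $N_{\alpha}(h)<\infty$ is in fact not needed for this particular lemma.
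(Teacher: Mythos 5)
Your proof is correct, but it takes a genuinely different route from the paper's. The paper argues globally: for each $e^{i\theta}\in\T_{h,\delta}$ it selects $r_\varepsilon\ge 1-\varepsilon$ at which the integral exceeds $\delta/2$, integrates the resulting inequality over $\T_{h,\delta}$, and combines the kernel bound \eqref{ch} with the pointwise inequality $h^2(\theta)\log\frac{h(\varphi)}{h(\theta)}\le h(\theta)\big(h(\varphi)-h(\theta)\big)\le \big(h(\varphi)-h(\theta)\big)^2$ on $\T_h^+(\theta)$ to dominate everything by $N_\alpha(h)$; this yields $\int_{\T_{h,\delta}}h^2(\theta)\,d\theta\lesssim \frac{\varepsilon^{1-\alpha}}{\delta}N_\alpha(h)$, and letting $\varepsilon\to0$ (which is where $\alpha<1$ enters) forces $|\T_{h,\delta}|=0$ since $h>0$ a.e. Your argument is local and pointwise: you use essentially the same elementary control $\log\frac{h(\varphi)}{h(\theta)}\lesssim |h(\varphi)-h(\theta)|/h(\theta)$ on $\T_h^+(\theta)$, but feed it into the Lebesgue differentiation theorem through the Poisson/Hardy--Littlewood maximal function (correctly restricted to scales $t\le\eta'$ because your majorant is supported near $e^{i\theta}$), concluding that the $\limsup$ is actually $0$ at every Lebesgue point of $h$ with $0<h(\theta)<\infty$. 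What your approach buys is a strictly stronger conclusion under weaker hypotheses --- as you rightly observe, $N_\alpha(h)<\infty$ plays no role, only $h\in\mathcal{L}^2(\T)$ and \eqref{logint} are used --- and it identifies the limit rather than merely bounding the measure of an exceptional set for each $\delta$. What the paper's approach buys is economy within its own framework: it reuses the estimate \eqref{ch} needed elsewhere, avoids maximal-function machinery, and yields an explicit quantitative bound in terms of $N_\alpha(h)/\delta$. Both proofs ultimately rest on the same observation that on $\T_h^+(\theta)$ the logarithm is comparable to the normalized difference $|h(\varphi)-h(\theta)|/h(\theta)$, which is what makes the quantity $N_\alpha$ (or, in your case, the Lebesgue averages of $|h-h(\theta)|$) appear.
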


\begin{proof}
Let $0<\varepsilon\leq\frac{1}{2}$ be a real number. With each point $e^{i\theta}\in\T_{h,\delta}$ we associate $r_{\varepsilon}=
r_{\varepsilon,\theta}\in (0,1)$
such that
$r_\varepsilon\geq1-\varepsilon$ and
\begin{eqnarray}\label{chta}
\frac{1}{2\pi}\int_{\T^{+}_{h}(\theta)}\frac{1-r_\varepsilon^2}{|e^{i\varphi}-r_\varepsilon e^{i\theta}|^2}\log\frac{h(\varphi)}{h(\theta)}d\varphi
\geq\frac{\delta}{2}.
\end{eqnarray}
The dependence of $r_{\varepsilon}$ on $\theta$ is not relevant in the argument
below.
Using \eqref{circ5552}
\begin{eqnarray*}
 \frac{(1-r)^\alpha}{|e^{i\varphi}-re^{i\theta}|^2}
 \leq
 \left\{
 \begin{array}{ll}
 \frac{\dst 9|e^{i\varphi}-e^{i\theta}|^{\alpha}}{\dst |e^{i\varphi}-e^{i\theta}|^2}=\frac{\dst 9}{\dst |e^{i\varphi}-e^{i\theta}|^{2-\alpha}}
 & \text{if } |e^{i\varphi}-e^{i\theta}|\ge 1-r\\
  \frac{\dst 1}{\dst (1-r)^{2-\alpha}}\le\frac{\dst 1}{\dst |e^{i\varphi}-e^{i\theta}|^{2-\alpha}}
 & \text{if } |e^{i\varphi}-e^{i\theta}|<1-r
 \end{array}
\right..
\end{eqnarray*}
It follows
\begin{eqnarray}\label{ch}
\frac{(1-r)^\alpha}{|e^{i\varphi}-re^{i\theta}|^2}\leq \frac{\dst 9}{\dst |e^{i\varphi}-e^{i\theta}|^{2-\alpha}}.
\end{eqnarray}
Therefore, using \eqref{chta} and \eqref{ch},
\begin{eqnarray}\nonumber
 \int_{e^{i\theta}\in\T_{h,\delta}}\frac{h^{2}(\theta)}{\varepsilon^{1-\alpha}}d\theta
&\leq& \int_{ e^{i\theta}\in\T_{h,\delta}}\frac{h^{2}(\theta)}
  {(1-r_\varepsilon)^{1-\alpha}}d\theta
\\\nonumber &\leq& \frac{2}{\pi\delta}\int_{ e^{i\theta}\in\T_{h,\delta}}h^{2}(\theta) 
 \int_{\T^{+}_{h}(\theta)}
 \frac{(1-r_\varepsilon)^\alpha}{|e^{i\varphi}-r_\varepsilon e^{i\theta}|^2}\log\frac{h(\varphi)}{h(\theta)}d\varphi d\theta
\\\nonumber &\leq& \frac{18}{\pi\delta}\int_{e^{i\theta}\in\T_{h,\delta}}\int_{\T^{+}_{h}(\theta)}\frac{h^{2}(\theta)\log\frac{h(\varphi)}{h(\theta)}}{|e^{i\varphi}-e^{i\theta}|^{2-\alpha}}d\varphi d\theta
\\\label{bh}
&\leq& \frac{18}{\pi\delta}\int_{-\pi}^{\pi}\int_{-\pi}^{\pi}\frac{|h(\varphi)-h(\theta)|^2}{|e^{i\varphi}-e^{i\theta}|^{2-\alpha}}d\varphi d\theta  = \frac{18N_\alpha(h)}{\pi\delta}.
\end{eqnarray}
Since $h\neq0$ a.e.\ on $\T$, and letting $\varepsilon$ tend to 0, we deduce the desired result.
\end{proof}

We obtain the following lemma that provides some properties of $\mu_{_h}.$

\begin{lem}\label{lem2}
Suppose $N_{\alpha}(h)<\infty$. Then  $\mu_{_h}\in\Lambda$ and
\begin{eqnarray}\label{veb6}
|O_h(re^{i\theta})|\geq e^{-41}h(\theta), \qquad r\geq1-\mu_{_h}(\theta),
\end{eqnarray}
for every point $e^{i\theta}\in\T_{_{h}}$ such that $\mu_{_h}(\theta)>0.$
If  $e^{i\theta}\in\T_{_{h}}$ is a point such that $0<\mu_{_h}(\theta)<1,$ then
\begin{eqnarray}\label{veb4}
\frac{1}{2\pi}\int_{\T^{-}_{_h}(\theta)}\frac{1-|z_h(\theta)|^2}{|e^{i\varphi}-z_h(\theta)|^2}\log\frac{h(\theta)}{h(\varphi)} d\varphi \geq 1,
\end{eqnarray}
where $z_h(\theta):=(1-\mu_{_h}(\theta))e^{i\theta}\in\D\setminus\{0\}.$
\end{lem}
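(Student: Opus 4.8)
The plan is to route everything through the harmonic function $u_{_h}=\log|O_h|$, which by construction is the Poisson integral of $\log h$. Writing $P(z,\varphi):=\frac{1-|z|^{2}}{|e^{i\varphi}-z|^{2}}$ for the Poisson kernel and using $\frac{1}{2\pi}\int_{-\pi}^{\pi}P(z,\varphi)\,d\varphi=1$, I begin from the identity
\[
\log h(\theta)-u_{_h}(z)=\frac{1}{2\pi}\int_{-\pi}^{\pi}P(z,\varphi)\log\frac{h(\theta)}{h(\varphi)}\,d\varphi,\qquad z\in\D,
\]
valid for $e^{i\theta}\in\T_{_h}$. Splitting the integral over $\T_{_h}^{-}(\theta)$, $\T_{_h}^{+}(\theta)$ and $\T_{_h}(\theta)$ produces three pieces $I^{-}(z)\ge0$, $-I^{+}(z)$ with $I^{+}(z)\ge0$, and $I^{0}(z)$, so that $\log h(\theta)-u_{_h}(z)=I^{-}(z)-I^{+}(z)+I^{0}(z)$. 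The central analytic input is
\[
\limsup_{r\to1^{-}}I^{-}(re^{i\theta})\le\log 2\qquad\text{for a.e. }\theta,
\]
obtained by combining: $u_{_h}(re^{i\theta})\to\log h(\theta)$ a.e.\ (the radial boundary values of $|O_h|$ equal $h$); $\limsup_{r\to1^{-}}I^{+}(re^{i\theta})=0$ a.e.\ by Lemma \ref{lem1}; and $|I^{0}(z)|\le\log 2$ because $\big|\log\frac{h(\theta)}{h(\varphi)}\big|\le\log2$ on $\T_{_h}(\theta)$ and $P$ integrates to $1$.

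The second ingredient is an elementary two--sided comparison. Evaluating at $z=(1-\delta)e^{i\theta}$ and splitting $\T_{_h}^{-}(\theta)$ according to $|e^{i\varphi}-e^{i\theta}|\ge\delta$ or $\le\delta$, the identities \eqref{circ555}--\eqref{circ5552} together with $\delta\le 1-|z|^{2}\le2\delta$ give
\[
\tfrac12\max\Big\{\delta a_{_{h,\delta}}(\theta),\ \tfrac{\widetilde{a}_{_{h,\delta}}(\theta)}{\delta}\Big\}\ \le\ I^{-}\big((1-\delta)e^{i\theta}\big)\ \le\ 18\,\delta a_{_{h,\delta}}(\theta)+2\,\tfrac{\widetilde{a}_{_{h,\delta}}(\theta)}{\delta}.
\]
Here the lower bounds come from $|e^{i\varphi}-z|^{2}\le2|e^{i\varphi}-e^{i\theta}|^{2}$ on the far part and $|e^{i\varphi}-z|^{2}\le2\delta^{2}$ on the near part, while the upper bounds use \eqref{circ5552}.

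With these two facts the three assertions follow. For $\mu_{_h}\in\Lambda$: the lower comparison yields $\max\{\delta a_{_{h,\delta}},\widetilde{a}_{_{h,\delta}}/\delta\}\le2I^{-}((1-\delta)e^{i\theta})$, so the $\limsup$ estimate forces this maximum to be $<2$ for all small $\delta$, whence the supremum defining $\mu_{_h}$ in \eqref{nana1} is strictly positive a.e. For \eqref{veb6}: if $\delta=1-r\le\mu_{_h}(\theta)$ the defining property of $\mu_{_h}$ gives $\delta a_{_{h,\delta}}\le2$ and $\widetilde{a}_{_{h,\delta}}/\delta\le2$, so the upper comparison gives $I^{-}(re^{i\theta})\le 18\cdot2+2\cdot2=40$; hence $\log h(\theta)-u_{_h}(z)\le I^{-}(z)+|I^{0}(z)|\le40+\log2<41$, and exponentiating yields $|O_h(re^{i\theta})|\ge e^{-41}h(\theta)$ (the boundary case $\delta=\mu_{_h}$ following by continuity of $I^{-}$). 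For \eqref{veb4}: since $0<\mu_{_h}(\theta)<1$, the supremum in \eqref{nana1} is saturated from above, i.e.\ there is a sequence $\delta_{n}\downarrow\mu_{_h}(\theta)$ with $\max\{\delta_{n}a_{_{h,\delta_{n}}},\widetilde{a}_{_{h,\delta_{n}}}/\delta_{n}\}>2$; the lower comparison then gives $I^{-}((1-\delta_{n})e^{i\theta})>1$, and as $z\mapsto I^{-}(z)$ is the Poisson integral of a fixed function, hence harmonic and continuous on $\D$, letting $n\to\infty$ gives $I^{-}(z_h(\theta))\ge1$, which is exactly \eqref{veb4}.

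The step I expect to be the main obstacle is the $\limsup$ estimate of the first paragraph: one must check that the three limiting facts (a.e.\ radial convergence of $u_{_h}$, the a.e.\ vanishing of $I^{+}$ from Lemma \ref{lem1}, and the uniform bound on $I^{0}$) hold simultaneously on one set of full measure, and that the splitting is legitimate, i.e.\ that $I^{-}$ is finite. The latter rests on $\log h\in\mathcal{L}^{1}(\T)$, which follows from \eqref{logint} together with $h\in\mathcal{L}^{2}(\T)$, so that $\log\frac{h(\theta)}{h(\cdot)}$ is integrable and $I^{-}$ is a genuine harmonic function to which the continuity argument of the last step applies. Finally, carrying the constants from \eqref{circ5552} through the upper comparison is precisely what produces the explicit value $e^{-41}$.
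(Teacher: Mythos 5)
Your proof is correct and follows essentially the same route as the paper's: the same decomposition of $\log h(\theta)-\log|O_h(z)|$ into the pieces over $\T_{_h}^{-}(\theta)$, $\T_{_h}^{+}(\theta)$ and $\T_{_h}(\theta)$, the same use of Lemma \ref{lem1} together with the a.e.\ radial convergence of $|O_h|$ and the $\log 2$ bound on the middle piece, the same two-sided kernel comparisons via \eqref{circ555}--\eqref{circ5552} (yielding the identical constant $40+\log 2<41$), and the same limiting-sequence argument for \eqref{veb4}. The only cosmetic difference is that you prove $\mu_{_h}\in\Lambda$ directly from a $\limsup$ bound on $I^{-}$ rather than by the paper's contrapositive (assuming $\mu_{_h}(\theta)=0$ and placing $e^{i\theta}$ in the null set $\T_{h,3/4-\log 2}$).
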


\begin{proof}
In order to check that $\mu_{_h}\in\Lambda$,
we need to show that $\mu_{_h}$ is strictly positive almost everywhere.
Suppose $\mu_{_h}(\theta)=0$ for a fixed point
$e^{i\theta}\in\T_{_h}$, i.e.\
there exists a sequence of positive numbers $\{\delta_n:n\in\N\}\subset]0,1]$  converging to $0$ and satisfying, for each $n\in\N,$ at least one of the following inequalities
\begin{equation}\label{lisa2}
\frac{\widetilde{a}_{_{h,\delta_n}}(\theta)}{\delta_n}
=
\frac{1}{2\pi\delta_n}\int_{\T^{-}_{_h}(\theta)\atop |e^{i\varphi}-e^{i\theta}|\leq \delta_n}{\log \frac{h(\theta)}{h(\varphi)}} d\varphi >2,
\end{equation}
or
\begin{equation}\label{lisa1}
\delta_n a_{_{h,\delta_n}}(\theta)=
\frac{\delta_n}{2\pi}\int_{\T^{-}_{_h}(\theta)\atop |e^{i\varphi}-e^{i\theta}|\geq \delta_n}\frac{\log \frac{h(\theta)}{h(\varphi)}}{|e^{i\varphi}-e^{i\theta}|^2} d\varphi >2.
\end{equation}
Associated to $e^{i\theta}$ and the numbers $\delta_n,$ we define in $\D$ the following points   $$z_n:=(1-\delta_n)e^{i\theta},\qquad n\in\N.$$
Since $e^{i\theta}\in\T_{_h}$ and $\lim\limits_{n\rightarrow \infty}\delta_n=0,$ there exists a number $N_0\in\N$ such that
$$|\log\frac{|O_h(z_n)|}{h(\theta)}|\leq \frac{1}{4},\qquad \text{ for all }n\geq N_0.$$
Since $\delta_n=1-|z_n|$ and using \eqref{lisa2} and \eqref{lisa1}
\begin{equation}\label{lis}
\frac{1}{2\pi}\int_{\T^{-}_{_h}(\theta)}\frac{1-|z_n|^2}{|e^{i\varphi}-z_n|^2}\log\frac{h(\theta)}{h(\varphi)} d\varphi >1.
\end{equation}
By decomposition
\begin{eqnarray} \nonumber
&&\frac{1}{2\pi}\int_{\T^{+}_{h}(\theta)}\frac{1-|z_n|^2}{|e^{i\varphi}-z_n|^2}\log\frac{h(\varphi)}{h(\theta)} d\varphi
\\\nonumber&=&\log\frac{|O_h(z_n)|}{h(\theta)}
-\frac{1}{2\pi}\int_{\T_{h}(\theta)}\frac{1-|z_n|^2}{|e^{i\varphi}-z_n|^2}\log\frac{h(\varphi)}{h(\theta)} d\varphi
\\\label{veb1}&&+ \frac{1}{2\pi} \int_{\T^{-}_{_h}(\theta)}\frac{1-|z_n|^2}{|e^{i\varphi}-z_n|^2}\log\frac{h(\theta)}{h(\varphi)} d\varphi
\end{eqnarray}
(observe the inversion of the $\log$-fraction in the last integral explaining
the plus-sign before this integral)
and
\begin{eqnarray}\label{veb2}
\frac{1}{2\pi}\int_{\T_{h}(\theta)}\frac{1-|z_n|^2}{|e^{i\varphi}-z_n|^2}|\log\frac{h(\varphi)}{h(\theta)}| d\varphi \leq \log 2,
\end{eqnarray}
so that
\begin{eqnarray*}
\lefteqn{\frac{1}{2\pi}\int_{\T^{+}_{h}(\theta)}\frac{1-|z_n|^2}{|e^{i\varphi}-z_n|^2}\log\frac{h(\varphi)}{h(\theta)} d\varphi}\\
&&\geq \frac{1}{2\pi}\int_{\T^{-}_{_h}(\theta)}\frac{1-|z_n|^2}{|e^{i\varphi}-z_n|^2}\log\frac{h(\theta)}{h(\varphi)} d\varphi -|\log\frac{|O_h(z_n)|}{h(\theta)}|\\
&&\quad -\frac{1}{2\pi}\int_{\T_{h}(\theta)}\frac{1-|z_n|^2}{|e^{i\varphi}-z_n|^2}|\log\frac{h(\varphi)}{h(\theta)}| d\varphi\\
&&\geq\frac{3}{4}-\log 2, \qquad n\geq N_0.
\end{eqnarray*}
So, $e^{i\theta}\in \T_{h,3/4-\log 2}$, and, by Lemma \ref{lem1}, $\mu_{_h}>0$ a.e.\ on $\T$,
and thus $\mu_{_h}\in\Lambda.$

Now, we let $z\in\D$ be a point such that $r\geq 1-\mu_{_h}(\theta),$ then
\begin{eqnarray*}
&&\frac{1}{2\pi}\int_{ h(\varphi)\leq h(\theta)}\frac{1-r^2}{|e^{i\varphi}-z|^2}\log\frac{h(\theta)}{h(\varphi)} d\varphi\\
&=& \frac{1}{2\pi}\int_{ \frac{1}{2}h(\theta)\leq h(\varphi)\leq h(\theta)} + \frac{1}{2\pi}\int_{\T^{-}_{_h}(\theta)\atop|e^{i\varphi}-e^{i\theta}|\leq 1-r}
+\frac{1}{2\pi}\int_{\T^{-}_{_h}(\theta)\atop|e^{i\varphi}-e^{i\theta}|\geq 1-r}\\
&\leq& \log 2+ \frac{1}{\pi}\int_{\T^{-}_{_h}(\theta)\atop|e^{i\varphi}-e^{i\theta}|\leq 1-r}\frac{\log\frac{h(\theta)}{h(\varphi)}}{1-r} d\varphi +
\frac{9}{\pi}(1-r)\int_{\T^{-}_{_h}(\theta)\atop|e^{i\varphi}-e^{i\theta}|\geq 1-r} \frac{\log\frac{h(\theta)}{h(\varphi)}}{|e^{i\varphi}-e^{i\theta}|^2} d\varphi,
\end{eqnarray*}
where we have used \eqref{ch}. By the very definition of $\mu_{_h}(\theta)$, this yields
\begin{eqnarray}\label{veb5}
\frac{1}{2\pi}\int_{ h(\varphi)\leq h(\theta)}\frac{1-r^2}{|e^{i\varphi}-z|^2}\log\frac{h(\theta)}{h(\varphi)} d\varphi
\le \log 2+ 4+36.
\end{eqnarray}
Since obviously
\[
\frac{1}{2\pi}\int_{ h(\theta)\leq h(\varphi) }\frac{1-r^2}{|e^{i\varphi}-z|^2}\log\frac{h(\theta)}{h(\varphi)} d\varphi \leq 0,
\]
we obtain \eqref{veb6}. We argue similarly as in the proof of \eqref{lis} to show that if $0<\mu_{_h}(\theta)<1$ then there exists a sequence of positive numbers
$\{\varepsilon_n:n\in\N\}\subset]0,1]$  converging to $0$ such that
\begin{equation}\label{lis2}
\frac{1}{2\pi}\int_{\T^{-}_{_h}(\theta)}\frac{1-|w_n|^2}{|e^{i\varphi}-w_n|^2}\log\frac{h(\theta)}{h(\varphi)} d\varphi \geq1,
\end{equation}
where $w_n:= (1-(\mu_{_h}(\theta)+\varepsilon_n))e^{i\theta}\in\D\setminus\{0\}.$ We apply Lebesgue's dominated convergence theorem in \eqref{lis2} to deduce \eqref{veb4}.
\end{proof}

The following Lemma gives a lower estimate of $\cD_{\alpha}(O_h)$ involving $\mu_{_h}$, and
will be used in Section \ref{sect4} to get some necessary conditions for
$O_h\in\cD_\alpha.$

\begin{lem} \label{dauglas}
We have
$$\int_{\mu_{_h}(\theta)<1}h^{2}(\theta)\mu^{\alpha-1}_{h}(\theta)d\theta\leq c \cD_{\alpha}(O_h),$$
where $c>0$ is a constant independent of both $\alpha$ and $h.$
\end{lem}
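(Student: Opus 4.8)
The plan is to exploit the expression \eqref{motassa2} for $\cD_{\alpha}(O_h)$ together with the pointwise lower bound on $|O_h|$ from Lemma \ref{lem2}, and then to convert the radial weight integration into the desired power $\mu_h^{\alpha-1}$. The starting point is that Lemma \ref{EqDirInt} gives
\[
\cD_{\alpha}(O_h)\asymp \frac{1}{2\pi}\int_{\D}|O_h(z)|^2\,\frac{\partial v_{_h}}{\partial\theta}(z)\,dA_{\alpha}(z),
\]
where $dA_{\alpha}(z)=\alpha(1-r)^{\alpha-1}\,dr\,d\theta$. Since the integrand here is not pointwise nonnegative, I would not work with \eqref{motassa2} directly; instead I would return to \eqref{motassa1}, namely $\cD(O_{h,r})=\frac{1}{2\pi}\int_{-\pi}^{\pi}|O_h|^2(re^{i\theta})\frac{\partial v_{_h}}{\partial\theta}(re^{i\theta})\,r\,d\theta$, and recall via \eqref{p3} that $\cD_{\alpha}(O_h)\asymp\alpha\int_0^1\cD(O_{h,r})(1-r)^{\alpha-1}r^{-1}\,dr$. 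The quantity $\cD(O_{h,r})$ is a manifestly nonnegative (it is the classical Dirichlet integral of the dilate), and this positivity is what I want to leverage.

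The key step is a lower bound for $\cD(O_{h,r})$ valid on the annular range $r\ge 1-\mu_h(\theta)$. First I would fix attention on the set $\{\mu_h(\theta)<1\}$; by Lemma \ref{lem2} we have the pointwise modulus estimate $|O_h(re^{i\theta})|\ge e^{-41}h(\theta)$ for $r\ge 1-\mu_h(\theta)$, and inequality \eqref{veb4} provides, at the radius $z_h(\theta)=(1-\mu_h(\theta))e^{i\theta}$, a Poisson-integral lower bound of $1$ for the harmonic extension of $\log\frac{h(\theta)}{h(\varphi)}\chi_{\T_h^-(\theta)}$. The strategy is to show that this forces $\cD(O_{h,r})$ to be bounded below, after averaging in $r$ against the measure $\alpha(1-r)^{\alpha-1}\,dr$, by a quantity comparable to $\int_{\mu_h(\theta)<1}h^2(\theta)\mu_h^{\alpha-1}(\theta)\,d\theta$. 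Concretely, I expect to restrict the $\theta$-integration in \eqref{motassa1} to points with $\mu_h(\theta)<1$, use that on the relevant annulus the gradient of $v_{_h}$ interacts with $|O_h|^2\ge e^{-82}h^2(\theta)$, and then integrate the radial weight: since $\alpha\int_{1-\mu_h(\theta)}^{1}(1-r)^{\alpha-1}\,dr=\mu_h(\theta)^{\alpha}$, a factor of order $\mu_h(\theta)^{\alpha}$ should appear, and combining with the $1/\mu_h(\theta)$ coming from the Poisson kernel normalization at $z_h(\theta)$ (the kernel $\frac{1-|z_h|^2}{|e^{i\varphi}-z_h|^2}$ carries a $1/\mu_h$ scale) yields the exponent $\alpha-1$.

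The main obstacle, as I see it, is controlling the \emph{sign} and converting the Poisson-integral bound \eqref{veb4} into a genuine lower bound on the Dirichlet integral $\cD(O_{h,r})$, because $\frac{\partial v_{_h}}{\partial\theta}$ is not signed and \eqref{motassa1} mixes contributions from $\T_h^+(\theta)$, $\T_h(\theta)$ and $\T_h^-(\theta)$. The cleanest route is probably to bypass \eqref{motassa1} and instead bound $\cD(O_{h,r})$ from below directly by a single Fourier or energy quantity that is controlled by the boundary oscillation of $\log|O_h|$ near $e^{i\theta}$; the estimate \eqref{veb4} says precisely that the harmonic function $\log|O_h|$ drops by a definite amount between $e^{i\theta}$ and the deeper radius, and a drop of $\log|O_h|$ over a scale $\mu_h(\theta)$ should cost Dirichlet energy of order $h^2(\theta)$ localized to an arc of length $\asymp\mu_h(\theta)$, giving, after the radial average, the factor $\mu_h^{\alpha-1}(\theta)$. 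Making this localization quantitative and summing the arc contributions without double counting — i.e. showing the local lower bounds can be added up to a global integral with a constant $c$ independent of $\alpha$ and $h$ — is the delicate part, and I would expect to invoke a Besicovitch-type or simple disjointness argument on the arcs $\{|e^{i\varphi}-e^{i\theta}|\le\mu_h(\theta)\}$ to control the overlap.
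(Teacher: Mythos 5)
Your plan contains a genuine gap, and in fact its central heuristic is false as stated. You write that ``the estimate \eqref{veb4} says precisely that the harmonic function $\log|O_h|$ drops by a definite amount between $e^{i\theta}$ and the deeper radius.'' It does not: \eqref{veb4} only bounds from below the contribution of $\T_{_h}^{-}(\theta)$ to the Poisson integral of $\log\frac{h(\varphi)}{h(\theta)}$ at $z_h(\theta)$, whereas
\[
\log\frac{|O_h(z_h(\theta))|}{h(\theta)}
=\frac{1}{2\pi}\int_{-\pi}^{\pi}\frac{1-|z_h(\theta)|^2}{|e^{i\varphi}-z_h(\theta)|^2}\log\frac{h(\varphi)}{h(\theta)}\,d\varphi
\]
also contains the contribution of $\T_{_h}^{+}(\theta)$, which is nonnegative and can be large, so $\log|O_h|$ need not drop at $z_h(\theta)$ at all. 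This is exactly where the paper's proof introduces a dichotomy: either the $\T_{_h}^{+}(\theta_{_0})$-part of the Poisson integral is at least $1/4$ (case \eqref{case1}), in which case $h^{2}(\theta_{_0})\mu_{_h}^{\alpha-1}(\theta_{_0})$ is bounded pointwise by $\int|h(\varphi)-h(\theta_{_0})|^2|e^{i\varphi}-e^{i\theta_{_0}}|^{\alpha-2}d\varphi$ using \eqref{ch} and \eqref{logequiv}; or it is not, and then \eqref{veb4} forces the genuine drop $|O_h(z_h(\theta_{_0}))|\le e^{\log 2-3/4}h(\theta_{_0})$ (case \eqref{case2}), whence $h^2(\theta_{_0})\lesssim|O_h(z_h(\theta_{_0}))-O_h(\theta_{_0})|^2$ and Jensen's inequality yields the same pointwise bound with $|O_h(\varphi)-O_h(\theta_{_0})|^2$ in place of $|h(\varphi)-h(\theta_{_0})|^2$. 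Integrating in $\theta_{_0}$ and invoking Douglas' formula \eqref{DouglasFormulaalpha} finishes the proof. Without such a dichotomy your argument cannot even start on the set of $\theta$ where no drop occurs.

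There is a second gap in the summation step. Even granting a local lower bound of the form ``Dirichlet energy $\gtrsim h^2(\theta)$ localized on an arc of length $\asymp\mu_{_h}(\theta)$,'' a Besicovitch or disjointness argument only lets you sum contributions over a bounded-overlap subfamily of arcs; to recover the full integral $\int_{\mu_{_h}<1}h^2(\theta)\mu_{_h}^{\alpha-1}(\theta)\,d\theta$ you would need $h^2(\theta)\mu_{_h}^{\alpha-1}(\theta)$ to be essentially constant on each selected arc, i.e.\ regularity of $h$ and of $\mu_{_h}$ that is simply not available ($h$ is merely measurable). The paper avoids covering arguments and the sign problem in \eqref{motassa1} entirely: its pointwise-in-$\theta_{_0}$ estimate integrates directly against $d\theta_{_0}$, and the resulting right-hand side is exactly the Douglas double integral, which is $\lesssim\cD_{\alpha}(O_h)$. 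I would recommend abandoning the route through \eqref{motassa1} for this particular lemma.
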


Adding the points where $\mu_{_h}(\theta)=1$, we get
\begin{eqnarray}\label{Lem3.3bis}
\int_{\T}h^{2}(\theta)\mu^{\alpha-1}_{h}(\theta)d\theta\leq c \big(\cD_{\alpha}(O_h)
 +\|h\|_2^2\big).
\end{eqnarray}

\begin{proof}  According to Lemma \ref{lem2}, we have $\mu_{_h}\in\Lambda$.
Let $e^{i\theta_{_0}}\in\T_{_h}$ be a point such that $0<\mu_{_h}(\theta_{_0})<1.$ For the point $z_h(\theta_{_0})=(1-\mu_{_h}(\theta_0))e^{i\theta_0}$, we claim that two cases may occur:
\begin{equation}\label{case1}
\frac{1}{2\pi}\int_{\T^{+}_{h}(\theta_{_0})}\frac{1-|z_h(\theta_{_0})|^2}{|e^{i\varphi}-z_h(\theta_{_0})|^2}\log\frac{h(\varphi)}{h(\theta_{_0})} d\varphi
\geq\frac{1}{4}
\end{equation}
or
\begin{equation}\label{case2}
\frac{|O_h(z_h(\theta_{_0}))|}{h(\theta_{_0})}\leq e^{\log 2-\frac{3}{4}}.
\end{equation}
Indeed, if we suppose that \eqref{case1} is false, then
with \eqref{veb4}
\begin{eqnarray}\nonumber
\log \frac{|O_h(z_h(\theta_{_0}))|}{h(\theta_{_0})}
&=&\frac{1}{2\pi}\int_{\T^{-}_{h}(\theta_{_0})}\frac{1-|z_h(\theta_{_0})|^2}{|e^{i\varphi}-z_h(\theta_{_0})|^2}\log\frac{h(\varphi)}{h(\theta_{_0})} d\varphi
\\\nonumber&&+\frac{1}{2\pi}\int_{\T^{+}_{h}(\theta_{_0})}\frac{1-|z_h(\theta_{_0})|^2}{|e^{i\varphi}-z_h(\theta_{_0})|^2}\log\frac{h(\varphi)}{h(\theta_{_0})} d\varphi
\\\nonumber&&+\frac{1}{2\pi}\int_{\T_{h}(\theta_{_0})}\frac{1-|z_h(\theta_{_0})|^2}{|e^{i\varphi}-z_h(\theta_{_0})|^2}\log\frac{h(\varphi)}{h(\theta_{_0})} d\varphi
\\\nonumber&\leq&-1+\frac{1}{4}+\log2,
\end{eqnarray}
which shows \eqref{case2}.

Now, on the one hand, if $\theta_{_0}$ satisfies \eqref{case1}, then (with \eqref{ch} in mind),
\begin{eqnarray}\nonumber
\mu^{\alpha-1}_{h}(\theta_{_0})&=& (1-|z_h(\theta_0)|^2)^{\alpha-1}
\leq \frac{4}{\pi}\int_{\T^{+}_{h}(\theta_{_0})}\frac{(1-|z_h(\theta_{_0})|)^{\alpha}}{|e^{i\varphi}-z_h(\theta_{_0})|^2}\log\frac{h(\varphi)}{h(\theta_{_0})} d\varphi
\\\nonumber&\lesssim&\int_{\T^{+}_{h}(\theta_{_0})}\frac{\log\frac{h(\varphi)}{h(\theta_{_0})}}{|e^{i\varphi}-e^{i\theta_{_0}}|^{2-\alpha}} d\varphi ,
\end{eqnarray}
which gives, using \eqref{logequiv} and the triangular inequality,
\begin{eqnarray}\label{case11}
h^{2}(\theta_{_0})\mu^{\alpha-1}_{h}(\theta_{_0})\lesssim \int_{-\pi}^{\pi}\frac{|h(\varphi)-h(\theta_{_0})|^2}{|e^{i\varphi}-e^{i\theta_{_0}}|^{2-\alpha}} d\varphi 
 \le \int_{-\pi}^{\pi}\frac{|O_h(\varphi)-O_h(\theta_{_0})|^2}
 {|e^{i\varphi}-e^{i\theta_{_0}}|^{2-\alpha}} d\varphi,
\end{eqnarray}
independently of $\theta_{_0},$ $\alpha$ and $h.$

On the other hand, for almost all points $e^{i\theta_{_0}}\in\T_{_h}$ that satisfy \eqref{case2}, we have
$h^2(\theta_{_0})\lesssim|O_h(z_h(\theta_{_0}))-O_h(\theta_{_0})|^2$(observe that
$e^{\log 2-3/4}<1$), and thus, by Jensen's inequality,
\begin{eqnarray*}
h^2(\theta_{_0})\lesssim
\frac{1}{2\pi}\int_{-\pi}^{\pi}\frac{1-|z_h(\theta_{_0})|^2}{|e^{i\varphi}- z_h(\theta_{_0})|^{2}}|O_h(\varphi)-O_h(\theta_{_0})|^2 d\varphi.
\end{eqnarray*}
As a consequence
\begin{eqnarray}\nonumber
h^{2}(\theta_{_0})\mu^{\alpha-1}_{h}(\theta_{_0})&\lesssim &\int_{-\pi}^{\pi}\frac{(1-|z_h(\theta_{_0})|)^\alpha}{|e^{i\varphi}- z_h(\theta_{_0})|^{2}}|O_h(\varphi)-O_h(\theta_{_0})|^2 d\varphi
\\\label{case12}&\lesssim&\int_{-\pi}^{\pi}\frac{|O_h(\varphi)-O_h(\theta_{_0})|^2}{|e^{i\varphi}-e^{i\theta_{_0}}|^{2-\alpha}} d\varphi ,
\end{eqnarray}
independently of $\theta_{_0},$ $\alpha$ and $h.$
The desired result follows from Douglas' formula \eqref{DouglasFormulaalpha} and the inequalities \eqref{case11} and \eqref{case12}.
\end{proof}

\section{\bf The sufficiency. \label{sect3}}

In this section we prove the sufficient condition of Theorem \ref{ra},
more precisely
\begin{equation}\label{sufficient}
\cD_{\alpha}(O_h)\lesssim N_{\alpha}(h)+ \frac{1}{1-\alpha} \inf_{\lambda\in\Lambda}\Big( n_{\alpha}(h,\lambda)+\widetilde{n}_{\alpha}(h,\lambda)\Big).
\end{equation}
Observe that for this upper estimate we do not need the term $\|O_h\|_2^2$.

Recall from Lemma \ref{EqDirInt} that in order to prove that $O_h=e^{u+iv}\in\mathcal{D}_{\alpha}$
it is sufficient to estimate the integral
\begin{eqnarray}\label{IntEstim}
\frac{1}{2\pi}\int_{\D} |O_h(z)|^2\frac{\partial v_{_h}}{\partial \theta}(z) dA_{\alpha}(z),
\end{eqnarray}
where $dA_{\alpha}(z)=\alpha(1-r)^{\alpha-1}drd\theta$.

Depending on $h,$ we define the following set of rays
 $$\D_{_h}:=\{z\in\D :\ e^{i\theta}\in\T_{_h}\},$$
which we divide into the following two parts
$$\K_{_h}:= \big\{z=re^{i\theta}\in\D_{_h}\ :\  \sup\limits_{w\in\D(z)}|O_{h}(w)|\geq 2h(\theta)\big\},$$
where $\D(z):=\{w\in\D\ :\ |w-z|\leq \frac{1}{2}(1-r)\}$ is a
pseudohyperbolic disk with fixed radius,
and
$$\LL_{_h}:=\D_{_h}\setminus\K_{_h}.$$
Observe that we do not need to consider integration on the remainder
set $\D\setminus (\K_h\cup\LL_h)$ which is a union --- over a set of Lebesgue measure
zero on $\T$ --- of rays and hence
of Lebesgue area measure zero.

\subsection{The integration on the region $\K_h$} \label{Kh}
In the following Lemma we show that the integral on $\K_h$ is controlled by $N_{\alpha}(h)$ only.

\begin{lem}\label{harmonic}
We have
\begin{eqnarray}\nonumber
\int_{\K_{h}}\big|O_{h}^{2}(z)\frac{\partial v}{\partial\theta}(z)\big|dA_{\alpha}(z)
\leq c N_{\alpha}(h),
\end{eqnarray}
where $c>0$ is a constant independent of both $\alpha$ and $h.$
\end{lem}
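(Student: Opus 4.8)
The plan is to start from Lemma \ref{EqDirInt}, which reduces the whole matter to estimating $\int_{\K_h}|O_h|^2\,|\partial_\theta v|\,dA_\alpha$, and then to dominate the integrand pointwise by a manifestly positive quantity and integrate it down to the boundary double integral $N_\alpha(h)$ through the three regimes of \eqref{logequiv}. Two pointwise facts drive everything. Writing $g=u+iv$ with $O_h=e^g$, the Cauchy--Riemann equations give $\partial_\theta v=\mathrm{Re}(z\,g'(z))$, and since $\frac1{2\pi}\int_{-\pi}^{\pi}\frac{2e^{i\varphi}}{(e^{i\varphi}-z)^2}\,d\varphi=0$ I may subtract the constant $\log h(\theta)$ inside the Herglotz kernel, obtaining
\[
\Big|\frac{\partial v}{\partial\theta}(z)\Big|\le\frac1\pi\int_{-\pi}^{\pi}\frac{r}{|e^{i\varphi}-z|^2}\Big|\log\frac{h(\varphi)}{h(\theta)}\Big|\,d\varphi,\qquad z=re^{i\theta}.
\]
For the modulus, Jensen's inequality gives $|O_h(z)|^2=e^{2u(z)}\le H(z)$, where $H$ is the Poisson integral of $h^2$. (It is worth noting the clean alternative identity $|O_h|^2\,\partial_\theta v=\frac r2\,\partial_r|O_h|^2$, which exhibits the $\K_h$-integral as an $\alpha(1-r)^{\alpha-1}$-weighted total variation of $r\mapsto|O_h(re^{i\theta})|^2$ along each ray.)

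The second ingredient is where the definition of $\K_h$ enters. For $w\in\D(z)$ one has $1-|w|\asymp1-r$ and $|e^{i\varphi}-w|\asymp|e^{i\varphi}-z|$, so the Poisson kernels at $w$ and at $z$ are comparable; by Jensen this yields $\sup_{w\in\D(z)}|O_h(w)|^2\lesssim H(z)$. Hence on $\K_h$, where by definition $\sup_{\D(z)}|O_h|\ge 2h(\theta)$, I get the crucial pinning
\[
h^2(\theta)\ \lesssim\ \sup_{\D(z)}|O_h|^2\ \lesssim\ H(z),
\]
which ties the boundary scale $h^2(\theta)$ to the ambient size $H(z)$ and lets me replace the sign-indefinite factor $|O_h|^2\,\partial_\theta v$ by a positive expression built from $h^2$ and $\log\frac{h(\varphi)}{h(\theta)}$.

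Third, I would carry out the $r$-integration. Using \eqref{circ555}--\eqref{circ5552}, on the range $|e^{i\varphi}-e^{i\theta}|\ge1-r$ one has $|e^{i\varphi}-z|\asymp|e^{i\varphi}-e^{i\theta}|$, and the elementary identity
\[
\alpha\int_0^1\frac{(1-r)^{\alpha-1}}{(1-r)^2+\delta^2}\,dr\ \asymp\ \delta^{\alpha-2}
\]
holds uniformly in $\alpha\in(0,1)$, the factor $\alpha$ in $dA_\alpha$ compensating the $1/\alpha$ blow-up of the integral; this produces exactly the Douglas kernel $|e^{i\varphi}-e^{i\theta}|^{\alpha-2}$ and so keeps the final constant independent of $\alpha$, as the statement requires. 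After a Fubini rearrangement this leaves a boundary double integral whose numerator is, up to constants, $h^2(\theta)\,\big|\log\frac{h(\varphi)}{h(\theta)}\big|$, which \eqref{logequiv} converts into $N_\alpha(h)$: on $\T_h(\theta)$ it is $\asymp(h(\varphi)-h(\theta))^2$, on $\T_h^-(\theta)$ it is $\asymp h^2(\theta)\asymp|h(\varphi)-h(\theta)|^2$, and on $\T_h^+(\theta)$ one uses $h^2(\theta)\log\frac{h(\varphi)}{h(\theta)}\lesssim h^2(\varphi)\asymp|h(\varphi)-h(\theta)|^2$.

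I expect the main obstacle to be exactly the interplay between the genuinely large pointwise size of $|O_h(z)|^2$ on $\K_h$ and the fact that the weight $\partial_\theta v$ is \emph{not} sign-definite: the product must be turned into the positive functional $N_\alpha(h)$ without generating a multiple integral that fails to collapse after integrating in $r$. The kernel comparison on pseudohyperbolic disks of the second paragraph is what is designed to defeat this, since it controls one factor pointwise in terms of the boundary scale $h(\theta)$ rather than by a second, independent Poisson integral; making the resulting multiple integral actually reduce to the double integral defining $N_\alpha$ (using $|e^{i\varphi}-z|\asymp|e^{i\varphi}-e^{i\theta}|$ and the finite overlap of the disks $\D(z)$) is the delicate bookkeeping step.
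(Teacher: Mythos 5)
There is a genuine gap, and it sits exactly where you yourself flagged the difficulty: the reduction to a boundary double integral does not go through along your route. Two concrete problems. First, your endgame is to arrive at a kernel with numerator $h^2(\theta)\,\bigl|\log\frac{h(\varphi)}{h(\theta)}\bigr|$ and to convert it into $N_\alpha(h)$ via \eqref{logequiv}; but on $\T_{h}(\theta)$ this quantity is $\asymp h(\theta)\,|h(\varphi)-h(\theta)|$, the \emph{linear} difference, not $(h(\varphi)-h(\theta))^2$. What \eqref{logequiv} actually says is that $(a^2-b^2)\log\frac{a}{b}\asymp(a-b)^2$ in that regime; the quantity $a^2\bigl|\log\frac{a}{b}\bigr|$ alone is of order $a|a-b|$, which is much larger when $|a-b|\ll a$. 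The resulting integral $\int\!\!\int h(\theta)|h(\varphi)-h(\theta)|\,|e^{i\varphi}-e^{i\theta}|^{\alpha-2}d\varphi\,d\theta$ is not controlled by $N_\alpha(h)$ in general (the kernel is non-integrable near the diagonal, and an $L^2$-type Besov condition does not imply the corresponding $L^1$-type one); this is precisely the obstruction the paper comments on after \eqref{h2}. Second, the ``pinning'' $h^2(\theta)\lesssim H(z)$ goes the wrong way for your purposes: to collapse the product of the two independent integrals $H(z)$ and $\int|\log\frac{h(\varphi)}{h(\theta)}|\,|e^{i\varphi}-z|^{-2}d\varphi$ (a priori a triple integral in $(\varphi_1,\varphi_2,\theta)$ after the $r$-integration) into a double integral with $h^2(\theta)$ in front, you would need $H(z)\lesssim h^2(\theta)$ on $\K_h$, which is false: $\K_h$ is by definition the region where $O_h$ is large compared with $h(\theta)$.

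The paper's proof avoids both issues by never splitting $|O_h|^2$ and $\partial_\theta v$ into two separate integrals. From \eqref{circ5}, $\bigl|O_h^2(z)\,\frac{\partial v}{\partial\theta}(z)\bigr|\le\frac r2\bigl|\frac{\partial O_h^2}{\partial z}(z)\bigr|$, and Cauchy's estimate on $\D(z)$ gives $\bigl|\frac{\partial O_h^2}{\partial z}(z)\bigr|\lesssim\sup_{\D(z)}|O_h|^2/(1-r)$. The definition of $\K_h$ is then used to \emph{subtract} $h(\theta)$: since $\sup_{\D(z)}|O_h|\ge 2h(\theta)$, one has $\sup_{\D(z)}|O_h|\le 2\sup_{\D(z)}\bigl||O_h|-h(\theta)\bigr|\lesssim\cH(z)$, where $\cH(z)$ is the Poisson integral of $|h(\cdot)-h(\theta)|$ at $z$ (here the comparability of Poisson kernels at $w\in\D(z)$ and at $z$, which you did identify, is what is really needed). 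Squaring and applying Jensen's inequality to $\cH^2$ produces a \emph{single} Poisson integral of the quadratic difference $|h(\varphi)-h(\theta)|^2$; only then does one integrate in $r$ against $\alpha(1-r)^{\alpha-1}$ using \eqref{circ6} to obtain $N_\alpha(h)$. So the missing idea is the subtraction of $h(\theta)$ inside the majorant of $\sup_{\D(z)}|O_h|$ --- legitimate only on $\K_h$ --- combined with routing $\frac{\partial v}{\partial\theta}$ through $\frac{\partial O_h^2}{\partial z}$ rather than through the conjugate-kernel formula.
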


\begin{proof}
We suppose that the area Lebesgue measure of  $\K_h$ is different from zero
(otherwise there is nothing to prove).
Clearly
\begin{equation}\label{asia1}
\sup\limits_{w\in\D(z)}|O_{h}(w)|\leq 2 \sup\limits_{w\in\D(z)}\big||O_{h}(w)|-h(\theta)\big|,  \qquad z\in\K_{_h}.
\end{equation}
We set
$$\cH (z):= \frac{1}{2\pi}\int_{-\pi}^{\pi}\frac{1-r^2}{|e^{i\varphi}-z|^{2}}\big|h(\varphi)-h(\theta)\big| d\varphi ,\qquad z=re^{i\theta}\in\D_{_h}.$$
For a point $z=re^{i\theta}\in\K_{_h}$
\begin{eqnarray}\nonumber
\sup\limits_{w\in\D(z)}\big||O_{h}(w)|-h(\theta)\big|&=&\sup\limits_{w\in\D(z)}\{|O_{h}(w)|\}-h(\theta)
\\\nonumber&\leq&\sup\limits_{w\in\D(z)}\Big\{\frac{1}{2\pi}\int_{-\pi}^{\pi}\frac{1-|w|^2}{|e^{i\varphi}-w|^{2}}h(\varphi) d\varphi \Big\} -h(\theta)
\\\nonumber&\leq&\sup\limits_{w\in\D(z)}\Big\{\frac{1}{2\pi}\int_{-\pi}^{\pi}\frac{1-|w|^2}{|e^{i\varphi}-w|^{2}}\big|h(\varphi)-h(\theta)\big| d\varphi  \Big\}
\\\nonumber&\leq&\frac{6}{\pi}\int_{-\pi}^{\pi}\frac{1-r^2}{|e^{i\varphi}-z|^{2}}\big|h(\varphi)-h(\theta)\big| d\varphi
\\\label{asia2}
&=& 12 \cH (z).
\end{eqnarray}
Hence, with \eqref{asia1} 
we get
\begin{eqnarray}\label{circ}
\sup\limits_{w\in\D(z)}|O_{h}(w)|\leq 24\cH (z),\qquad z\in\K_{_h}.
\end{eqnarray}
The classical Cauchy formula for holomorphic functions
applied to the complex derivative of $O_{h}$ on $\partial \D(z)$ implies
\begin{eqnarray}\label{cauchy}
|\frac{\partial O_{h}}{\partial z}(z)|
\leq 2\frac{\sup\limits_{w\in\D(z)}|O_{h}(w)|}{1-r},\qquad z\in\D,
\end{eqnarray}
so that
\begin{eqnarray*}
\big|\frac{\partial O_{h}^{2}}{\partial z}(z)\big|
=\big|2O_h(z)\frac{\partial O_{h}}{\partial z}(z)\big|
\leq\label{circ1} 48^2\frac{\cH^{2} (z)}{1-r},\qquad z\in\K_{_h}.
\end{eqnarray*}
Jensen's inequality
implies
\begin{eqnarray*}
\cH^{2} (z)\leq \frac{1}{2\pi}\int_{-\pi}^{\pi}\frac{1-r^2}{|e^{i\varphi}-z|^{2}}\big|h(\varphi)-h(\theta)\big|^{2} d\varphi ,\qquad z\in\D_{_h},
\end{eqnarray*}
which gives
\begin{eqnarray*}
\big|\frac{\partial O_{h}^{2}}{\partial z}(z)\big|\leq
\frac{48^2}{\pi}\int_{-\pi}^{\pi}\frac{\big|h(\varphi)-h(\theta)\big|^{2}}{|e^{i\varphi}-z|^{2}} d\varphi ,\qquad z\in\K_{_h}.
\end{eqnarray*}
Since $O_h=e^g=e^{u+iv}$, and $g'(re^{i\theta})=\frac{e^{-i\theta}}{r}\Big(
\frac{\partial v}{\partial \theta}(re^{i\theta})-i\frac{\partial u}{\partial \theta}(re^{i\theta})\Big)$,
a computation yields
\begin{eqnarray}\label{circ5}
 |\frac{\partial O_h^2}{\partial z}(z)|&=&|2O_h(z)\frac{\partial O_h}{\partial z}(z)|
 =\frac{2|O_h(z)|^2}{r}\sqrt{\Big(\frac{\partial v}{\partial \theta}(z)\Big)^2+
 \Big(\frac{\partial u}{\partial \theta}(z)\Big)^2}\nonumber\\
&\ge& 
\frac{2}{r}|O_h(z)|^2|\frac{\partial v_h}{\partial \theta}(z)|,\qquad z\in\D,
\end{eqnarray}
and hence
\begin{equation}\label{circ55}
\big|O_{h}^{2}(z)\frac{\partial v_{_h}}{\partial\theta}(z)\big|\leq
\frac{48^2}{2\pi}\int_{-\pi}^{\pi}\frac{\big|h(\varphi)-h(\theta)\big|^{2}}{|e^{i\varphi}-z|^{2}}r d\varphi ,\qquad z\in\K_{_h}.
\end{equation}
With $z=re^{i\theta}$ and using \eqref{circ5552} we get
\begin{eqnarray}\label{circ6}
\lefteqn{\alpha\int_{0}^{1}\frac{(1-r)^{\alpha-1}}{|e^{i\varphi}-z|^{2}}dr}
  \nonumber \\
&&=\alpha\int_{0}^{1-\frac{1}{2}|e^{i\varphi}-e^{i\theta}|}\frac{(1-r)^{\alpha-1}}
 {|e^{i\varphi}-re^{i\theta}|^{2}}dr
 +\alpha\int_{1-\frac{1}{2}|e^{i\varphi}-e^{i\theta}|}^{1}\frac{(1-r)^{\alpha-1}}
 {|e^{i\varphi}-re^{i\theta}|^{2}}dr \nonumber  \\
&&\leq \alpha\int_{0}^{1-\frac{1}{2}|e^{i\varphi}-e^{i\theta}|}\frac{(1-r)^{\alpha-1}}{(1-r)^{2}}dr
 +\frac{9\alpha}{|e^{i\varphi}-e^{i\theta}|^{2}}\int_{1-\frac{1}{2}|e^{i\varphi}-e^{i\theta}|}^{1}{(1-r)^{\alpha-1}}dr\nonumber \\ 
&&\lesssim \frac{1}{|e^{i\varphi}-e^{i\theta}|^{2-\alpha}}, \qquad e^{i\varphi}\in\T\setminus\{e^{i\theta}\}.
\end{eqnarray}
Hence,  inequalities \eqref{circ55} and \eqref{circ6} yield
\begin{eqnarray}\nonumber
\int_{\K_{h}}\big|O_{h}^{2}(z)\frac{\partial v_{_h}}{\partial\theta}(z)\big|dA_{\alpha}(z)
&\leq&\frac{48^2}{2\pi}\int_{-\pi}^{\pi}\Big(\int_{\D}\big|\frac{h(\varphi)-h(\theta)}{e^{i\varphi}-z}\big|^{2}  dA_{\alpha}(z)\Big)d\varphi
\\\label{intkh}&\leq& c N_{\alpha}(h),
\end{eqnarray}
where $c>0$ is a constant independent of both $\alpha$ and $h.$
\end{proof}

\subsection{The integration on the region $\LL_{h}$}\label{Kh1}

Recall that by definition
\[
 \LL_h=\{z\in\D_h:\sup_{w\in \D(z)} |O_h(w)|<2h(\theta)\}.
\]
Fix $\lambda\in\Lambda.$ Associated with $h$ and $\lambda$ we define the following function
$$\rho_{h,\lambda}(\theta):=\min\{\mu_{_h}(\theta),2a^{-1}_{_{h,\lambda}}(\theta)\},\qquad e^{i\theta}\in\T_{_h}.$$
Since $\lambda\in\Lambda$,
a simple estimate of the integral in \eqref{ahlambda} shows
that $a_{_{h,\lambda}}<+\infty$ a.e.\ with respect to Lebesgue measure, and hence  $\rho_{h,\lambda}\in\Lambda.$

In order to estimate our integral over the region $\LL_{h}$ we need to
divide it into the following two parts
$$\LL^{1}_{h,\lambda}:=\{z\in\LL_{h}\ :\ r\leq1-\rho_{h,\lambda}(\theta)\}$$
and
$$\LL^{2}_{h,\lambda}:=\{z\in\LL_{h}\ :\ r\geq 1-\rho_{h,\lambda}(\theta)\}.$$
We observe here that since $\rho_{h,\lambda}\in\Lambda$
the boundary
of $\LL^{1}_{h,\lambda}$ meets $\T$ on a set of zero Lebesgue measure
while for $\LL^{2}_{h,\lambda}$ this happens on a set of full measure.

\subsubsection{\bf The integration on the region $\LL^{1}_{h,\lambda}$.}
In this section we
discuss the control of the integral in \eqref{IntEstim} on $\LL_{h,\lambda}^1$.

\begin{lem}\label{l1h}
\begin{eqnarray}\nonumber
\int_{\LL^{1}_{h,\lambda}}\big|O_{h}^{2}(z) \frac{\partial v_{_h}}{\partial\theta}(z)\big| dA_\alpha(z)
\leq\frac{c\alpha}{1-\alpha}(\widetilde{n}_\alpha(h,\lambda)+n_\alpha(h,\lambda)),
\end{eqnarray}
where $c>0$ is a constant independent of $\alpha,$ $h$ and $\lambda.$
\end{lem}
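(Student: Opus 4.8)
The strategy is to reduce the integrand on $\LL^{1}_{h,\lambda}$ to a single harmless power of $(1-r)$ by exploiting the defining property of $\LL_{_h}$, then to carry out the radial integration, and finally to absorb the resulting boundary quantity into $n_{\alpha}+\widetilde{n}_{\alpha}$ using the properties of $\mu_{_h}$ from Section \ref{muh}. First I record a pointwise bound valid on all of $\LL_{_h}$. By definition $\sup_{w\in\D(z)}|O_{h}(w)|<2h(\theta)$, and in particular $|O_{h}(z)|<2h(\theta)$. Exactly as in the proof of Lemma \ref{harmonic}, the Cauchy estimate \eqref{cauchy} gives $|O_{h}'(z)|\le 2(1-r)^{-1}\sup_{w\in\D(z)}|O_{h}(w)|<4h(\theta)/(1-r)$. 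Combining this with \eqref{circ5}, which yields $\big|O_{h}^{2}(z)\frac{\partial v_{_h}}{\partial\theta}(z)\big|\le r\,|O_{h}(z)|\,|O_{h}'(z)|$, we obtain
\begin{equation*}
\big|O_{h}^{2}(z)\tfrac{\partial v_{_h}}{\partial\theta}(z)\big|
\le r\,|O_{h}(z)|\,|O_{h}'(z)|
\le r\cdot 2h(\theta)\cdot\frac{4h(\theta)}{1-r}
\le\frac{8\,h^{2}(\theta)}{1-r},\qquad z\in\LL_{_h}.
\end{equation*}

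Next I integrate this bound against $dA_{\alpha}$. Since $\LL^{1}_{h,\lambda}\subseteq\{z=re^{i\theta}:e^{i\theta}\in\T_{_h},\ 1-r\ge\rho_{h,\lambda}(\theta)\}$ and the right-hand side above is nonnegative, I enlarge the domain of integration to this strip and apply Fubini. With $s=1-r$ the inner radial integral is
\begin{equation*}
\alpha\int_{0}^{1-\rho_{h,\lambda}(\theta)}\frac{(1-r)^{\alpha-1}}{1-r}\,dr
=\alpha\int_{\rho_{h,\lambda}(\theta)}^{1}s^{\alpha-2}\,ds
\le\frac{\alpha}{1-\alpha}\,\rho_{h,\lambda}(\theta)^{\alpha-1},
\end{equation*}
which is precisely where the announced factor $\alpha/(1-\alpha)$ is produced. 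Hence
\begin{equation*}
\int_{\LL^{1}_{h,\lambda}}\big|O_{h}^{2}(z)\tfrac{\partial v_{_h}}{\partial\theta}(z)\big|\,dA_{\alpha}(z)
\le\frac{c\alpha}{1-\alpha}\int_{\T_{_h}}h^{2}(\theta)\,\rho_{h,\lambda}(\theta)^{\alpha-1}\,d\theta,
\end{equation*}
and the lemma is reduced to the pointwise-in-$\theta$ estimate $h^{2}(\theta)\,\rho_{h,\lambda}(\theta)^{\alpha-1}\lesssim h^{2}(\theta)a_{_{h,\lambda}}(\theta)^{1-\alpha}+h^{2}(\theta)\int_{\T^{-}_{_h}(\theta),\,|e^{i\varphi}-e^{i\theta}|\le\lambda(\theta)}|e^{i\varphi}-e^{i\theta}|^{\alpha-2}\log\frac{h(\theta)}{h(\varphi)}\,d\varphi$, whose $\theta$-integral is $\asymp n_{\alpha}(h,\lambda)+\widetilde{n}_{\alpha}(h,\lambda)$ by \eqref{definnalpha} and \eqref{defintildenalpha}.

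To establish this pointwise estimate I split according to which term realizes $\rho_{h,\lambda}=\min\{\mu_{_h},2a^{-1}_{_{h,\lambda}}\}$. On $\{\rho_{h,\lambda}=2a^{-1}_{_{h,\lambda}}\}$ one has $\rho_{h,\lambda}^{\alpha-1}\asymp a_{_{h,\lambda}}^{1-\alpha}$, which is the density of $n_{\alpha}(h,\lambda)$, and we are done. On $\{\rho_{h,\lambda}=\mu_{_h}\}$ we may assume $\mu_{_h}(\theta)<1$, since $\mu_{_h}(\theta)=1$ forces $\rho_{h,\lambda}(\theta)=1$ and hence contributes an empty ray to $\LL^{1}_{h,\lambda}$. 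For such $\theta$, \eqref{veb4} together with the Poisson-integral interpretation of $\mu_{_h}a_{_{h,\mu_{_h}}}+\widetilde{a}_{_{h,\mu_{_h}}}/\mu_{_h}$ recalled after \eqref{ahlambda2} gives $\mu_{_h}a_{_{h,\mu_{_h}}}+\widetilde{a}_{_{h,\mu_{_h}}}/\mu_{_h}\gtrsim 1$, so that either $a_{_{h,\mu_{_h}}}\gtrsim\mu_{_h}^{-1}$ or $\widetilde{a}_{_{h,\mu_{_h}}}\gtrsim\mu_{_h}$. I then convert these scale-$\mu_{_h}$ lower bounds into the scale-$\lambda$ densities of $n_{\alpha}$ and $\widetilde{n}_{\alpha}$ by comparing $\lambda(\theta)$ with $\mu_{_h}(\theta)$ and splitting the defining integrals over $\T^{-}_{_h}(\theta)$ at $|e^{i\varphi}-e^{i\theta}|=\lambda(\theta)$: the Poisson mass carried at scales $\ge\lambda(\theta)$ feeds $a_{_{h,\lambda}}$ and hence $n_{\alpha}$, while the mass at scales $\le\lambda(\theta)$ feeds the $\widetilde{n}_{\alpha}$-density, each case producing $\mu_{_h}^{\alpha-1}$ on the right-hand side.

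I expect this last bookkeeping step to be the main obstacle. The lower bound coming from $\mu_{_h}$ lives at the single scale $\mu_{_h}$, whereas $n_{\alpha}$ and $\widetilde{n}_{\alpha}$ are built at the scale $\lambda$, so one must track carefully how much of the mass in $\int_{\T^{-}_{_h}(\theta)}\frac{1-|z_h(\theta)|^{2}}{|e^{i\varphi}-z_h(\theta)|^{2}}\log\frac{h(\theta)}{h(\varphi)}\,d\varphi$ sits on either side of $\lambda(\theta)$, and which of the two weights $|e^{i\varphi}-e^{i\theta}|^{-2}$ or $|e^{i\varphi}-e^{i\theta}|^{\alpha-2}$ then captures it. Keeping all implied constants uniform as $\alpha\to 0$, so as to be compatible with $c_{\alpha}\asymp 1$, is the delicate point of the argument.
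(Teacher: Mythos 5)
Your proof matches the paper's argument step for step through the reduction: the pointwise bound $|O_h^2\,\partial_\theta v_h|\le 8rh^2(\theta)/(1-r)$ on $\LL_h$ via \eqref{cauchy} and \eqref{circ5}, the radial integration producing $\frac{\alpha}{1-\alpha}\int h^2(\theta)\rho_{h,\lambda}^{\alpha-1}(\theta)\,d\theta$, and the case split on which term realizes the minimum defining $\rho_{h,\lambda}$. The case $\rho_{h,\lambda}=2a_{h,\lambda}^{-1}$ is complete and identical to the paper's. The gap is the case $\rho_{h,\lambda}=\mu_h$, which you yourself flag as "the main obstacle": you propose to first derive the scale-$\mu_h$ dichotomy $a_{h,\mu_h}\gtrsim\mu_h^{-1}$ or $\widetilde a_{h,\mu_h}\gtrsim\mu_h$ and then transfer each alternative to the scale $\lambda$, but this transfer is never executed, and as written the proof does not reach the densities of $n_\alpha(h,\lambda)$ and $\widetilde n_\alpha(h,\lambda)$.

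The paper closes this case without any two-scale bookkeeping. From \eqref{veb4} one writes
$\mu_h^{\alpha-1}(\theta)\le\frac{1}{\pi}\int_{\T_h^-(\theta)}\frac{(1-|z_h(\theta)|)^\alpha}{|e^{i\varphi}-z_h(\theta)|^2}\log\frac{h(\theta)}{h(\varphi)}\,d\varphi$ and splits this single integral at $|e^{i\varphi}-e^{i\theta}|=\lambda(\theta)$ --- not at $\mu_h(\theta)$. On the near piece the kernel bound \eqref{ch} gives $\frac{(1-|z_h|)^\alpha}{|e^{i\varphi}-z_h|^2}\le\frac{9}{|e^{i\varphi}-e^{i\theta}|^{2-\alpha}}$, which is exactly the $\widetilde n_\alpha(h,\lambda)$ density, with no comparison of $\lambda$ and $\mu_h$ needed. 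On the far piece one uses $|e^{i\varphi}-z_h|\ge\frac13|e^{i\varphi}-e^{i\theta}|$ together with the one observation your sketch is missing: since $\rho_{h,\lambda}(\theta)=\mu_h(\theta)$ means $\mu_h(\theta)\le 2a_{h,\lambda}^{-1}(\theta)$, one has $(1-|z_h(\theta)|)^\alpha=\mu_h^\alpha(\theta)\le 2a_{h,\lambda}^{-\alpha}(\theta)$, so the far piece is at most $36\,a_{h,\lambda}^{1-\alpha}(\theta)$, the $n_\alpha(h,\lambda)$ density, with constants uniform in $\alpha$. Your dichotomy route can in fact be completed using this same inequality $\mu_h\le 2a_{h,\lambda}^{-1}$ to absorb the mass lying between the scales $\mu_h$ and $\lambda$, but the direct split at $\lambda$ is shorter and is what you should supply to finish the proof.
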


\begin{proof}
From \eqref{cauchy}, \eqref{circ5}  and the very definition of
$\LL_h$,
\begin{eqnarray*}
\big|O^{2}_{h}(z)\frac{\partial v_{_h}}{\partial\theta}(z)\big|
\leq \frac{r}{2}\left|2O_h(z)\frac{\partial O_h}{\partial z}(z)\right|
\le 8r \frac{h^{2}(\theta)}{1-r}, \qquad z\in\LL_{h}.
\end{eqnarray*}
It follows that
\begin{eqnarray}\nonumber
\int_{\LL^{1}_{h,\lambda}}\big|O_{h}^{2}(z) \frac{\partial v_{_h}}{\partial\theta}(z)\big| dA_\alpha(z)
&\leq& 8\alpha \int_{-\pi}^{\pi}h^{2}(\theta)\big(\int_{0}^{1-\rho_{h,\lambda}(\theta)}(1-r)^{\alpha-2}dr\big)d\theta
\\\nonumber
 &=& \frac{8\alpha}{1-\alpha}\int_{-\pi}^{\pi}h^{2}(\theta)\rho_{h,\lambda}^{\alpha-1}(\theta) d\theta
-\frac{8\alpha}{1-\alpha}\int_{-\pi}^{\pi}h^{2}(\theta)d\theta
\\\label{3anba3}&\leq& \frac{8\alpha}{1-\alpha}\int_{\rho_{h,\lambda}(\theta)<1}h^{2}(\theta)\rho_{h,\lambda}^{\alpha-1}(\theta) d\theta.
\end{eqnarray}
We let $e^{i\theta_{_0}}\in\T_{_h}$ be a point such that $0<\rho_{h,\lambda}(\theta_{_0})<1.$
We first suppose that  $\rho_{h,\lambda}(\theta_{_0})=2a_{h,\lambda}^{-1}(\theta_{_0}).$ Then 
\begin{eqnarray}\label{3anba31}
\rho_{h,\lambda}^{\alpha-1}(\theta_{_0})=( a_{h,\lambda}(\theta_{_0})/2)^{1-\alpha}
\leq a_{h,\lambda}^{1-\alpha}(\theta_{_0}).
\end{eqnarray}

Now we assume that $\rho_{h,\lambda}(\theta_{_0})= \mu_{_h}(\theta_{_0}).$   Then by \eqref{veb4}
\begin{eqnarray*}
\rho_{h,\lambda}^{\alpha-1}(\theta_{_0})
 =(1-|z_h(\theta_{_0})|)^{\alpha-1}
\leq  \frac{1}{\pi}\int_{\T^{-}_{h}(\theta_{_0})}\frac{(1-|z_h(\theta_{_0})|)^\alpha}{|e^{i\varphi}-z_h(\theta_{_0})|^2}\log\frac{h(\theta_{_0})}{h(\varphi)} d\varphi .
\end{eqnarray*}
Therefore (with \eqref{ch} and \eqref{circ5552} in mind)
\begin{eqnarray*}\nonumber
\rho_{h,\lambda}^{\alpha-1}(\theta_{_0})
&\leq&  \frac{1}{\pi}\int_{\T^{-}_{h}(\theta_{_0})\atop|e^{i\varphi}-e^{i\theta_{_0}}|\leq\lambda(\theta_{_0})}\frac{(1-|z_h(\theta_{_0})|)^\alpha}{|e^{i\varphi}-z_h(\theta_{_0})|^2}\log\frac{h(\theta_{_0})}{h(\varphi)} d\varphi
\\\nonumber&&+  \frac{1}{\pi}\int_{\T^{-}_{h}(\theta_{_0})\atop|e^{i\varphi}-e^{i\theta_{_0}}|\geq\lambda(\theta_{_0})}\frac{(1-|z_h(\theta_{_0})|)^\alpha}{|e^{i\varphi}-z_h(\theta_{_0})|^2}\log\frac{h(\theta_{_0})}{h(\varphi)} d\varphi
\\\nonumber&\leq&  \frac{9}{\pi}\int_{\T^{-}_{h}(\theta_{_0})\atop|e^{i\varphi}-e^{i\theta_{_0}}|\leq\lambda(\theta_{_0})}\frac{\log\frac{h(\theta_{_0})}{h(\varphi)}}{|e^{i\varphi}-e^{i\theta_{_0}}|^{2-\alpha}} d\varphi
\\&&+
\frac{9}{\pi}(1-|z_h(\theta_{_0})|)^\alpha\int_{\T^{-}_{h}(\theta_{_0})\atop|e^{i\varphi}-e^{i\theta_{_0}}|\geq\lambda(\theta_{_0})}
\frac{\log\frac{h(\theta_{_0})}{h(\varphi)}}{|e^{i\varphi}-e^{i\theta_{_0}}|^2} d\varphi .
\end{eqnarray*}
By our assumption $\rho_{h,\lambda}(\theta_{_0})=\mu_{_h}(\theta_{_0})\le 2a^{-1}_{h,\lambda}(\theta_{_0})$, so
that $(1-|z_h(\theta_{_0})|)^{\alpha}=\mu_{_h}^{\alpha}(\theta_{_0})\le
2 a_{h,\lambda}^{-\alpha}(\theta_{_0})$.
Therefore, by the very definition of $a_{h,\lambda}(\theta)$,
\[
\frac{9}{\pi}(1-|z_h(\theta_{_0})|)^\alpha\int_{\T^{-}_{h}(\theta_{_0})\atop|e^{i\varphi}-e^{i\theta_{_0}}|\geq\lambda(\theta_{_0})}\frac{\log\frac{h(\theta_{_0})}{h(\varphi)}}{|e^{i\varphi}-e^{i\theta_{_0}}|^2} d\varphi
\le 36 a_{h,\lambda}^{1-\alpha}(\theta_{_0}).
\]
Hence
\begin{eqnarray}\label{3anba4}
\rho_{h,\lambda}^{\alpha-1}(\theta_{_0})
\leq  \frac{9}{\pi}\int_{\T^{-}_{h}(\theta_{_0})\atop|e^{i\varphi}-e^{i\theta_{_0}}|\leq\lambda(\theta_{_0})}\frac{\log\frac{h(\theta_{_0})}{h(\varphi)}}{|e^{i\varphi}-e^{i\theta_{_0}}|^{2-\alpha}} d\varphi
+
36 a_{h,\lambda}^{1-\alpha}(\theta_{_0}).
\end{eqnarray}
By combining \eqref{3anba3}, \eqref{3anba31} and \eqref{3anba4} we deduce
\begin{eqnarray}\nonumber
&&\int_{\LL^{1}_{h,\lambda}}\big|O_{h}^{2}(z) \frac{\partial v_{_h}}{\partial\theta}(z)\big| dA_\alpha(z)
\\\nonumber&\lesssim&
\frac{\alpha}{1-\alpha}\Big(\int_{-\pi}^{\pi}h^{2}(\theta)\big(\int_{\T^{-}_{_h}(\theta)\atop|e^{i\varphi}-e^{i\theta}|\leq\lambda(\theta)}\frac{\log\frac{h(\theta)}{h(\varphi)}}{|e^{i\varphi}-e^{i\theta}|^{2-\alpha}} d\varphi\big)d\theta
+\int_{-\pi}^{\pi}h^{2}(\theta)a_{_{h,\lambda}}^{1-\alpha}(\theta)d\theta\Big)
\\&\asymp&\label{intl1h}\frac{\alpha}{1-\alpha}(\widetilde{n}_\alpha(h,\lambda)+n_\alpha(h,\lambda)),
\end{eqnarray}
where $c>0$ is a constant independent of $\alpha,$ $h$ and $\lambda.$
\end{proof}

\subsubsection{\bf The integration on the region $\LL^{2}_{h,\lambda}$.}
The estimates on this domain are more complicated. It is actually not possible to use the
triangular inequality directly, and some symmetry properties of the derivative of
the conjugate Poisson kernel need to be exploited in the estimates of
$\frac{\dst \partial v_h}{\dst \partial \theta}$.
To be more precise, we need to recall that $v_{_h}$ is the conjugate function of $u_{_h}$:
\begin{eqnarray*}
 v_{_h}(z)&:=&\frac{1}{2\pi}\int_{-\pi}^{\pi}\operatorname{Im}
 \left(\frac{e^{i\varphi}+z}{e^{i\varphi}-z}
 \right)\log h(\varphi) d\varphi , \qquad z\in\D.
\end{eqnarray*}
Observe that the function
\[
 Q(e^{i\varphi},z):=\frac{\partial}{\partial\theta}\operatorname{Im}\left(\frac{e^{i\varphi}+z}{e^{i\varphi}-z}
 \right),\qquad z=re^{i\theta}\in\D,
\]
 depends only on
$|e^{i\varphi}-e^{i\theta}|$ and $r.$ More precisely, we have
\begin{eqnarray}\label{Q}
Q(e^{i\varphi},z)&=&r\frac{2(1-r)^2-|e^{i\theta}-e^{i\varphi}|^{2}(1+r^2)}{|e^{i\varphi}-z|^4},
\end{eqnarray}
which yields the following estimate
\begin{equation}\label{qu2}
|Q(e^{i\varphi},z)|\leq \frac{2}{|e^{i\varphi}-z|^2}.
\end{equation}
Note also that $\int_{-\pi}^{\pi}Q(e^{i\varphi},z)d\varphi=0$, and hence
\begin{eqnarray}\label{log}
\frac{\partial v_{_h}}{\partial\theta}(z)
=\frac{1}{2\pi}\int_{-\pi}^{\pi}Q(e^{i\varphi},z)\log \frac{h(\varphi)}{h(\theta)} d\varphi ,\qquad z\in\D_{_h}.
\end{eqnarray}
In particular
\begin{eqnarray}\label{estimdv}
\Big|\frac{\partial v_{_h}}{\partial\theta}(z)\Big|
\le\frac{1}{\pi}\int_{-\pi}^{\pi}\frac{|\log \frac{h(\varphi)}{h(\theta)}|}{|e^{i\varphi}-z|^2}dt.
\end{eqnarray}

\begin{lem}\label{l2h}We have
\begin{eqnarray}\nonumber
\Big|\int_{\LL^{2}_{h,\lambda}}|O_{h}^{2}(z)| \frac{\partial v_{_h}}{\partial\theta}(z)dA_{\alpha}(z)\Big|
\leq c N_{\alpha}(h)+\frac{c}{1-\alpha}(\widetilde{n}_\alpha(h,\lambda)+n_\alpha(h,\lambda)),
\end{eqnarray}
where $c>0$ is a constant independent of $\alpha,$ $h$ and $\lambda.$
\end{lem}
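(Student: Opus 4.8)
The plan is to control the \emph{signed} integral over $\LL^2_{h,\lambda}$ by isolating the genuine cancellation hidden in $\partial v_{_h}/\partial\theta$. The first observation is that on $\LL^2_{h,\lambda}$ the pointwise bound $|O_h(z)|^2|\partial v_{_h}/\partial\theta(z)|\lesssim h^2(\theta)/(1-r)$ obtained from \eqref{cauchy} and \eqref{circ5} (the workhorse for $\LL^1_{h,\lambda}$ in Lemma \ref{l1h}) is now useless: since $r$ runs up to $1$ on $\LL^2_{h,\lambda}$, the resulting radial integral $\alpha\int(1-r)^{\alpha-1}(1-r)^{-1}dr$ diverges, so the sign changes of the kernel $Q$ must be exploited. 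I first record that on $\LL^2_{h,\lambda}$ one has $|O_h(z)|^2\asymp h^2(\theta)$: the upper bound $|O_h(z)|<2h(\theta)$ is the defining property of $\LL_h$, and the lower bound $|O_h(z)|\ge e^{-41}h(\theta)$ is \eqref{veb6}, applicable because $r\ge 1-\rho_{h,\lambda}(\theta)\ge 1-\mu_h(\theta)$. In particular $|O_h(z)|^2\le 4h^2(\theta)$, so it suffices to bound $\int_{\LL^2_{h,\lambda}}h^2(\theta)|\partial v_{_h}/\partial\theta(z)|\,dA_\alpha(z)$ on each of the pieces below, the cancellation being used \emph{inside} the $\varphi$-integral at fixed $z$.

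Using \eqref{log} I split the defining $\varphi$-integral of $\partial v_{_h}/\partial\theta(z)$ over $\T_h^+(\theta)$, $\T_h^-(\theta)$ and $\T_h(\theta)$. On $\T_h^+(\theta)$ and $\T_h^-(\theta)$ the factor $\log(h(\varphi)/h(\theta))$ has a fixed sign, so the triangular inequality combined with \eqref{qu2} is legitimate. The $\T_h^+(\theta)$ piece, integrated in $z$ through \eqref{circ6}, reduces verbatim to the chain \eqref{bh} and is $\lesssim N_\alpha(h)$. For $\T_h^-(\theta)$ I split $\varphi$ according to $|e^{i\varphi}-e^{i\theta}|\le\lambda(\theta)$ or $\ge\lambda(\theta)$: the near part yields $\widetilde n_\alpha(h,\lambda)$ at once via \eqref{circ6}, while for the far part \eqref{circ5552} bounds the inner integral by $a_{h,\lambda}(\theta)$, and the radial integral $\alpha\int_{1-\rho_{h,\lambda}(\theta)}^1(1-r)^{\alpha-1}dr=\rho_{h,\lambda}^\alpha(\theta)\le 2a_{h,\lambda}^{-\alpha}(\theta)$ turns $\int h^2 a_{h,\lambda}\rho_{h,\lambda}^{\alpha}$ into $\int h^2 a_{h,\lambda}^{1-\alpha}=n_\alpha(h,\lambda)$, exactly as in Lemma \ref{l1h}.

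The remaining diagonal piece $\T_h(\theta)$ is the real obstacle. There $h(\varphi)\asymp h(\theta)$ and $|\log(h(\varphi)/h(\theta))|\le\log 2\asymp |h(\varphi)-h(\theta)|/h(\theta)$ is only \emph{linear} in $h(\varphi)-h(\theta)$; the triangular inequality would leave the expression $\int\!\!\int_{\T_h}\frac{h(\theta)|h(\varphi)-h(\theta)|}{|e^{i\varphi}-e^{i\theta}|^{2-\alpha}}\,d\varphi\,d\theta$, which is not dominated by $N_\alpha(h)$ and in fact diverges along the diagonal. Here I would use the symmetry of the kernel: $Q(e^{i\varphi},z)$ depends only on $|e^{i\varphi}-e^{i\theta}|$ and $r$, hence is even in $\varphi-\theta$, and since $\int_{-\pi}^\pi Q\,d\varphi=0$ this forces $\int_0^\pi Q(e^{i(\theta+t)},z)\,dt=0$. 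This half-line mean-zero property lets me subtract a constant and replace $\log(h(\varphi)/h(\theta))$ by its oscillation around the scale $1-r$ at which $Q$ changes sign; a Cauchy--Schwarz estimate in the variable $t$ then restores the quadratic quantity $|h(\varphi)-h(\theta)|^2$ and bounds this piece by $N_\alpha(h)$.

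Collecting the three contributions gives the asserted inequality, with the factor $1/(1-\alpha)$ coming from the surviving radial integration near $r=1$, as it did in Lemma \ref{l1h}. I expect the oscillation estimate on $\T_h(\theta)$, where the divergent crude bound must be beaten by the cancellation $\int_0^\pi Q\,dt=0$, rather than the off-diagonal pieces, to require the bulk of the technical work.
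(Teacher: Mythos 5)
Your overall architecture --- splitting the $\varphi$-integral defining $\partial v_{_h}/\partial\theta$ via \eqref{log} into the pieces on $\T_{_h}^{+}(\theta)$, on $\T_{_h}^{-}(\theta)$ (near and far according to $\lambda(\theta)$), and on $\T_{_h}(\theta)$, taking absolute values only where $\log(h(\varphi)/h(\theta))$ has a fixed sign, and reserving a cancellation argument for the diagonal piece --- is exactly the paper's, and your treatment of the three off-diagonal pieces (yielding $N_\alpha(h)$, $\widetilde{n}_\alpha(h,\lambda)$ and $n_\alpha(h,\lambda)$ as in the paper's $\cI_1,\cI_2,\cI_3$) is correct. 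The first genuine gap is your reduction of $|O_h(z)|^2$ to $h^2(\theta)$. The bound $|O_h(z)|^2\le 4h^2(\theta)$ (or the two-sided comparison from \eqref{veb6}) only permits replacing the weight inside integrals where absolute values have already been taken; for the signed diagonal term $\int_{\LL^{2}_{h,\lambda}}|O_h(z)|^2\big(\int_{\T_{_h}(\theta)}Q(e^{i\varphi},z)\log\frac{h(\varphi)}{h(\theta)}d\varphi\big)dA_\alpha(z)$ you must write $|O_h(z)|^2=h^2(\theta)+(|O_h(z)|^2-h^2(\theta))$ and estimate the error. The crude bound $\big||O_h(z)|^2-h^2(\theta)\big|\lesssim h^2(\theta)$ throws you straight back onto the divergent linear-difference integral you yourself identify as the obstacle; the paper instead uses $\big||O_h(z)|^2-h^2(\theta)\big|\lesssim h^2(\theta)\big|\log\frac{|O_h(z)|}{h(\theta)}\big|$, the Poisson representation of $\log|O_h|$, and Jensen's inequality to control this error (the term $\cI_5$ of its proof). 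Your proposal omits this term entirely.

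The second gap is that your mechanism for the diagonal cancellation does not work as stated. The half-line mean-zero property $\int_0^\pi Q\,dt=0$ does not let you subtract a constant from $\log(h(\varphi)/h(\theta))$, because the inner integration runs over $\T_{_h}(\theta)$ rather than over a full half-line; and a Cauchy--Schwarz estimate applied to the linear difference $|h(\varphi)-h(\theta)|/h(\theta)$ leaves a factor $\big(\int|q(t,r)|\,dt\big)^{1/2}\asymp(1-r)^{-1/2}$ which is not summable against $(1-r)^{\alpha-1}dr$ uniformly in $\alpha$. The paper's cancellation is not pointwise in $z$ at all: it is a symmetrization in $(\theta,\varphi)$ jointly (the change of variables in \eqref{ze}), exploiting the evenness of $q(t,r)$ in $t$ to pair $h^{2}(\theta)\log\frac{h(\theta+t)}{h(\theta)}$ with $h^{2}(\theta+t)\log\frac{h(\theta)}{h(\theta+t)}$ and produce the sign-definite quadratic quantity $\mathcal{P}_h(\theta,t)\le 4(h(\theta+t)-h(\theta))^2$, which is then dominated by $N_\alpha(h)$. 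That symmetrization forces the $z$-integration to be carried out over all of $\D$ rather than over $\LL^{2}_{h,\lambda}$, and the resulting correction terms over $\K_h$ and $\LL^{1}_{h,\lambda}$ (the paper's \eqref{h3} and \eqref{h4}, which is where the $1/(1-\alpha)$ actually enters) must be, and are, estimated separately; your sketch does not account for this step either.
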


\begin{proof}
By the triangular inequality we first get
\begin{eqnarray}\label{eq4p17}
\lefteqn{\Big|\int_{\LL^{2}_{h,\lambda}}|O_{h}^{2}(z)| \frac{\partial v_{_h}}{\partial\theta}(z)dA_{\alpha}(z)\Big|}\\\nonumber
 &&\leq \int_{\LL^{2}_{h,\lambda}}\big||O_{h}^{2}(z)|- h^{2}(\theta)\big|\big|\frac{\partial v_{_h}}{\partial\theta}(z)\big|dA_{\alpha}(z)
+ \big|\int_{\LL^{2}_{h,\lambda}}h^{2}(\theta)\frac{\partial v_{_h}}{\partial\theta}(z)dA_{\alpha}(z)\big|.
\end{eqnarray}
Now, by construction we have $|O_h(z)|\le 2h(\theta),$ for every $z\in\LL_h$,
and thus (considering the two cases $|O_h(z)|\ge \frac{1}{2}h(\theta)$ and
$|O_h(z)|\le \frac{1}{2}h(\theta)$)
\begin{equation*}
\frac{\big||O_{h}^{2}(z)|- h^{2}(\theta)\big|}{h^{2}(\theta)}\lesssim\big|\log\frac{|O_{h}(z)|}{h(\theta)}\big|,\qquad z\in\LL_h.
\end{equation*}
Note also that since $|O_h(z)|\le 2 h(\theta)$ we have
\[
 ||O_h(z)|^2-h^2(\theta)|\lesssim h^2(\theta).
\]
Incorporating  both estimates in the first integral in the right hand side of \eqref{eq4p17}, and
using \eqref{estimdv},
we get
\begin{eqnarray}\label{eqstar}
\lefteqn{\int_{\LL^{2}_{h,\lambda}}\big||O_{h}^{2}(z)|- h^{2}(\theta)\big|\big|\frac{\partial v_{_h}}{\partial\theta}(z)\big|dA_{\alpha}(z)}\nonumber\\
&&\lesssim
\int_{\LL^{2}_{h,\lambda}}h^{2}(\theta)\big(
\int_{\T_{_h}^{+}(\theta)\cup\T_{_h}^{-}(\theta)}\frac{\big|\log \frac{h(\varphi)}{h(\theta)}\big|}{|e^{i\varphi}-z|^{2}} d\varphi \big)dA_{\alpha}(z) \nonumber
 \\&&\quad  +   \int_{\LL^{2}_{h,\lambda}}h^{2}(\theta)\big|\log\frac{|O_{h}(z)|}{h(\theta)}\big|\big(
\int_{\T_{_h}(\theta)}\frac{\big|\log \frac{h(\varphi)}{h(\theta)}\big|}{|e^{i\varphi}-z|^{2}} d\varphi \big)dA_{\alpha}(z).
\end{eqnarray}
Consider the second integral in \eqref{eq4p17}. Decomposing the integral
in \eqref{log} into four pieces: $\T_h(\theta)$, $\T_h^+(\theta)$,
$\T_h^-(\theta)$ and $|e^{i\theta}-e^{i\varphi}|\le \lambda(\theta)$,
as well as $\T_h^-(\theta)$ and $|e^{i\theta}-e^{i\varphi}|\ge \lambda(\theta)$,
without applying the triangular inequality on the piece $\T_h(\theta)$, we
obtain first:
\begin{eqnarray*}
\lefteqn{
\Big|\int_{\LL^{2}_{h,\lambda}} h^{2}(\theta)\frac{\partial v_{_h}}{\partial\theta}(z)dA_{\alpha}(z)\big|
 \lesssim
\int_{\LL^{2}_{h,\lambda}}h^{2}(\theta)\big(
\int_{\T_{_h}^{+}(\theta)}\frac{\big|\log \frac{h(\varphi)}{h(\theta)}\big|}{|e^{i\varphi}-z|^{2}} d\varphi \big)dA_{\alpha}(z)} \\
 &&\qquad+\int_{\LL^{2}_{h,\lambda}}h^{2}(\theta)\big(
\int_{\T_{_h}^{-}(\theta)\atop |e^{i\theta}-e^{i\varphi}|\le \lambda(\theta)}\frac{\big|\log \frac{h(\varphi)}{h(\theta)}\big|}{|e^{i\varphi}-z|^{2}} d\varphi \big)dA_{\alpha}(z)\\
 &&\qquad +\int_{\LL^{2}_{h,\lambda}}h^{2}(\theta)\big(
\int_{\T_{_h}^{-}(\theta)\atop |e^{i\theta}-e^{i\varphi}|\ge \lambda(\theta)}\frac{\big|\log \frac{h(\varphi)}{h(\theta)}\big|}{|e^{i\varphi}-z|^{2}} d\varphi \big)dA_{\alpha}(z)\\
 &&\qquad+\Big|\int_{\LL^{2}_{h,\lambda}}h^{2}(\theta)\int_{\T_{_h}(\theta)}Q(e^{i\varphi},z)\log \frac{h(\varphi)}{h(\theta)} d\varphi dA_{\alpha}(z)
 \Big|\\
&&=\cI_{1}+\cI_{2}+\cI_{3}+\cI_4.
\end{eqnarray*}
And hence, noting that $\cI_{1}$, $\cI_{2}$ and $\cI_{3}$ also appear in the first integral
in \eqref{eqstar}, from \eqref{eq4p17} and \eqref{eqstar}, we thus get
\begin{eqnarray}\nonumber
\lefteqn{
\Big|\int_{\LL^{2}_{h,\lambda}} |O_h^{2}(z)|\frac{\partial v_{_h}}{\partial\theta}(z)dA_{\alpha}(z)\big|}
 \\
&& \lesssim
\cI_{1}+\cI_{2}+\cI_{3}+\cI_4+\int_{\LL^{2}_{h,\lambda}}h^{2}(\theta)\big|\log\frac{|O_{h}(z)|}{h(\theta)}\big|\big(
\int_{\T_{_h}(\theta)}\frac{\big|\log \frac{h(\varphi)}{h(\theta)}\big|}{|e^{i\varphi}-z|^{2}} d\varphi \big)dA_{\alpha}(z) \nonumber
\end{eqnarray}
The last integral on the right hand side will be denoted by $\cI_5$.

It is clear that
\begin{eqnarray}\label{quo0}
\log \frac{h(\varphi)}{h(\theta)}\le\frac{h(\varphi)-h(\theta)}{h(\theta)},\qquad \varphi\in\T_{_h}^+(\theta). 
\end{eqnarray}
Then, by using \eqref{circ6},
\begin{eqnarray}\label{quo00}
\cI_1\lesssim
\alpha\int_{\T_{h}}\int_{\T_h^+(\theta)}\int_0^1\frac{|h(\varphi)-h(\theta)|^2}
{|e^{i\varphi}-z|^{2}}(1-r)^{\alpha-1}drd\varphi d\theta
\lesssim  N_{\alpha}(h).
\end{eqnarray}
By the very definition of $\tilde{n}_{\alpha}(h,\lambda)$ and
using  again \eqref{circ6},
\begin{eqnarray}\label{quo1}
\cI_2\lesssim
\alpha\int_{\T_{h}}h^2(\theta)\int_{\T_h^-(\theta)\atop |e^{i\varphi}-e^{i\theta}|\le \lambda(\theta)}\log\frac{h(\theta)}{h(\varphi)}\int_0^1\frac{(1-r)^{\alpha-1}}
{|e^{i\varphi}-z|^{2}} drd\varphi d\theta
\lesssim \widetilde{n}_{\alpha}(h,\lambda).
\end{eqnarray}
Thanks to $|e^{i\varphi}-z|\geq\frac{1}{3}|e^{i\varphi}-e^{i\theta}|$, we get
\begin{eqnarray*}
\cI_3&\lesssim&\int_{\LL^{2}_{h,\lambda}}h^{2}(\theta)\big(\int_{\T_{_h}^{-}(\theta)\atop|e^{i\varphi}-e^{i\theta}|\geq\lambda(\theta)}\frac{\log \frac{h(\theta)}{h(\varphi)}}{|e^{i\varphi}-e^{i\theta}|^{2}} d\varphi \big) \alpha (1-r)^{\alpha-1}dr
 d\theta
   \\
&=&\int_{-\pi}^{\pi}h^{2}(\theta)\big(\int_{\T_{_h}^{-}(\theta)\atop|e^{i\varphi}-e^{i\theta}|\geq\lambda(\theta)}\frac{\log \frac{h(\theta)}{h(\varphi)}}{|e^{i\varphi}-e^{i\theta}|^{2}} d\varphi \big) \big(\int_{1-\rho_{h,\lambda}(\theta)\leq r\leq1}\alpha (1-r)^{\alpha-1}dr\big)
 d\theta.
\end{eqnarray*}
Now, the integral over $r$ corresponds to $\rho_{h,\lambda}^{\alpha}$ which is controlled
by $(2/a_{h,\lambda}(\theta))^{\alpha}$, and thus by definition of $a_{h,\lambda}$ and
$n_{\alpha}(h,\lambda)$ we get
\begin{eqnarray}\label{quo2}
\cI_3\lesssim n_{\alpha}(h,\lambda).
\end{eqnarray}
We now estimate the integral $\cI_{4}$ exploiting some symmetry properties of $Q$ that will
allow us to recover the quadratic difference $|h(\theta)-h(\varphi)|^2$ (see
\eqref{qu6} below). To this end,  in the equation \eqref{Q} we set $t:=\varphi-\theta,$  so that
\begin{eqnarray}\label{eleme}
Q(e^{i\varphi},z)=2r\frac{(1-r)^2-2\sin^2(t/2)(1+r^2)}{((1-r)^2+4r\sin^2(t/2))^2}
=:q(t,r).
\end{eqnarray}
In particular we remark that $q$ is even with respect to the first variable.  We define
$$\Gamma_{_h}(t):=\big\{|\theta|\leq \pi\ :e^{i\theta}\in \T_h,\ e^{i(\theta+t)}\in\T_{_h}(\theta) \big\},\qquad t\in[-\pi,\pi].$$
We note that $\theta\in\Gamma_{_h}(-t)$ if and only if $\theta-t\in\Gamma_{_h}(t).$
By a change of variables 
\begin{eqnarray}
\lefteqn{\int_{-\pi}^{0}q(t,r)\big(\int_{\Gamma_{_h}(t)}h^{2}(\theta)\log \frac{h(\theta+t)}{h(\theta)}d\theta\big) dt}\nonumber\\
\nonumber &&=\int_{0}^{\pi}q(t,r)\big(\int_{\Gamma_{_h}(-t)}h^{2}(\theta)\log \frac{h(\theta-t)}{h(\theta)}d\theta\big) dt
\\\label{ze}&&=\int_{0}^{\pi}q(t,r)\big(\int_{\Gamma_{_h}(t)}h^{2}(\theta+t)\log \frac{h(\theta)}{h(\theta+t)}d\theta\big) dt, \qquad 0<r<1.
\end{eqnarray}
Therefore (note that the change of variables $(\varphi,\theta)= (t+\theta,\theta)$ is harmless),
\begin{eqnarray}\nonumber
\lefteqn{\int_{\D}h^{2}(\theta) \Big(\int_{\T_{_h}(\theta)}Q(e^{i\varphi},z)\log \frac{h(\varphi)}{h(\theta)} d\varphi \Big)dA_{\alpha}(z)}
\\\nonumber&&=\alpha\int_{0}^{1}\Big(\int_{-\pi}^{\pi}h^{2}(\theta) \big(\int_{
|t|\leq\pi\atop e^{i(\theta+t)}\in\T_{_h}(\theta)}q(t,r)\log \frac{h(\theta+t)}{h(\theta)}dt\big)d\theta\Big)(1-r)^{\alpha-1}dr
\\\nonumber&&=\alpha\int_{0}^{1}\Big(\int_{-\pi}^{\pi}q(t,r)\big(\int_{\Gamma_{_h}(t)}h^{2}(\theta)\log \frac{h(\theta+t)}{h(\theta)}d\theta\big) dt\Big)(1-r)^{\alpha-1}dr
\\\nonumber&&=\alpha\int_{0}^{1}\left\{\int_{0}^{\pi}q(t,r)
 \times \left[\int_{\Gamma_{_h}(t)}h^{2}(\theta)\log \frac{h(\theta+t)}{h(\theta)}d\theta\right.\right.
\\\nonumber&&\qquad \left.\left.+\int_{\Gamma_{_h}(t)}h^{2}(\theta+t)\log \frac{h(\theta)}{h(\theta+t)}d\theta\right] dt\right\}(1-r)^{\alpha-1}dr
\\\label{zero}&&=-\alpha\int_{0}^{1}\Big(\int_{0}^{\pi}q(t,r)\big(\int_{\Gamma_{_h}(t)}\mathcal{P}_h(\theta, t)d\theta\big) dt\Big)(1-r)^{\alpha-1}dr,
\end{eqnarray}
where we have used \eqref{ze} and
$$\mathcal{P}_h(\theta, t):=\big(h^{2}(\theta+t)-h^{2}(\theta)\big)\big(\log h(\theta+t)- \log h(\theta)\big).$$
Since for $\theta\in \Gamma_h(t)$ we have $e^{i(\theta+t)}\in\T_{_h}(\theta) $, i.e.\ $h(\theta+t)
\stackrel{2}{\asymp} h(\theta)$, it is clear (see e.g.\ \eqref{logequiv}) that we get
the desired quadratic difference
\begin{eqnarray}\label{qu6}
0\leq \mathcal{P}_h(\theta, t)\leq 4\big(h(\theta+t)- h(\theta)\big)^{2} ,\qquad t\in[-\pi,\pi]\text{ and }\theta\in\Gamma_{_h}(t).
\end{eqnarray}
Observe that the function we integrate over $\LL_{h,\lambda}^2$ is not positive, so that we 
cannot just replace this domain by $\D$.
Still, writing $\LL^{2}_{h,\lambda}=\D\setminus(\K_h\cup\LL^{1}_{h,\lambda})$,
the triangular inequality obviously yields 
\begin{eqnarray}\nonumber
\cI_4
&\leq& \Big|\int_{\D}h^{2}(\theta)\Big(\int_{\T_{_h}(\theta)}Q(e^{i\varphi},z)\log \frac{h(\varphi)}{h(\theta)} d\varphi \Big)dA_{\alpha}(z)\Big|
\\\nonumber&&+
\Big|\int_{\LL^{1}_{h,\lambda}}h^{2}(\theta)\Big(\int_{\T_{_h}(\theta)}Q(e^{i\varphi},z)\log \frac{h(\varphi)}{h(\theta)} d\varphi \Big)dA_{\alpha}(z)\Big|
\\\label{h1}&&+
\Big|\int_{\K_{h}}h^{2}(\theta)\Big(\int_{\T_{_h}(\theta)}Q(e^{i\varphi},z)\log \frac{h(\varphi)}{h(\theta)} d\varphi \Big)dA_{\alpha}(z)\Big|.
\end{eqnarray}
We can now use the triangular inequality in the integral over $\D$.
From \eqref{qu2}, $|q(t,r)|=|Q(e^{i\varphi},re^{i\theta})|\le 2/|e^{i\varphi}-z|^2=2/|e^{i(t+\theta)}
-re^{i\theta}|^2$, 
and by \eqref{circ6}, $\int_0^1{\frac{\dst (1-r)^{\alpha-1}}
{\dst|e^{i(t+\theta)}-e^{i\theta}|^2}}dr\lesssim 1/|e^{i(t+\theta)}-e^{i\theta}|^{2-\alpha}$. Hence,
from \eqref{zero} et \eqref{qu6}, we deduce that
\begin{eqnarray}\label{h2}
\Big|\int_{\D}h^{2}(\theta)\Big(\int_{\T_{_h}(\theta)}Q(e^{i\varphi},z)\log \frac{h(\varphi)}{h(\theta)} d\varphi \Big)dA_{\alpha}(z)\Big|
\lesssim N_\alpha(h).
\end{eqnarray}
(Without our symmetry argument, the triangular inequality together with the estimates 
\eqref{qu2} and \eqref{circ6} would only have given the linear
difference which is not enough.)

Next, since on $\T_h(\theta)$, $|\log(h(\varphi)/h(\theta))|\le \log 2$ and 
$|Q(e^{i\varphi},z)|\lesssim 1/|e^{i\varphi}-z|^2$,  using the standard integration of 
the Poisson kernel
\begin{eqnarray}\label{standestim}
 \int_{\T}\frac{1}{|e^{i\varphi}-re^{i\theta}|^2}d\varphi
=\frac{2\pi}{1-r^2},
\end{eqnarray}
we get
\begin{eqnarray*}\nonumber
\lefteqn{\Big|\int_{\LL^{1}_{h,\lambda}}h^{2}(\theta)\Big(\int_{\T_{_h}(\theta)}Q(e^{i\varphi},z)\log \frac{h(\varphi)}{h(\theta)} d\varphi \Big)dA_{\alpha}(z)\Big|}
\\\nonumber&&\lesssim\int_{\LL^{1}_{h,\lambda}}h^{2}(\theta)(1-r)^{-1}dA_{\alpha}(z)
\\\nonumber
 &&= \frac{\alpha}{1-\alpha}\int_{-\pi}^{\pi}h^{2}(\theta)\rho_{h,\lambda}^{\alpha-1}(\theta) d\theta
-\frac{\alpha}{1-\alpha}\int_{-\pi}^{\pi}h^{2}(\theta)d\theta,
\\&&\leq \frac{\alpha}{1-\alpha}\int_{\rho_{h,\lambda}(\theta)<1}h^{2}(\theta)\rho_{h,\lambda}^{\alpha-1}(\theta) d\theta.
\end{eqnarray*}
Hence, as in the proof of Lemma \ref{l1h}, we obtain
\begin{eqnarray}\nonumber
\lefteqn{\Big|\int_{\LL^{1}_{h,\lambda}}h^{2}(\theta)\Big(\int_{\T_{_h}(\theta)}Q(e^{i\varphi},z)\log \frac{h(\varphi)}{h(\theta)} d\varphi \Big)dA_{\alpha}(z)\Big|}
\\\label{h3}&&\lesssim\frac{\alpha}{1-\alpha}(\widetilde{n}_\alpha(h,\lambda)+n_\alpha(h,\lambda)).
\end{eqnarray}
For the integral over $\K_h$ we start with the same argument as above (since in the inner
integral we indeed integrate over $\T_h(\theta)$) to get
\begin{eqnarray*}\nonumber
\Big|\int_{\K_{h}}h^{2}(\theta)\Big(\int_{\T_{_h}(\theta)}Q(e^{i\varphi},z)\log \frac{h(\varphi)}{h(\theta)} d\varphi \Big)dA_{\alpha}(z)\Big|
\lesssim \int_{\K_{h}}h^{2}(\theta)(1-r)^{-1}dA_{\alpha}(z).
\end{eqnarray*}
By the very definition of  $\K_{h}$ and the inequalities \eqref{circ}  --- implying in particular
$h(\theta)\lesssim \sup_{w\in \D(re^{i\theta})}|O_h(w)|\lesssim \mathcal{H}(z)$ --- and \eqref{circ6},
\begin{eqnarray*}\nonumber
\int_{\K_{h}}h^{2}(\theta)(1-r)^{-1}dA_{\alpha}(z)
&\lesssim& \int_{\K_{h}}\frac{\cH^2(z)}{1-r}dA_{\alpha}(z)
\\&\lesssim& \int_{\K_{h}}\int_{-\pi}^{\pi}\frac{\big|h(\varphi)-h(\theta)\big|^{2}}{|e^{i\varphi}-z|^{2}} d\varphi dA_{\alpha}(z)
\\&\lesssim& N_\alpha(h).
\end{eqnarray*}
Thus
\begin{eqnarray}\label{h4}
\Big|\int_{\K_{h}}h^{2}(\theta)\Big(\int_{\T_{_h}(\theta)}Q(e^{i\varphi},z)\log \frac{h(\varphi)}{h(\theta)} d\varphi \Big)dA_{\alpha}(z)\Big|
\lesssim N_\alpha(h).
\end{eqnarray}

By combining \eqref{h1}, \eqref{h2}, \eqref{h3} and \eqref{h4}
\begin{eqnarray}\nonumber
\cI_4\lesssim N_\alpha(h)+ \frac{\alpha}{1-\alpha}(\widetilde{n}_\alpha(h,\lambda)+n_\alpha(h,\lambda)).
\end{eqnarray}

It remains to estimate $\cI_5$. Using first the very definition of the outer function $O_h$
(so that $\log |O_h|$ is just the Poisson extension of $\log h$ at $z=re^{i\theta}$)
and then rearranging terms,
\begin{eqnarray}\label{eq4.29}\nonumber
\cI_{5}
&\leq& \int_{\LL^{2}_{h,\lambda}}h^{2}(\theta)\left(
\frac{1}{2\pi}\int_{-\pi}^{\pi}\frac{1-r^{2}}{|e^{i\varphi}-z|^{2}}\big|\log \frac{h(\varphi)}{h(\theta)}\big| d\varphi \right)\times
\\\nonumber&&\hspace{5cm}\times\left(
\int_{\T_{_h}(\theta)}\frac{\big|\log \frac{h(\varphi)}{h(\theta)}\big|}{|e^{i\varphi}-z|^{2}} d\varphi \right)dA_{\alpha}(z)
\\&=& 2\pi\int_{\LL^{2}_{h,\lambda}}h^{2}(\theta)\left(
\frac{1}{2\pi}\int_{\T_{_h}(\theta)}\frac{\big|\log \frac{h(\varphi)}{h(\theta)}\big|}{|e^{i\varphi}-z|^{2}} d\varphi \right)^{2}(1-r^{2})dA_{\alpha}(z)
\\\nonumber&&+ \int_{\LL^{2}_{h,\lambda}}h^{2}(\theta)\left(
\int_{\T_{_h}^{+}(\theta)\cup\T_{_h}^{-}(\theta)}\frac{\big|\log \frac{h(\varphi)}{h(\theta)}\big|}{|e^{i\varphi}-z|^{2}} d\varphi \right)\times
\\\nonumber &&\hspace{4cm}\times\big(
\frac{1}{2\pi}\int_{\T_{_h}(\theta)}\frac{1-r^2}{|e^{i\varphi}-z|^{2}}\big|\log \frac{h(\varphi)}{h(\theta)}\big| d\varphi \big)dA_{\alpha}(z).
\end{eqnarray}
Consider the first integral. Recall that on $\T_h(\theta)$ we have
\[
 |\log\frac{h(\theta)}{h(\varphi)}|\asymp\Big|\frac{h(\theta)-h(\varphi)}{h(\theta)}\Big|,
\]
so that by Jensen's inequality,
\begin{eqnarray*}
\lefteqn{h(\theta)^2\left(
\frac{1}{2\pi}\int_{\T_{_h}(\theta)}\frac{\big|\log \frac{h(\varphi)}{h(\theta)}\big|}{|e^{i\varphi}-z|^{2}} d\varphi \right)^{2}}\\
&&\asymp \left(
\frac{1}{2\pi}\int_{\T_{_h}(\theta)}\frac{1-r^2}{|e^{i\varphi}-z|^{2}}
 |{h(\theta)-h(\varphi)}|d\varphi \right)^{2}\times\frac{1}{(1-r^2)^2}\\
 &&\le\frac{1}{2\pi}\int_{\T_{_h}(\theta)}\frac{1-r^2}{|e^{i\varphi}-z|^{2}}
 |{h(\theta)}-{h(\varphi)}|^2d\varphi \times\frac{1}{(1-r^2)^2}.
\end{eqnarray*}
Hence using again \eqref{circ6}
\begin{eqnarray*}
\lefteqn{2\pi\int_{\LL^{2}_{h,\lambda}}h^{2}(\theta)\left(
\frac{1}{2\pi}\int_{\T_{_h}(\theta)}\frac{\big|\log \frac{h(\varphi)}{h(\theta)}\big|}{|e^{i\varphi}-z|^{2}} d\varphi \right)^{2}(1-r^{2})dA_{\alpha}(z)}\\
 &&\lesssim \int_{\LL^{2}_{h,\lambda}}\int_{\T_{_h}(\theta)}\frac{\big|h(\varphi)-h(\theta)\big|^2}{|e^{i\varphi}-z|^{2}}dA_{\alpha}(z) d\varphi\lesssim N_{\alpha}(h).
\end{eqnarray*}
Consider the second term in \eqref{eq4.29}. Since on $\T_h(\theta)$, the expression
$\log\frac{h(\varphi)}{h(\theta)}$ is bounded, the last factor is bounded by
a constant, so that this term is controlled by
$\log 2 \times  (\cI_1+\cI_2+\cI_3)$.
\end{proof}

As a conclusion, the desired  estimate \eqref{sufficient} follows from Lemmas  \ref{harmonic}, \ref{l1h} and \ref{l2h}.

\section{\bf The necessity}\label{sect4}

In this section we show that if  $O_{h}\in\cD_{\alpha},$ then
\begin{equation}\label{necessaire}
\|O_h\|_{\cD_{\alpha}} 
\gtrsim N_\alpha(h)+ n_\alpha(h) +(1-\alpha)^2\widetilde{n}_\alpha(h).
\end{equation}
Note that $\|O_h\|_{\cD_{\alpha}} \asymp \|h\|_2^2+\cD_{\alpha}(O_h)$ 
and that we have already observed  (see \eqref{estimNalpha}) that
\[
 \cD_{\alpha}(O_h)\gtrsim N_{\alpha}(h).
\]


We start with the following Lemma.

\begin{lem}\label{necess0}
We have
\begin{equation*}
\|O_h\|^{2}_{\cD_{\alpha}}\gtrsim n_\alpha(h),
\end{equation*}
independently of both $\alpha$ and  $h.$
\end{lem}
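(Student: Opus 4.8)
The plan is to deduce this lower bound directly from Lemma~\ref{dauglas}. First I would rewrite $n_{\alpha}(h)=n_{\alpha}(h,\mu_{_h})$ through the function $a_{_{h,\mu_{_h}}}$ defined in \eqref{ahlambda}: the inner integral appearing in \eqref{definnalpha} with $\lambda=\mu_{_h}$ is precisely $2\pi\,a_{_{h,\mu_{_h}}}(\theta)$, so that
$$
n_{\alpha}(h)=(2\pi)^{1-\alpha}\int_{-\pi}^{\pi}h^{2}(\theta)\,a_{_{h,\mu_{_h}}}^{1-\alpha}(\theta)\,d\theta
\asymp \int_{-\pi}^{\pi}h^{2}(\theta)\,a_{_{h,\mu_{_h}}}^{1-\alpha}(\theta)\,d\theta,
$$
where the implied constants stay bounded as $\alpha\to 1^-$ since $(2\pi)^{1-\alpha}\le 2\pi$.

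The crucial point is then the pointwise inequality $\mu_{_h}(\theta)\,a_{_{h,\mu_{_h}}}(\theta)\le 2$, which I would extract from the very definition \eqref{nana1} of $\mu_{_h}$. The key observation is that $\delta\mapsto a_{_{h,\delta}}(\theta)$ is nonincreasing: its integrand is nonnegative on $\T_{_h}^{-}(\theta)$ (there $\log\frac{h(\theta)}{h(\varphi)}\ge\log 2$) and the domain of integration $\{|e^{i\varphi}-e^{i\theta}|\ge\delta\}$ shrinks as $\delta$ grows. By the definition of $\mu_{_h}(\theta)$, for every $\delta<\mu_{_h}(\theta)$ one has $\delta\,a_{_{h,\delta}}(\theta)\le 2$, hence $a_{_{h,\delta}}(\theta)\le 2/\delta$; combined with monotonicity this gives $a_{_{h,\mu_{_h}}}(\theta)\le a_{_{h,\delta}}(\theta)\le 2/\delta$, and letting $\delta\uparrow\mu_{_h}(\theta)$ yields $a_{_{h,\mu_{_h}}}(\theta)\le 2/\mu_{_h}(\theta)$. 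The same bound holds when $\mu_{_h}(\theta)=1$ directly from the defining condition.

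Since $1-\alpha>0$, this produces $a_{_{h,\mu_{_h}}}^{1-\alpha}(\theta)\le 2\,\mu_{_h}^{\alpha-1}(\theta)$, whence
$$
n_{\alpha}(h)\lesssim\int_{-\pi}^{\pi}h^{2}(\theta)\,\mu_{_h}^{\alpha-1}(\theta)\,d\theta.
$$
To finish I would invoke inequality \eqref{Lem3.3bis} of Lemma~\ref{dauglas}, together with $\|O_h\|^{2}_{\cD_{\alpha}}\asymp\|h\|_{2}^{2}+\cD_{\alpha}(O_h)$ (recall $|O_h|=h$ a.e.\ on $\T$), to get $n_{\alpha}(h)\lesssim\|O_h\|^{2}_{\cD_{\alpha}}$ with a constant independent of $\alpha$ and $h$. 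The only genuinely delicate step is the passage to the endpoint $\delta=\mu_{_h}(\theta)$ in the supremum \eqref{nana1}; the monotonicity of $a_{_{h,\delta}}$ handles it cleanly, so I expect no real obstacle beyond this bookkeeping.
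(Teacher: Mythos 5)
Your proposal is correct and follows essentially the same route as the paper: both rest on the pointwise bound $\mu_{_h}(\theta)\,a_{_{h,\mu_{_h}}}(\theta)\le 2$ (which the paper records as a direct consequence of the definition \eqref{nana1} and you justify carefully via the monotonicity of $\delta\mapsto a_{_{h,\delta}}(\theta)$) followed by an appeal to Lemma~\ref{dauglas}. The only cosmetic difference is that the paper splits the integral over $\{\mu_{_h}<1\}$ and $\{\mu_{_h}=1\}$ and handles the second piece with $\|h\|_2^2$, whereas you invoke the combined estimate \eqref{Lem3.3bis} directly.
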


\begin{proof}
From Lemma \ref{lem2}, we know that $\mu_{_h}\in\Lambda.$ Then, setting $a_h=a_{h,\mu_{_h}}$,
by definition  of $\mu_{_h}$ (see \eqref{nana1}),
\begin{equation}\label{touta1}
\mu_{_h}(\theta) a_{_{h}}(\theta)\leq2, \quad \text{a.e.\ on }\T. 
\end{equation}
Thus, by Lemma \ref{dauglas},
\begin{eqnarray}\label{equa1}
\cD_{\alpha}(O_h)\gtrsim \int_{\mu_{_h}(\theta)<1}h^{2}(\theta)\mu^{\alpha-1}_{h}(\theta)d\theta\gtrsim\int_{\mu_{_h}(\theta)<1}h^{2}(\theta)a_{_{h}}^{1-\alpha}(\theta)d\theta.
\end{eqnarray}
Note that when $\mu_{_h}(\theta)=1$, again by \eqref{touta1} $a_h(\theta)\le 2$, and so
\begin{eqnarray}\label{equa2}
\|h\|^{2}_{2}\gtrsim\int_{\mu_{_h}(\theta)=1}h^{2}(\theta)a_{_{h}}^{1-\alpha}(\theta)d\theta.
\end{eqnarray}
The proof is completed by adding the inequalities \eqref{equa1} and \eqref{equa2} together.
\end{proof}

The most difficult part of the proof of the necessity is the control of $\tilde{n}_{\alpha}(h)$.
We set $$\M_{h}:=\{z\in\D_{_h}\ :\ r\geq 1-\mu_{_h}(\theta)\}.$$
As it turns out it is integration on $\M_h$ which will yield the desired control.
\\

We start with the following simple auxiliary lemma which is certainly well known,  but for which
we produce a proof
here for the convenience of the reader thereby exhibiting the right control of the
constants.

\begin{lem}\label{hssab}
We fix two real numbers $0< \mu \leq 1$ and $0<u\leq 2.$ Then
\begin{eqnarray*}
\frac{1}{1-\alpha}\int_{1-\mu}^{1}\frac{ru^2-(1-r)^2}{((1-r)^2+ru^2)^2}(1-r)^{\alpha-1}rdr
\geq cu^{\alpha-2}, \qquad \hbox{ if}\quad 0< u\leq \mu,
\end{eqnarray*}
where $c>0$ is a constant independent of $\alpha.$
\end{lem}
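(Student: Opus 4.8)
The plan is to reduce the two-parameter integral to a single clean one by the successive substitutions $s=1-r$ and $s=u\tau$. Since $(1-r)^2+ru^2$ becomes $u^2(\tau^2-u\tau+1)$ after these changes of variable, a bookkeeping computation turns the left-hand side into
\[
\frac{u^{\alpha-2}}{1-\alpha}\,J,\qquad J:=\int_0^{T}g(\tau)\,\tau^{\alpha-1}\,d\tau,\quad g(\tau):=\frac{(1-u\tau-\tau^2)(1-u\tau)}{(\tau^2-u\tau+1)^2},\quad T:=\frac{\mu}{u}\ge 1.
\]
The weight $u^{\alpha-2}$ is thereby extracted, so the lemma is equivalent to the lower bound $J\ge c(1-\alpha)$ with $c$ independent of $\alpha$, $u$ and $\mu$. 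The whole difficulty is now to produce the factor $1-\alpha$ out of $J$.

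First I would record the shape of $g$ on $[0,T]$: one has $g(0)=1$, and since $0\le u\tau\le\mu\le1$ there, the factor $1-u\tau$ keeps a constant sign, so $g$ is positive on $[0,\tau_0)$ and negative on $(\tau_0,T]$, where $\tau_0=\tfrac12(\sqrt{u^2+4}-u)\in(\tfrac12,1)$ is the unique positive root of $1-u\tau-\tau^2$. The cancellation that yields the small factor $1-\alpha$ is driven by the fact that the decreasing weight $\tau^{\alpha-1}$ overweights the positive hump (small $\tau$) relative to the negative hump (large $\tau$). To make this explicit I would integrate by parts against the primitive $G(\tau):=\int_0^\tau g$ --- the antiderivative vanishing at $0$, which is the right normalisation because $G(\tau)=O(\tau)$ forces the boundary contribution $G(\tau)\tau^{\alpha-1}$ to vanish as $\tau\to0^+$ (here $\alpha>0$ is used). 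This gives
\[
J=G(T)\,T^{\alpha-1}+(1-\alpha)\int_0^T G(\tau)\,\tau^{\alpha-2}\,d\tau,
\]
in which the factor $1-\alpha$ now appears openly.

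From here the positive contribution is elementary. A one-line inspection gives $g\ge\tfrac{2}{25}$ on $[0,\tfrac12]$ uniformly in $u\le1$, so $G$ is increasing on $[0,\tau_0]$ and bounded below by an absolute constant on $[\tau_0/2,\tau_0]\subset[0,T]$, while $\tau^{\alpha-2}\ge1$ there since $\tau\le1$; hence $(1-\alpha)\int_0^TG\tau^{\alpha-2}\,d\tau\ge c(1-\alpha)$. It then remains only to check that the two terms I have discarded, namely $G(T)T^{\alpha-1}$ and the part of the integral over $[\tau_0,T]$, are nonnegative. Because $g$ changes sign exactly once, $G$ increases then decreases with $G(0)=0$, so both of these reduce to the single inequality $G(T)\ge0$.

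Proving $G(T)\ge0$ is the main obstacle. In the original variable it is the assertion that $\int_{1-\mu}^{1}r\big(ru^2-(1-r)^2\big)\big((1-r)^2+ru^2\big)^{-2}\,dr\ge0$, the sign of a rational integral that is not visibly definite. One safe route is to compute the primitive of $g$ explicitly: since the denominator $(\tau^2-u\tau+1)^2$ is positive and root-free, the antiderivative has the form $\frac{p\tau+q}{\tau^2-u\tau+1}-\tfrac12\log(\tau^2-u\tau+1)+c_2\arctan(\cdot)$, and one verifies that the endpoint evaluation is nonnegative. I would, however, prefer the comparison route suggested by the identity $\frac{d}{d\tau}\frac{\tau}{\tau^2-u\tau+1}=\frac{1-\tau^2}{(\tau^2-u\tau+1)^2}$, which isolates the main positive piece: writing $G(\tau)=\frac{\tau}{\tau^2-u\tau+1}+u\,R(\tau)$ with $\frac{\tau}{\tau^2-u\tau+1}\ge0$, the remaining task is to show the correction $uR$ does not overcome this leading term on the range $u\tau\le1$, a statement that is exact at $u=0$ (where $G(\tau)=\tau/(1+\tau^2)\ge0$). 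Once $G(T)\ge0$ is secured, the two discarded terms are nonnegative and the desired bound $J\ge c(1-\alpha)$, hence the lemma, follows.
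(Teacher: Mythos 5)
Your change of variables and the resulting reduction to $J:=\int_0^T g(\tau)\,\tau^{\alpha-1}\,d\tau\ge c(1-\alpha)$ with $T=\mu/u$ are correct, and the integration by parts against $G(\tau)=\int_0^\tau g$ is a genuinely different mechanism for producing the factor $1-\alpha$ from the paper's: the paper instead observes that the $r$-integrand is $-\frac{\partial}{\partial u}\big(\frac{u}{(1-r)^2+ru^2}\big)(1-r)^{\alpha-1}r$, substitutes $1-r=su$, and differentiates $u^{\alpha-1}\int_0^{\mu/u}\frac{s^{\alpha-1}(1-us)}{1-us+s^2}\,ds$ in $u$, which produces the term $(1-\alpha)u^{\alpha-2}\int_0^{\mu/u}\frac{s^{\alpha-1}(1-us)}{1-us+s^2}\,ds$ plus two \emph{manifestly nonnegative} remainders, so the sign problem never arises. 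Your lower bound on the positive hump ($g\ge 2/25$ on $[0,1/2]$, $G$ increasing up to $\tau_0$, $\tau^{\alpha-2}\ge1$ there) is fine.

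The gap is the step you yourself call ``the main obstacle'': the inequality $G(T)\ge0$, equivalently $\int_{1-\mu}^{1}\frac{ru^2-(1-r)^2}{((1-r)^2+ru^2)^2}\,r\,dr\ge0$, on which \emph{both} discarded terms depend, and which you do not prove. Neither of your sketches closes it. The explicit-primitive route ends with the bare assertion that ``the endpoint evaluation is nonnegative''; since the antiderivative contains $\log(\tau^2-u\tau+1)$ and $\arctan$ pieces with $u$-dependent coefficients, this is not a routine check. The comparison route replaces the claim by another unproven one, and that one is delicate: with $R(\tau)=\int_0^\tau\frac{\sigma(\sigma^2+u\sigma-2)}{(\sigma^2-u\sigma+1)^2}\,d\sigma$ one finds, for $\mu$ fixed and $u\to0$ (so $T\to\infty$), that $T/(T^2-uT+1)\asymp u/\mu$ while $uR(T)\asymp u\log(\mu/u)$, so the ``correction'' actually \emph{dominates} the ``leading term'' in magnitude, and its sign (the integrand changes sign on $[0,T]$) must be tracked; the case $u=0$ is not a faithful guide. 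The irony is that the missing inequality is exactly the paper's identity specialized to $\alpha=1$: there the $(1-\alpha)$-term drops out and one reads off $G(T)=u\Big[\frac{\mu(1-\mu)}{u^2(1-\mu)+\mu^2}+\int_0^{\mu/u}\frac{s^3}{(1-us+s^2)^2}\,ds\Big]\ge0$. Until you supply an argument of this kind, the proof is incomplete.
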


Notice that while the function we integrate is not necessarily positive on the integration
domain, the integral itself will be positive.

\begin{proof}
We have
\begin{eqnarray}\nonumber
&&\int_{1-\mu}^{1}\frac{ru^2-(1-r)^2}{((1-r)^2+ru^2)^2}(1-r)^{\alpha-1}rdr
\\\nonumber&=&-\frac{\partial}{\partial u} \Big(\int_{1-\mu}^{1}\frac{u}{(1-r)^2+ru^2}(1-r)^{\alpha-1}rdr\Big)
\\\nonumber&=&-\frac{\partial}{\partial u} \Big(u^{\alpha-1}\int_{0}^{\mu/u}\frac{s^{\alpha-1}}{1-us+s^2}(1-us)ds\Big),
 \qquad \text{where }1-r=su,
\\\nonumber&=& (1-\alpha)u^{\alpha-2}\int_{0}^{\mu/u}\frac{s^{\alpha-1}}{1-us+s^2}(1-us)ds
+\frac{\mu^{\alpha}(1-\mu)}{u^{2}(1-\mu)+\mu^{2}}
\\\nonumber&&+u^{\alpha-1}\int_{0}^{\mu/u}\frac{s^{2+\alpha}}{(1-us+s^{2})^{2}}ds
\\\nonumber&\geq&(1-\alpha)u^{\alpha-2}\int_{0}^{\mu/u}\frac{s^{\alpha-1}}{1-us+s^2}(1-us)ds
\\\label{lema}&\geq&
\frac{1-\alpha}{4}u^{\alpha-2}\int_{0}^{1/4}s^{\alpha-1}ds, \qquad \hbox{ if}\quad u\leq \mu,
\end{eqnarray}
from where we deduce the assertion of
Lemma \ref{hssab}.
\end{proof}

Recall from \eqref{motassa2} that
$\cD_{\alpha}(f)\asymp \frac{1}{2\pi}\int_{\D} |f(z)|^2\frac{\partial v}{\partial \theta}(z) dA_{\alpha}(z)$,
and from
\eqref{log} that $\frac{\partial v_{_h}}{\partial\theta}(z)
=\frac{1}{2\pi}\int_{-\pi}^{\pi}Q(e^{i\varphi},z)\log \frac{h(\varphi)}{h(\theta)} d\varphi$.
The next two lemmas allow to obtain the control of $\tilde{n}_{\alpha}(h)$.

\begin{lem}\label{necess3}
We have
\begin{eqnarray*}
\int_{\M_{h}}h^{2}(\theta) \big(\int_{\T^{-}_{_h}(\theta)\atop|e^{i\varphi}-e^{i\theta}|\leq \mu_{_h}(\theta)}Q(e^{i\varphi},z)\log\frac{h(\varphi)}{h(\theta)} d\varphi \big)dA_{\alpha}(z)
\geq c(1-\alpha)\widetilde{n}_{\alpha}(h),
\end{eqnarray*}
where $c>0$ is a constant independent of both $\alpha$ and $h.$ 
\end{lem}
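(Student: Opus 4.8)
The plan is to bring the radial integration inside, use the closed form \eqref{Q} of $Q$ to reduce the radial integral of $Q$ to the one--signed kernel estimated in Lemma \ref{hssab}, and then recognize $\widetilde{n}_{\alpha}(h)$. Writing $dA_{\alpha}(z)=\alpha(1-r)^{\alpha-1}dr\,d\theta$, fixing $\theta,\varphi$ and setting $u:=|e^{i\varphi}-e^{i\theta}|$ and $D:=|e^{i\varphi}-re^{i\theta}|^2=(1-r)^2+ru^2$ (by \eqref{circ555}), I would first record the pointwise identity obtained by a direct manipulation of \eqref{Q}:
\[
-Q(e^{i\varphi},re^{i\theta})=2r\,\frac{ru^2-(1-r)^2}{D^2}+\frac{ru^2(1-r)^2}{D^2}.
\]
The first term is exactly twice the integrand of Lemma \ref{hssab} and the second is nonnegative, so for $z=re^{i\theta}\in\M_{h}$ (i.e.\ $r\ge1-\mu_{_h}(\theta)$) and $u\le\mu_{_h}(\theta)$,
\[
-\int_{1-\mu_{_h}(\theta)}^{1}Q(e^{i\varphi},re^{i\theta})(1-r)^{\alpha-1}dr\ \ge\ 2\int_{1-\mu_{_h}(\theta)}^{1}\frac{ru^2-(1-r)^2}{D^2}(1-r)^{\alpha-1}r\,dr\ >\ 0 .
\]

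Since this shows the radial integral of $Q$ is $\le0$, and since $\log\frac{h(\varphi)}{h(\theta)}=-\log\frac{h(\theta)}{h(\varphi)}\le-\log2<0$ on $\T^{-}_{_h}(\theta)$, the full integrand becomes nonnegative after flipping signs, and Tonelli's theorem justifies integrating the radial variable first. Applying Lemma \ref{hssab} with $\mu=\mu_{_h}(\theta)$ then bounds the right--hand side below, and multiplying by the outer factor $\alpha$, by $\log\frac{h(\theta)}{h(\varphi)}\ge0$ and by $h^2(\theta)$ and integrating recovers exactly $\int_{\T_{_h}}h^2(\theta)\big(\int_{\T^{-}_{_h}(\theta),\,u\le\mu_{_h}(\theta)}u^{\alpha-2}\log\frac{h(\theta)}{h(\varphi)}d\varphi\big)d\theta=\widetilde{n}_{\alpha}(h)$ up to the constant, by the definition \eqref{defintildenalpha} of $\widetilde{n}_{\alpha}(h)=\widetilde{n}_{\alpha}(h,\mu_{_h})$.

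I expect two points to be the real difficulty. First, $Q$ changes sign on $[1-\mu_{_h}(\theta),1]$, so the triangle inequality is useless and the positivity is a genuine cancellation that can only be extracted from the exact radial integration --- this is precisely why Lemma \ref{hssab} (rather than a crude modulus bound) is needed. Second, the constant must stay independent of $\alpha$: for this I would use Lemma \ref{hssab} in the sharper form its proof establishes, namely $\int_{1-\mu}^{1}\frac{ru^2-(1-r)^2}{D^2}(1-r)^{\alpha-1}r\,dr\ge c\frac{1-\alpha}{\alpha}u^{\alpha-2}$, the factor $1/\alpha$ arising from $\int_0^{1/4}s^{\alpha-1}ds$. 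This $1/\alpha$ cancels the $\alpha$ carried by $dA_{\alpha}$, leaving the clean lower bound $c(1-\alpha)\widetilde{n}_{\alpha}(h)$ with $c$ independent of $\alpha$, as claimed.
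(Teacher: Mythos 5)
Your proof is correct and takes essentially the same route as the paper's: integrate radially first, reduce $-Q$ to the kernel of Lemma \ref{hssab} plus a nonnegative remainder (your exact identity is just the paper's use of $1+r^2\ge 2r$ made explicit, the surplus being $ru^2(1-r)^2/D^2$), and then recognize $\widetilde{n}_{\alpha}(h)$. You also correctly isolate the one delicate constant-tracking point, namely that one must use the sharpened bound $\gtrsim\frac{1-\alpha}{\alpha}u^{\alpha-2}$ from the \emph{proof} of Lemma \ref{hssab} so that the $1/\alpha$ cancels the $\alpha$ carried by $dA_{\alpha}$ --- exactly what the paper does implicitly in its estimate \eqref{f4}.
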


\begin{proof}
We have
\begin{eqnarray}\label{f3}
&&\int_{\M_{h}}h^{2}(\theta) \big(\int_{\T^{-}_{_h}(\theta)\atop|e^{i\varphi}-e^{i\theta}|\leq \mu_{_h}(\theta)}Q(e^{i\varphi},z)\log\frac{h(\varphi)}{h(\theta)} d\varphi \big)dA_{\alpha}(z)
\\\nonumber&=&\int_{-\pi}^{\pi}h^{2}(\theta)
\Big(\int_{\T^{-}_{_h}(\theta)\atop|e^{i\varphi}-e^{i\theta}|\leq \mu_{_h}(\theta)}\log\frac{h(\theta)}{h(\varphi)} \big(-\alpha\int_{1-\mu_{_h}(\theta)}^{1}Q(e^{i\varphi},z)(1-r)^{\alpha-1}dr\big)d\varphi \Big)d\theta.
\end{eqnarray}
Let $e^{i\theta}\in\T_{_h}$ be a point such that $\mu_{_h}(\theta)=:\mu>0.$  For a fixed number  $0<t=\varphi-\theta\leq\pi$ we set  $u:=2\sin(t/2).$
With \eqref{eleme} in mind,
\begin{eqnarray*}
\lefteqn{-\alpha\int_{1-\mu}^{1} q(t,r)(1-r)^{\alpha-1}dr}
\\&&=\alpha\int_{1-\mu}^{1} 2\frac{2\sin^2(t/2)(1+r^2)-(1-r)^2}{((1-r)^2+4r\sin^2(t/2))^2}(1-r)^{\alpha-1}rdr
\\&&\geq 2\alpha\int_{1-\mu}^{1}\frac{ru^2-(1-r)^2}{((1-r)^2+ru^2)^2}(1-r)^{\alpha-1}rdr,
\end{eqnarray*}
where in the last inequality we have used $1+r^2\geq 2r$ and the fact that the denominator
is positive (again, the function we integrate is not necessarily positive on the whole
integration interval).

Now,
an easy computation gives
$|u|=|2\sin(t/2)|=|e^{i\varphi}-e^{i\theta}|$ which is supposed to be bounded
by $\mu$ in \eqref{f3}, so that by Lemma \ref{hssab}, we get
\begin{eqnarray}\label{f4}
  -\alpha\int_{1-\mu}^{1} q(t,r)(1-r)^{\alpha-1}dr
\gtrsim (1-\alpha)u^{\alpha-2}=\frac{1-\alpha}{|e^{i\varphi}-e^{i\theta}|^{2-\alpha}}.
\end{eqnarray}
The estimate in Lemma \ref{necess3} follows from \eqref{f3}, \eqref{f4} and the very definition of
$\tilde{n}_{\alpha}(h).$ 
\end{proof}

The next lemma connects the previous estimate with $\cD_{\alpha}(O_h)$.

\begin{lem}\label{necess2}
We have
\begin{eqnarray}\nonumber
{\mathcal{D}_{\alpha}(O_h)}+\|O_h\|_2^2
\gtrsim \int_{\M_{h}}h^{2}(\theta) \big(\int_{\T^{-}_{_h}(\theta)\atop|e^{i\varphi}-e^{i\theta}|\leq \mu_{_h}(\theta)}Q(e^{i\varphi},z)\log\frac{h(\varphi)}{h(\theta)} d\varphi \big)dA_{\alpha}(z),
\end{eqnarray}
independently of both $\alpha$ and $h.$ 
\end{lem}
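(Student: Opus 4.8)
The plan is to start from the two representations already available: by Lemma~\ref{EqDirInt}, $\cD_{\alpha}(O_h)\asymp\frac1{2\pi}\int_{\D}|O_h|^2\frac{\partial v_{_h}}{\partial\theta}\,dA_{\alpha}$, a \emph{nonnegative} quantity, and by \eqref{log}, $2\pi\frac{\partial v_{_h}}{\partial\theta}(z)=\int_{-\pi}^{\pi}Q(e^{i\varphi},z)\log\frac{h(\varphi)}{h(\theta)}\,d\varphi$. The target integral is exactly the $h^2(\theta)$-weighted integral over $\M_h$ of the ``close small'' part of this $\varphi$-integral, namely the part over $\T^{-}_{_h}(\theta)\cap\{|e^{i\varphi}-e^{i\theta}|\le\mu_{_h}(\theta)\}$. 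I would therefore split the $\varphi$-integral into four regions --- $\T_{_h}(\theta)$, $\T^{+}_{_h}(\theta)$, the close small set, and the far small set $\T^{-}_{_h}(\theta)\cap\{|e^{i\varphi}-e^{i\theta}|\ge\mu_{_h}(\theta)\}$ --- and solve for the close small piece. Writing $[\,\cdot\,]$ for the corresponding inner $\varphi$-integral against $Q\log\frac{h(\varphi)}{h(\theta)}$, this gives
\[
\int_{\M_h}h^2[\mathrm{cs}]\,dA_\alpha=2\pi\int_{\M_h}h^2(\theta)\frac{\partial v_{_h}}{\partial\theta}\,dA_{\alpha}-\int_{\M_h}h^2\big([\T_{_h}]+[\T^{+}_{_h}]+[\mathrm{fs}]\big)\,dA_\alpha,
\]
where I deliberately take no absolute values, so that the cancellation inside $\frac{\partial v_{_h}}{\partial\theta}$ is preserved.

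The three correction integrals are the ``easy'' pieces, and I would estimate them exactly as the terms $\cI_1,\cI_3,\cI_4$ of Section~\ref{sect3}: the $\T^{+}_{_h}$ piece by $N_\alpha(h)$ via \eqref{quo0} and \eqref{circ6}; the far small piece by $n_\alpha(h)$ via \eqref{qu2}, the bound $|e^{i\varphi}-z|\gtrsim|e^{i\varphi}-e^{i\theta}|$ and the definitions of $a_{_h}$ and $n_\alpha$; and the $\T_{_h}$ piece by the symmetrization of $Q$ in \eqref{ze}--\eqref{qu6}, which converts the linear difference into the quadratic one and returns $N_\alpha(h)$, the radial complement being absorbed through Lemma~\ref{dauglas}. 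Since $N_\alpha(h)\lesssim\cD_\alpha(O_h)$ by \eqref{estimNalpha} and $n_\alpha(h)\lesssim\|O_h\|^2_{\cD_\alpha}$ by Lemma~\ref{necess0}, all three are $\lesssim\cD_\alpha(O_h)+\|O_h\|_2^2$.

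The real work is the main term $\int_{\M_h}h^2(\theta)\frac{\partial v_{_h}}{\partial\theta}\,dA_{\alpha}$, which I would compare with the global nonnegative integral $\int_{\D}|O_h|^2\frac{\partial v_{_h}}{\partial\theta}\,dA_{\alpha}\asymp\cD_\alpha(O_h)$. On $\M_h$, estimate \eqref{veb6} gives $|O_h(z)|\ge e^{-41}h(\theta)$, and on $\M_h\cap\LL_{_h}$ one also has $|O_h|\le 2h$, so there $h^2(\theta)\asymp|O_h(z)|^2$; the contribution of $\M_h\cap\K_{_h}$ is handled by the Cauchy/harmonic-majorant estimate of Lemma~\ref{harmonic} and is $\lesssim N_\alpha(h)$, while completing the radial integration to all of $\D$ introduces only $\int_{\mu_{_h}<1}h^2\mu_{_h}^{\alpha-1}\,d\theta$, which is $\lesssim\cD_\alpha(O_h)+\|O_h\|_2^2$ by Lemma~\ref{dauglas} and \eqref{Lem3.3bis}.

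The main obstacle is precisely this change of weight from $h^2(\theta)$ to $|O_h(z)|^2$: because $\frac{\partial v_{_h}}{\partial\theta}$ is not sign-definite, one cannot simply dominate by $\int_{\M_h}h^2|\frac{\partial v_{_h}}{\partial\theta}|\,dA_\alpha$, since the close small piece would resurface and produce $\widetilde n_\alpha(h)$ --- the very quantity one is trying to bound --- making the argument circular. I expect the resolution to be an absorption: keep the close small piece intact inside the target integral, control the weight-change error so that any $\widetilde n_\alpha(h)$-flavoured contribution it produces appears as a \emph{small} multiple of $\int_{\M_h}h^2[\mathrm{cs}]\,dA_\alpha$ (through an extra factor $(1-\alpha)$, since $\int_{\M_h}h^2[\mathrm{cs}]\,dA_\alpha\ge c(1-\alpha)\widetilde n_\alpha(h)$ by Lemma~\ref{necess3}), and then move it to the left-hand side. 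This absorption, combined with the nonnegativity of the global Dirichlet integral, is what closes the estimate and accounts for the extra power of $(1-\alpha)$ in \eqref{necessaire}.
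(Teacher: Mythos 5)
Your decomposition is exactly the paper's: you reproduce \eqref{g1}, isolating the close--small piece as $\int_{\M_h}h^2(\theta)\frac{\partial v_{_h}}{\partial\theta}\,dA_\alpha$ minus the far--small, $\T_h^+(\theta)$ and $\T_h(\theta)$ corrections, and your treatment of those three corrections (via $n_\alpha$ through Lemma \ref{dauglas}, via $N_\alpha$, and via the symmetrization \eqref{ze}--\eqref{qu6} plus Lemma \ref{dauglas} for the radial complement) matches \eqref{g3}, \eqref{g4} and \eqref{g5}. The gap is the main term: the estimate $\big|\int_{\M_h}h^{2}(\theta)\frac{\partial v_{_h}}{\partial\theta}(z)\,dA_{\alpha}(z)\big|\lesssim\frac{1}{1-\alpha}\cD_\alpha(O_h)$ is precisely the content of the separate Lemma \ref{necess1}, which the paper itself calls lengthier and postpones. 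You correctly identify the weight change from $h^2(\theta)$ to $|O_h(z)|^2$ and the circularity danger as the crux, but your proposed resolution --- absorb a small multiple of the target using the $(1-\alpha)$ gain from Lemma \ref{necess3} --- is stated as an expectation, not carried out, and as sketched it does not close: once you take absolute values on the weight-change error and apply the arithmetic--geometric mean inequality, you are left with $\int h^{2}(\theta)(1-r)\big|\frac{\partial v_{_h}}{\partial\theta}\big|^{2}dA_\alpha$, and Jensen's inequality turns the close--small contribution of this into an integral involving $h^{2}(\theta)\log^{2}\frac{h(\theta)}{h(\varphi)}$, which is \emph{not} dominated by any multiple, small or otherwise, of the target integrand $h^{2}(\theta)\log\frac{h(\theta)}{h(\varphi)}$, since the logarithm is unbounded on $\T^{-}_{_h}(\theta)$.

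The missing idea is the paper's estimate \eqref{ss3}: because $\log|O_h|$ is the Poisson integral of $\log h$, the \emph{entire} integral of $\frac{\log(h(\theta)/h(\varphi))}{|e^{i\varphi}-z|^{2}}$ over $\{h(\varphi)\le h(\theta)\}$ --- close--small part included --- equals $-\frac{2\pi}{1-r^{2}}\log\frac{|O_h(z)|}{h(\theta)}$ plus the corresponding integral over $\{h(\varphi)\ge h(\theta)\}$; on the relevant region \eqref{veb6} and \eqref{bord2} then give $\big|\log\frac{|O_h(z)|}{h(\theta)}\big|\lesssim|O_h(z)-O_h(\theta)|/h(\theta)$, and everything is controlled by Douglas' formula \eqref{DouglasFormulaalpha} and by $N_\alpha(h)$, with no self-referential term to absorb. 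The paper also splits $\M_h$ into $\M_h\cap\K_h$ (handled by \eqref{veb6} and Lemma \ref{harmonic}) and $\LL^{2}_{h}=\M_h\setminus\K_h$, and on the latter uses the one-sided bound $h^{2}(\theta)\le|O_h^{2}(z)-O_h^{2}(\theta)|+|O_h(z)|^{2}$ rather than a two-sided comparison $h^{2}(\theta)\asymp|O_h(z)|^{2}$, so that the sign-indefinite factor $\frac{\partial v_{_h}}{\partial\theta}$ is only ever paired either with quantities controlled by Douglas' formula or with the globally nonnegative integral \eqref{motassa2}. Without this step your proof of \eqref{g2} is incomplete.
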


\begin{proof}
Recall that
\[
 \frac{\partial v_h}{\partial \theta}=\int_{-\pi}^{\pi}
 Q(e^{i\varphi},z)\log \frac{h(\varphi)}{h(\theta)}d\varphi,
\]
and hence
\begin{eqnarray}\nonumber
\lefteqn{\int_{\M_{h}}h^{2}(\theta) \big(\int_{\T^{-}_{_h}(\theta)\atop|e^{i\varphi}-e^{i\theta}|\leq \mu_{_h}(\theta)}Q(e^{i\varphi},z)\log\frac{h(\varphi)}{h(\theta)} d\varphi \big)dA_{\alpha}(z)}
\\\nonumber&&=\int_{\M_{h}}h^{2}(\theta) \frac{\partial v_{_h}}{\partial\theta}(z)dA_{\alpha}(z)
\\\nonumber&&\quad-\int_{\M_h }h^{2}(\theta) \big(\int_{\T^{-}_{_h}(\theta)\atop|e^{i\varphi}-e^{i\theta}|\geq \mu_{_h}(\theta)}Q(e^{i\varphi},z)\log\frac{h(\varphi)}{h(\theta)} d\varphi \big)dA_{\alpha}(z)
\\\nonumber&&\quad- \int_{\M_{h}}h^{2}(\theta) \big(\int_{\T^{+}_{_h}(\theta)}Q(e^{i\varphi},z)\log\frac{h(\varphi)}{h(\theta)} d\varphi \big)dA_{\alpha}(z)
\\\label{g1}&&\quad- \int_{\M_{h}}h^{2}(\theta) \big(\int_{\T_{_h}(\theta)}Q(e^{i\varphi},z)\log\frac{h(\varphi)}{h(\theta)} d\varphi \big)dA_{\alpha}(z).
\end{eqnarray}
We will now estimate the 4 integrals appearing above.

The proof of the first estimate
\begin{eqnarray}\label{g2}
 \left|\int_{\M_{h}}h^{2}(\theta) \frac{\partial v_{_h}}{\partial\theta}(z)dA_{\alpha}(z)\right|
 \lesssim  \frac{1}{1-\alpha}\mathcal{D}_{\alpha}(O_h)
\end{eqnarray}
is lengthier, and we prefer to postpone it to the end of this section (see Lemma \ref{necess1}).

Next, from  \eqref{qu2} we get
\begin{eqnarray*}
&&\Big|\int_{\M_{h}}h^{2}(\theta) \big(\int_{\T^{-}_{_h}(\theta)\atop|e^{i\varphi}-e^{i\theta}|\geq \mu_{_h}(\theta)}Q(e^{i\varphi},z)\log\frac{h(\varphi)}{h(\theta)} d\varphi \big)dA_{\alpha}(z)\Big|
\\&\lesssim&
\int_{\M_{h}}h^{2}(\theta) \big(\int_{\T^{-}_{_h}(\theta)\atop|e^{i\varphi}-e^{i\theta}|\geq \mu_{_h}(\theta)}
\frac{\log\frac{h(\theta)}{h(\varphi)}}{|e^{i\varphi}-e^{i\theta}|^2} d\varphi \big)dA_{\alpha}(z)
\\&=&\int_{-\pi}^{\pi}h^{2}(\theta) a_{_h}(\theta)\big(\int_{1-\mu_{_h}(\theta)}^1\alpha(1-r)^{\alpha-1} dr\big) d\theta.
\end{eqnarray*}
Obviously $\int_{1-\mu_{_h}(\theta)}^1\alpha(1-r)^{\alpha-1} dr=\mu_{_h}(\theta)^{\alpha}$,
and by definition $a_h(\theta)\le 2/\mu_{_h}(\theta)$. Hence,
with Lemma \ref{dauglas}, and in particular \eqref{Lem3.3bis},
\begin{eqnarray}\nonumber
\lefteqn{\Big|\int_{\M_{h}}h^{2}(\theta) \big(\int_{\T^{-}_{_h}(\theta)\atop|e^{i\varphi}-e^{i\theta}|\geq \mu_{_h}(\theta)}Q(e^{i\varphi},z)\log\frac{h(\varphi)}{h(\theta)} d\varphi \big)dA_{\alpha}(z)\Big|}\\
&&\label{g3}\lesssim \int_{-\pi}^{\pi}h^{2}(\theta) \mu^{\alpha-1}_{_h}(\theta)d\theta
 \lesssim \|h\|^{2}_{2}+\mathcal{D}_{\alpha}(O_h).
\end{eqnarray}

Consider the integral on $\T_h^+(\theta)$. Again using \eqref{qu2} and \eqref{circ6}
\[
 \alpha\int_{1-\mu}^1 Q(e^{i\varphi},z)(1-r)^{\alpha - 1}dr\lesssim
  \frac{1}{|e^{i\varphi}-e^{i\theta}|^{2-\alpha}}.
\]
Since on $\T_h^+(\theta)$ we have $h^2(\theta)\log \frac{h(\varphi)}{h(\theta)}
\le h(\theta)h(\varphi)\lesssim (h(\varphi)-h(\theta))^2$, we get
\begin{eqnarray}\label{g4}
\int_{\M_{h}}h^{2}(\theta) \big(\int_{\T^{+}_{_h}(\theta)}Q(e^{i\varphi},z)\log\frac{h(\varphi)}{h(\theta)} d\varphi \big)dA_{\alpha}(z)
\lesssim N_\alpha(h)\lesssim \mathcal{D}_{\alpha}(O_h).
\end{eqnarray}

For the last integral, we have
\begin{eqnarray}\nonumber
\lefteqn{\int_{\M_{h}}h^{2}(\theta) \big(\int_{\T_{_h}(\theta)}Q(e^{i\varphi},z)\log\frac{h(\varphi)}{h(\theta)} d\varphi \big)dA_{\alpha}(z)}
\\\nonumber&&=\int_{\D}h^{2}(\theta) \big(\int_{\T_{_h}(\theta)}Q(e^{i\varphi},z)\log\frac{h(\varphi)}{h(\theta)} d\varphi \big)dA_{\alpha}(z)
\\\nonumber&&\quad- \int_{\D\setminus \M_{h}}h^{2}(\theta) \big(\int_{\T_{_h}(\theta)}Q(e^{i\varphi},z)\log\frac{h(\varphi)}{h(\theta)} d\varphi \big)dA_{\alpha}(z).
\end{eqnarray}
As in \eqref{h2}
\begin{eqnarray}\nonumber
\Big|\int_{\D}h^{2}(\theta) \big(\int_{\T_{_h}(\theta)}Q(e^{i\varphi},z)\log\frac{h(\varphi)}{h(\theta)} d\varphi \big)dA_{\alpha}(z)\Big|
\lesssim N_{\alpha}(h).
\end{eqnarray}
Since on $\T_h(\theta)$, $|\log(h(\varphi)/h(\theta))|\le \log 2$, and
with \eqref{qu2} and \eqref{standestim} in mind,
$$\Big|\int_{\T_{_h}(\theta)}Q(e^{i\varphi},z)\log\frac{h(\varphi)}{h(\theta)} d\varphi \Big|\lesssim \frac{1}{1-r}.$$
Then, by using Lemma \ref{dauglas},
\begin{eqnarray}\nonumber
\lefteqn{\Big|\int_{\D\setminus \M_{h}}h^{2}(\theta) \big(\int_{\T_{_h}(\theta)}Q(e^{i\varphi},z)\log\frac{h(\varphi)}{h(\theta)} d\varphi \big)dA_{\alpha}(z)\Big|}
\\\nonumber&&\lesssim
\Big|\int_{\D\setminus \M_{h}}h^{2}(\theta) (1-r)^{-1}dA_{\alpha}(z)\Big|
=\alpha\int_{-\pi}^{\pi}h^2(\theta)\int_{r\le 1-\mu_{_h}(\theta)}\frac{1}{(1-r)^{2-\alpha}}drd\theta
\\\nonumber&&\lesssim \frac{\alpha}{1-\alpha}\int_{\mu_{_h}(\theta)<1}h^{2}(\theta)\mu^{\alpha-1}_{_h}(\theta)d\theta
\\\nonumber&&\lesssim \frac{\alpha}{1-\alpha}\cD_{\alpha}(O_h).
\end{eqnarray}
Therefore
\begin{eqnarray}\label{g5}
\Big|\int_{\M_{h}}h^{2}(\theta) \big(\int_{\T_{_h}(\theta)}Q(e^{i\varphi},z)\log\frac{h(\varphi)}{h(\theta)} d\varphi \big)dA_{\alpha}(z)\Big|
\lesssim\frac{1}{1-\alpha}\cD_{\alpha}(O_h).
\end{eqnarray}
Taking \eqref{g2} for granted (see Lemma \ref{necess1} below),
the desired result follows from this estimate as well as from the estimates \eqref{g1}, 
\eqref{g3}, \eqref{g4} and \eqref{g5}.
\end{proof}

To finish the proof of the necessary condition of Theorem \ref{ra},
i.e. \eqref{necessaire}, it suffices to combine
Lemmas  \ref{necess0}, \ref{necess3} and \ref{necess2}.\\

We finish this section with the proof of \eqref{g2}:

\begin{lem}\label{necess1}
We have
\begin{eqnarray*}
\cD_{\alpha}(O_h)\gtrsim \Big|\int_{\M_{h}}h^{2}(\theta)  \frac{\partial v_{_h}}{\partial\theta}(z)dA_{\alpha}(z)\Big|,
\end{eqnarray*}
independently of both $\alpha$ and  $h.$
\end{lem}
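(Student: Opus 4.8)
The plan is to reduce everything to a radial derivative by means of the polar Cauchy–Riemann equations that already drive Lemma \ref{EqDirInt}. Writing $O_h=e^{u_{_h}+iv_{_h}}$, one has $\frac{\partial v_{_h}}{\partial\theta}=r\frac{\partial u_{_h}}{\partial r}$ at $z=re^{i\theta}$, and hence the two identities
\[
 |O_h(z)|^2\frac{\partial v_{_h}}{\partial\theta}(z)=\frac r2\frac{\partial}{\partial r}|O_h(z)|^2,\qquad
 h^2(\theta)\frac{\partial v_{_h}}{\partial\theta}(z)=\frac r2\frac{\partial}{\partial r}\big(h^2(\theta)\log|O_h(z)|^2\big),
\]
the second because $h(\theta)$ is independent of $r$. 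Subtracting them shows that $\big(|O_h(z)|^2-h^2(\theta)\big)\frac{\partial v_{_h}}{\partial\theta}(z)=\frac r2\frac{\partial\Psi}{\partial r}(z)$, where
\[
 \Psi(z):=|O_h(z)|^2-h^2(\theta)\log|O_h(z)|^2-h^2(\theta)\big(1-\log h^2(\theta)\big).
\]
Two elementary facts about $\Psi$ are the heart of the matter: by convexity of $t\mapsto t-h^2(\theta)\log t$ one has $\Psi\ge0$, and since $|O_h(re^{i\theta})|\to h(\theta)$ radially for a.e.\ $e^{i\theta}\in\T_{_h}$, one has $\Psi(z)\to0$ as $r\to1$. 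The vanishing of $\Psi$ at the boundary is precisely what compensates the divergence of $(1-r)^{\alpha-1}$ at $r=1$, which is why neither $|O_h|^2$ nor $h^2\log|O_h|^2$ may be integrated by parts on its own.

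With this in hand I would write
\[
 \int_{\M_{h}}h^{2}(\theta)\frac{\partial v_{_h}}{\partial\theta}\,dA_{\alpha}
 =\int_{\M_{h}}|O_h|^{2}\frac{\partial v_{_h}}{\partial\theta}\,dA_{\alpha}
 -\int_{\M_{h}}\frac r2\frac{\partial\Psi}{\partial r}\,dA_{\alpha}.
\]
For the first integral I would use $\int_{\D}|O_h|^{2}\frac{\partial v_{_h}}{\partial\theta}\,dA_{\alpha}\asymp\cD_{\alpha}(O_h)$ (and its nonnegativity) from \eqref{motassa2}, so that it equals $\cD_\alpha(O_h)$ minus the integral over $\D\setminus\M_{h}$; on the latter region $r\le1-\mu_{_h}(\theta)$, so integrating by parts in $r$ is harmless at $r=1$ and produces a term of order $\mu_{_h}^{\alpha-1}(\theta)|O_h(z_h(\theta))|^2$ plus a convergent bulk term. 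For the second integral I would integrate by parts in $r$ on each ray $[1-\mu_{_h}(\theta),1)$: the boundary term at $r=1$ vanishes because $\Psi\to0$ there (justified by first integrating to $1-\varepsilon$ and letting $\varepsilon\to0$, using finiteness of $\cD_\alpha$), leaving a boundary contribution of order $\mu_{_h}^{\alpha-1}(\theta)\,\Psi(z_h(\theta))$ at $r=1-\mu_{_h}(\theta)$ together with a positive bulk integral with kernel $\partial_r\big[r(1-r)^{\alpha-1}\big]\asymp(1-\alpha)(1-r)^{\alpha-2}$ tested against $\Psi\ge0$.

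The two boundary contributions at $z_h(\theta)=(1-\mu_{_h}(\theta))e^{i\theta}$ are controlled as follows. By \eqref{veb6} one has $h(\theta)\lesssim|O_h(z_h(\theta))|$ on $\M_{h}$, whence $\Psi(z_h(\theta))\lesssim|O_h(z_h(\theta))|^2$, and the factor $\mu_{_h}^{\alpha-1}(\theta)$ paired with $h^2(\theta)$ is integrable against $\cD_\alpha$ by Lemma \ref{dauglas}, more precisely by \eqref{Lem3.3bis}, giving $\int_{\T}h^{2}(\theta)\mu_{_h}^{\alpha-1}(\theta)\,d\theta\lesssim\cD_{\alpha}(O_h)+\|h\|_2^2$.

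The step I expect to be the main obstacle is the behaviour near $r=1$. First, one must rigorously justify the vanishing of the boundary term there, which is a regularization argument resting on the a.e.\ radial convergence $\Psi\to0$ and the absolute convergence of the $r$-integral coming from $\cD_\alpha(O_h)<\infty$. Second, and more seriously, one must bound the positive bulk integral $\int(1-r)^{\alpha-2}\Psi\,dr$ by $\cD_\alpha(O_h)$: since $(1-r)^{\alpha-2}$ is \emph{not} integrable at $r=1$, this forces a quantitative rate $\Psi(z)\lesssim\big(|O_h(z)|^2-h^2(\theta)\big)^2/h^2(\theta)$ to be played against the Dirichlet density $|O_h'|^2(1-r)^{\alpha}$ through a Hardy/Douglas-type comparison. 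It is exactly in this comparison, where the $(1-\alpha)$ produced by differentiating $r(1-r)^{\alpha-1}$ cancels the $1/(1-\alpha)$ arising from $\int_{1-\mu}^{1}(1-r)^{\alpha-2}\,dr$, that I expect the uniformity in $\alpha$ (and thus the clean form asserted here, refining the factor $1/(1-\alpha)$ of \eqref{g2}) to emerge.
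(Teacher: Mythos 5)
Your route is genuinely different from the paper's. The paper splits $\M_h$ into $\M_h\cap\K_h$ (handled via \eqref{veb6} and Lemma \ref{harmonic}) and $\LL^{2}_{h}=\M_h\setminus\K_h$, and on the latter compares $h^2(\theta)$ with $|O_h(z)|^2$ through the two integrals $\cI_1,\cI_2$ of \eqref{ss11}, using the pointwise bound \eqref{ss3} on $\partial v_{_h}/\partial\theta$, the identity \eqref{motassa2}, and Lemmas \ref{harmonic} and \ref{dauglas}; no radial integration by parts of a Bregman-type quantity appears anywhere. Your identity $\big(|O_h(z)|^2-h^2(\theta)\big)\frac{\partial v_{_h}}{\partial\theta}=\frac r2\partial_r\Psi$ with $\Psi\ge0$ is correct and attractive, and the cancellation of the factor $(1-\alpha)$ produced by differentiating $r(1-r)^{\alpha-1}$ against the $1/(1-\alpha)$ from $\int(1-r)^{\alpha-2}dr$ that you anticipate is real.

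There is, however, a genuine gap at exactly the step you flag as the main obstacle, and the tool you propose for it does not suffice. Both the vanishing of the boundary term $\varepsilon^{\alpha-1}\Psi((1-\varepsilon)e^{i\theta})$ and the finiteness of the bulk integral $\int_{1-\mu_{_h}}^{1}\Psi(z)(1-r)^{\alpha-2}dr$ require a bound of the form $\Psi(z)\lesssim|O_h(z)-O_h(e^{i\theta})|^2$ on \emph{all} of $\M_h$: with it, Jensen, \eqref{circ6} and Douglas' formula \eqref{DouglasFormulaalpha} control the bulk term, and Cauchy--Schwarz applied to $\int_r^1|O_h'(se^{i\theta})|\,ds$ gives $\Psi((1-\varepsilon)e^{i\theta})=o(\varepsilon^{1-\alpha})$ a.e., which is what actually kills the boundary term (the bare statement $\Psi\to0$ does not, since $\varepsilon^{\alpha-1}\to\infty$). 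The rate you propose, $\Psi\lesssim(|O_h|^2-h^2)^2/h^2=(|O_h|-h)^2(|O_h|+h)^2/h^2$, fails to deliver this on the part of $\M_h$ where $|O_h(z)|\gg h(\theta)$ --- precisely the set $\M_h\cap\K_h$ that forces the paper to invoke Lemma \ref{harmonic} separately --- because there it exceeds $|O_h(z)-O_h(e^{i\theta})|^2$ by the unbounded factor $|O_h(z)|^2/h^2(\theta)$, and no comparison with the Dirichlet density closes. What is needed instead is the elementary inequality $(t-a)-a\log(t/a)\le C_c(\sqrt t-\sqrt a)^2$, valid uniformly for $t\ge ca$, applied with $t=|O_h(z)|^2$, $a=h^2(\theta)$ and $c=e^{-82}$ supplied by \eqref{veb6} on $\M_h$; this yields $\Psi(z)\lesssim\big(|O_h(z)|-h(\theta)\big)^2\le|O_h(z)-O_h(e^{i\theta})|^2$ with an absolute constant and makes your scheme go through, but it is absent from your proposal. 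A smaller omission of the same kind: the bulk term over $\D\setminus\M_h$ in your first integral is not merely ``convergent''; after splitting $|O_h(z)|^2\lesssim|O_h(z)-O_h(e^{i\theta})|^2+h^2(\theta)$ it needs Douglas' formula and Lemma \ref{dauglas} just as the boundary contribution does.
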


\begin{proof}
Using \eqref{veb6} and Lemma \ref{harmonic},
\begin{eqnarray}\label{nsit1}
\int_{\M_{h}\cap\K_h}h^{2}(\theta)  \big| \frac{\partial v_{_h}}{\partial\theta}(z)\big|dA_{\alpha}(z)
&\leq& e^{82}\int_{\M_{h}\cap\K_h}  \big| O_{h}^{2}(z)\frac{\partial v_{_h}}{\partial\theta}(z)\big|dA_{\alpha}(z)
\nonumber\\&\lesssim& N_{\alpha}(h)\lesssim \cD_{\alpha}(O_h).
\end{eqnarray}
With \eqref{touta1} in mind, we observe that $\M_h\setminus\K_h=\LL^{2}_{h},$ where $\LL^{2}_{h}:=\LL^{2}_{h,\mu_{_h}}$.
Since $h(\theta)^2=|O_h^2(\theta)|\le |O_h^2(\theta)-O_h^2(z)|+|O_h(z)^2|$,
we have
\begin{eqnarray}\nonumber
&&\Big|\int_{\LL^{2}_{h}}h^{2}(\theta) \frac{\partial v_{_h}}{\partial\theta}(z)dA_{\alpha}(z)\Big|
\\\nonumber&\leq& \int_{\LL^{2}_{h}}\big|O^{2}_{h}(z)-O^{2}_{h}(\theta)\big|\big| \frac{\partial v_{_h}}{\partial\theta}(z)\big|dA_{\alpha}(z)
+ \Big|\int_{\LL^{2}_{h}}|O_{h}^{2}(z)| \frac{\partial v_{_h}}{\partial\theta}(z)dA_{\alpha}(z)\Big|
\\\label{ss11}&=:& \cI_{1}+\cI_{2}.
\end{eqnarray}
The following two facts are well known.
\begin{eqnarray}
\label{ss33}
xy\leq\frac{1}{2}(x^2+y^2), \qquad x,y>0,
\end{eqnarray}
\begin{eqnarray} \label{ss331}
(x+y)^2\leq2(x^2+y^2), \qquad x,y>0.
\end{eqnarray}

In particular, using first that $|O_h(z)|\le 2h(\theta)$ for $z\in \LL_h^2$, and then \eqref{ss33}
(setting $y=|O_h(z)-O_h(\theta)|/\sqrt{1-r}$),
yields
\begin{eqnarray}\nonumber
\cI_{1}
&\leq& 3\int_{\LL^{2}_{h}}h(\theta)\big|O_{h}(z)-O_{h}(\theta)\big|\big| \frac{\partial v_{_h}}{\partial\theta}(z)\big|dA_{\alpha}(z) 
\\\nonumber&\leq&\frac{3}{2}\int_{\LL^{2}_{h}}\frac{\big|O_{h}(z)-O_{h}(\theta)\big|^{2}}{1-r}dA_{\alpha}(z)
+\frac{3}{2}\int_{\LL^{2}_{h}}h^{2}(\theta)\big| \frac{\partial v_{_h}}{\partial\theta}(z)\big|^{2}(1-r)dA_{\alpha}(z).
\end{eqnarray}

Now, by \eqref{estimdv},
\begin{eqnarray}\nonumber
\big| \frac{\partial v_{_h}}{\partial\theta}(z)\big|
&\leq&\frac{1}{\pi}\int_{-\pi}^{\pi}\frac{\big|\log\frac{h(\varphi)}{h(\theta)}\big|}{|e^{i\varphi}-z|^{2}} d\varphi\nonumber\\
&\le& \Big|\frac{1}{\pi}\int_{h(\varphi)\le h(\theta)}\frac{\log\frac{h(\varphi)}{h(\theta)}}{|e^{i\varphi}-z|^{2}} d\varphi\Big|
+\frac{1}{\pi}\int_{h(\varphi)\ge h(\theta)}\frac{\log\frac{h(\varphi)}{h(\theta)}}{|e^{i\varphi}-z|^{2}} d\varphi\nonumber
\\\label{ss3}&\leq&2\frac{\big|\log\frac{|O_{h}(z)|}{h(\theta)}\big|}{1-r}
+\frac{2}{\pi}\int_{h(\varphi)\geq h(\theta)}\frac{\log\frac{h(\varphi)}{h(\theta)}}{|e^{i\varphi}-z|^{2}} d\varphi
,\qquad z\in\D_{_h}.
\end{eqnarray}

Applying this and
\eqref{ss331} to the sum on the right hand side in \eqref{ss3}
yields
\begin{eqnarray}\nonumber
\cI_{1}
&\lesssim&\int_{\LL^{2}_{h}}\frac{\big|O_{h}(z)-O_{h}(\theta)\big|^{2}}{1-r}dA_{\alpha}(z)
+\int_{\LL^{2}_{h}}h^{2}(\theta)\frac{\big|\log\frac{|O_{h}(z)|}{h(\theta)}\big|^{2}}{1-r}dA_{\alpha}(z)
\\\label{bord0}&&+  \int_{\LL^{2}_{h}}h^{2}(\theta)\big(\int_{ h(\varphi)\geq h(\theta)}
\frac{\big|\log\frac{h(\varphi)}{h(\theta)}\big|}{|e^{i\varphi}-z|^{2}} d\varphi \big)^{2}|(1-r)dA_{\alpha}(z).
\end{eqnarray}
In the above, the second integral is controlled by the first one since for almost
all points $z\in \LL^2_h$, we have 
from \eqref{veb6},
\begin{eqnarray}\label{bord2}
\big|\log\frac{|O_{h}(z)|}{h(\theta)}\big|\leq e^{41}\frac{\big|O_{h}(z)-O_{h}(\theta)\big|}{h(\theta)}.
\end{eqnarray}

Now since
$$ \big|\log\frac{h(\varphi)}{h(\theta)}\big|\leq \frac{\big|h(\varphi)-h(\theta)\big|}{h(\theta)},\qquad h(\varphi)\geq h(\theta),$$
Jensen's inequality gives
\begin{eqnarray*}\nonumber
&&{\Big(\frac{1}{2\pi}\int_{ h(\varphi)\geq h(\theta)}\frac{\big|\log\frac{h(\varphi)}{h(\theta)}\big|}{|e^{i\varphi}-z|^{2}} d\varphi \Big)^{2}
 =\Big(\frac{1}{2\pi (1-r^2)}\int_{h(\varphi)\geq h(\theta)}\frac{1-r^2}{|e^{i\varphi}-z|^{2}}{\big|\log\frac{h(\varphi)}{h(\theta)}\big|} d\varphi \Big)^{2}}
\\\label{bord1}\
&&\leq\frac{1}{2\pi h^{2}(\theta)(1-r^2)}\int_{h(\varphi)\geq h(\theta)}\frac{\big|h(\varphi)-h(\theta)\big|^{2}}{|e^{i\varphi}-z|^{2}} d\varphi ,
\quad z\in\D_{_h},
\end{eqnarray*}
which allows to control also the third integral in \eqref{bord0}:
\begin{eqnarray}\nonumber
\cI_{1}
&\lesssim&\int_{\D}\frac{\big|O_{h}(z)-O_{h}(\theta)\big|^{2}}{1-r}dA_{\alpha}(z)
+\int_{\D}\big(\int_{-\pi}^{\pi}\frac{\big|h(\varphi)-h(\theta)\big|^{2}}{|e^{i\varphi}-z|^{2}} d\varphi \big)dA_{\alpha}(z).
\end{eqnarray}
Again by Jensen's inequality,
\begin{eqnarray}\nonumber
|O_{h}(z)-O_{h}(\theta)|^2\leq\frac{1}{2\pi}\int_{-\pi}^{\pi}
\frac{1-r^2}{|e^{i\varphi}-z|^{2}}|O_h(\varphi)-O_h(\theta)|^2 d\varphi ,
\end{eqnarray}
for almost all points $z\in\D$ with respect to area Lebesgue measure.
This together with
\eqref{circ6} on the second term and then another application of 
\eqref{circ6} on the first term as well as Douglas' formula \eqref{DouglasFormulaalpha},
yield
\begin{eqnarray}\nonumber
\cI_{1}
&\lesssim&\int_{\D}\int_{-\pi}^{\pi}
\frac{\big|O_h(\varphi)-O_h(\theta)\big|^{2}}{|e^{i\varphi}-z|^{2}} d\varphi dA_{\alpha}(z)
+\int_{-\pi}^{\pi}\int_{-\pi}^{\pi}
\frac{\big|h(\varphi)-h(\theta)\big|^{2}}{|e^{i\varphi}-e^{i\theta}|^{2-\alpha}} d\varphi d\theta
\\\label{ss4}&\lesssim& \cD_\alpha(O_h).
\end{eqnarray}

Now we turn to the integral $\cI_{2}$. 
Again, we cannot use the triangular inequality directly in $\LL^2_h$ since we need to take
care of the sign of $\partial v_h/\partial \theta$. To this end,
we use $\LL^{2}_{h}=\D_{_h}\setminus\big\{\K_h\cup\LL^{1}_{h}\big\},$ 
where $\LL^{1}_{h}:=\LL^{1}_{h,\mu_{_h}}$. Then
\begin{eqnarray}\nonumber
\cI_{2}&\leq&\Big|\int_{\D}|O_{h}^{2}(z)| \frac{\partial v_{_h}}{\partial\theta}(z)dA_{\alpha}(z)\Big|
+\int_{\K_h} \big| O_{h}^{2}(z)\frac{\partial v_{_h}}{\partial\theta}(z)\big|dA_{\alpha}(z)
\\\label{bord3}&&+\int_{\LL^{1}_{h}}\big| O_{h}^{2}(z)\frac{\partial v_{_h}}{\partial\theta}(z)\big|dA_{\alpha}(z),
\end{eqnarray}
and so, by \eqref{motassa2} and Lemma \ref{harmonic},
\begin{eqnarray}\label{ss2}
\cI_{2}\lesssim\cD_{\alpha}(O_h)+
N_{\alpha}(h)+\int_{\LL^{1}_{h}}\big| O_{h}^{2}(z)\frac{\partial v_{_h}}{\partial\theta}(z)\big|dA_{\alpha}(z).
\end{eqnarray}
Since for $z\in \LL_h$ 
we have $|O_h(z)|\le 2h(\theta)$,
inequalities \eqref{circ5} and \eqref{cauchy} give
$|O_h^2\frac{\dst \partial v_h}{\dst \partial\theta}|\lesssim |\frac{\dst\partial O^2_h}{\dst\partial z}|
=2|O_h\frac{\dst \partial O_h}{\dst\partial z}|\le 2\times 2h(\theta)\times \frac{\dst 2h(\theta)}
{\dst 1-r}$,
and so
\begin{eqnarray}\nonumber
\int_{\LL^{1}_{h}}\big| O_{h}^{2}(z)\frac{\partial v_{_h}}{\partial\theta}(z)\big|dA_{\alpha}(z)
&\leq&8\alpha\int_{-\pi}^{\pi}h^{2}(\theta)\Big(\int_{r\leq 1-\mu_{_h}(\theta)}\frac{1}{(1-r)^{2-\alpha}}dr\Big)d\theta
\\\label{f2}&\leq&\frac{8\alpha}{1-\alpha}\int_{\mu_{_h}(\theta)<1}h^{2}(\theta)\mu^{\alpha-1}_{_h}(\theta)d\theta.
\end{eqnarray}
Combining inequalities \eqref{ss2} and \eqref{f2}, and applying Lemma \ref{dauglas},
\begin{eqnarray}\label{i2}
\cI_{2}\lesssim \frac{1}{1-\alpha}\cD_{\alpha}(O_h).
\end{eqnarray}
Hence, the desired result follows from the estimates \eqref{nsit1}, \eqref{ss11}, \eqref{ss4} and \eqref{i2}.
\end{proof}

\section{\bf The example} \label{sect6}

Recall that
for $0<\alpha<1$ and $\beta>0$,
we have defined the function
\begin{equation}\label{p0bis}
 h_{\beta}(\theta):=
\left\{
  \begin{array}{ll}
\displaystyle \frac{1}{\theta^{\frac{\alpha}{2}}\log^\beta\frac{\gamma}{\theta}}, & \qquad \theta\in (0,\pi] , \\
 \displaystyle c_0:=
h_{\beta}(\pi), 
& \qquad \theta
 \in (-\pi,0),
  \end{array}
\right.
\end{equation}
where the value of $\gamma$ ($=\pi e^{2\beta/\alpha}$) 
guarantees that $h_{\beta}$ is well defined,
decreasing on $(0,\pi]$.

We want to show the following result.

\begin{prop*}Let  $0<\alpha<1$ and $\beta>0$.
Then
\begin{enumerate}
  \item [(i)] For $N_\alpha(h_{\beta})<+\infty$ it is necessary and sufficient that $\beta>\frac{1}{2}.$
  \item [(ii)] For $O_{h_{\beta}}\in\cD_\alpha$ it is necessary and sufficient that $\beta>1-\frac{1}{2}\alpha.$
  \item [(iii)] For $\mathcal{C}_\alpha(h_{\beta})<+\infty$  it is necessary and sufficient that $\beta>1.$
\end{enumerate}
\end{prop*}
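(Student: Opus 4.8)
The plan is to localize everything at the singularity $\theta=0$: away from the origin $h_{\beta}$ is smooth and bounded between two positive constants (and is exactly the constant $c_0$ on $(-\pi,0)$), so the contribution of pairs $(\theta,\varphi)$ staying in a fixed region $\{|\theta|,|\varphi|\ge\varepsilon\}$ is finite for every $\beta$, and $h_\beta\in\mathcal{L}^2(\T)$ for all $\beta$ since $\alpha<1$. Throughout I would use $|e^{i\varphi}-e^{i\theta}|\asymp|\varphi-\theta|$ together with the asymptotics $h_{\beta}(\theta)\asymp\theta^{-\alpha/2}(\log(\gamma/\theta))^{-\beta}$ and $|h_{\beta}'(\theta)|\asymp\theta^{-\alpha/2-1}(\log(\gamma/\theta))^{-\beta}$ as $\theta\to0^{+}$. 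A key elementary fact, obtained by solving $h_{\beta}(\theta_{*})=\tfrac12 h_{\beta}(\theta)$, is that $\theta_{*}\asymp\theta$ with $\theta_{*}-\theta\gtrsim\theta$; consequently, for small $\theta$, the set $\T_{_h}^{-}(\theta)=\{\varphi:h_\beta(\varphi)\le\tfrac12 h_\beta(\theta)\}$ coincides, up to the constant region $(-\pi,0)$, with $\{\varphi>0:\varphi\gtrsim\theta\}$, and each of its points satisfies $|\varphi-\theta|\gtrsim\theta$. The three thresholds will come out ordered as $\tfrac12<1-\tfrac12\alpha<1$, which is precisely the claim that $N_\alpha$, $\mathcal{D}_\alpha$ and $\mathcal{C}_\alpha$ are pairwise inequivalent.

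For (i) I would split $N_{\alpha}(h_\beta)$ into a diagonal piece $|\varphi-\theta|\lesssim\theta$ and its complement. On the diagonal I replace $|h_\beta(\varphi)-h_\beta(\theta)|$ by $|h_\beta'(\theta)|\,|\varphi-\theta|$ and integrate in $\varphi$, using $\int_0^{\theta}s^{\alpha}\,ds\asymp\theta^{1+\alpha}$, to obtain the density $|h_\beta'(\theta)|^2\theta^{1+\alpha}\asymp\theta^{-1}(\log(\gamma/\theta))^{-2\beta}$; the diagonal thus contributes $\int_0^\pi\theta^{-1}(\log(\gamma/\theta))^{-2\beta}\,d\theta$, finite iff $\beta>\tfrac12$ by the substitution $u=\log(\gamma/\theta)$. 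For the off-diagonal part I use $|h_\beta(\varphi)-h_\beta(\theta)|\asymp h_\beta(\theta)$: over the constant region $\varphi\in(-\pi,0)$ one has $\int_{-\pi}^{0}(\theta-\varphi)^{\alpha-2}\,d\varphi\asymp\theta^{\alpha-1}$, giving again the density $h_\beta^2(\theta)\theta^{\alpha-1}=\theta^{-1}(\log(\gamma/\theta))^{-2\beta}$ and the same threshold $\beta>\tfrac12$, while the part with $\varphi$ bounded away from $0$ produces only $\int_0^\pi h_\beta^2(\theta)\,d\theta<\infty$. This proves (i).

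For (iii) I would invoke the trichotomy \eqref{logequiv}. On $\{h_\beta(\varphi)\stackrel{2}{\asymp} h_\beta(\theta)\}$ the integrand is $\asymp|h_\beta(\varphi)-h_\beta(\theta)|^2/|\varphi-\theta|^{2-\alpha}$ and, since this region is exactly $|\varphi-\theta|\lesssim\theta$, it reproduces the diagonal of $N_\alpha$ and is finite iff $\beta>\tfrac12$. By symmetry the two remaining regions are comparable, so it suffices to treat $\{h_\beta(\varphi)\le\tfrac12 h_\beta(\theta)\}$, where the integrand is $\asymp h_\beta^2(\theta)\log(h_\beta(\theta)/h_\beta(\varphi))/|\varphi-\theta|^{2-\alpha}$. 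The positive $\varphi$'s again give only $\beta>\tfrac12$ (the $\varphi$-integral converges after the scaling $\varphi=\theta s$); the decisive term is the constant region $\varphi\in(-\pi,0)$, where $\log(h_\beta(\theta)/c_0)\asymp\log(1/\theta)$ and $\int_{-\pi}^0(\theta-\varphi)^{\alpha-2}\,d\varphi\asymp\theta^{\alpha-1}$, yielding the density $h_\beta^2(\theta)\theta^{\alpha-1}\log(1/\theta)\asymp\theta^{-1}(\log(\gamma/\theta))^{1-2\beta}$. Hence this term is $\asymp\int_0^\pi\theta^{-1}(\log(\gamma/\theta))^{1-2\beta}\,d\theta$, finite iff $\beta>1$; this extra logarithm, absent in $N_\alpha$, is exactly what raises the threshold from $\tfrac12$ to $1$, proving (iii).

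For (ii) I would apply Theorem \ref{ra2}: as $\|h_\beta\|_2^2<\infty$, we have $O_{h_\beta}\in\cD_\alpha$ iff $N_\alpha(h_\beta)+n_\alpha(h_\beta)+\widetilde{n}_\alpha(h_\beta)<\infty$ (and for $\beta\le\tfrac12$ the bound \eqref{estimNalpha} with $N_\alpha=\infty$ already forces $O_{h_\beta}\notin\cD_\alpha$). The crux, and the step I expect to be the main obstacle, is to evaluate $\mu_{_h}$ for this $h$. Computing $a_{_h}(\theta)=a_{_{h,\mu_h}}(\theta)$, the constant region dominates and gives $a_{_h}(\theta)\asymp\log(1/\theta)/\theta$; since moreover $\T_{_h}^{-}(\theta)$ lies at distance $\gtrsim\theta$ from $\theta$, for $\delta<\theta$ one has $\widetilde{a}_{_{h,\delta}}(\theta)=0$ and $\delta a_{_{h,\delta}}(\theta)\asymp\delta\log(1/\theta)/\theta$, so the defining condition in \eqref{nana1} yields $\mu_{_h}(\theta)\asymp\theta/\log(1/\theta)$. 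Two consequences follow: since $\mu_{_h}(\theta)\ll\theta$, the region $\{h_\beta(\varphi)\le\tfrac12 h_\beta(\theta),\,|\varphi-\theta|\le\mu_{_h}(\theta)\}$ is empty near the origin, whence $\widetilde{n}_\alpha(h_\beta)<\infty$ for all $\beta$; and $n_\alpha(h_\beta)\asymp\int_0^\pi h_\beta^2(\theta)a_{_h}^{1-\alpha}(\theta)\,d\theta\asymp\int_0^\pi\theta^{-1}(\log(\gamma/\theta))^{1-\alpha-2\beta}\,d\theta$, which is finite iff $\beta>1-\tfrac12\alpha$. As $1-\tfrac12\alpha>\tfrac12$, this is the binding constraint, and together with (i) it gives $O_{h_\beta}\in\cD_\alpha$ iff $\beta>1-\tfrac12\alpha$, completing the proof.
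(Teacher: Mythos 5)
Your proposal is correct, and for parts (i) and (iii) it follows essentially the same computations as the paper (split at the diagonal $|\varphi-\theta|\lesssim\theta$, use $|h_\beta'(\theta)|\asymp\theta^{-1-\alpha/2}\log^{-\beta}(\gamma/\theta)$ there, and isolate the constant region $\varphi\in(-\pi,0)$, which produces the extra $\log(\gamma/\theta)$ responsible for raising the threshold from $\tfrac12$ to $1$ in $\mathcal{C}_\alpha$). The one place where your route genuinely diverges is the sufficiency half of (ii): you determine the sharp order $\mu_{_h}(\theta)\asymp\theta/\log(1/\theta)$ and run both directions through Theorem \ref{ra2}, using that $\T_{_h}^{-}(\theta)$ sits at distance $\gtrsim\theta$ from $e^{i\theta}$ so that $\widetilde{a}_{_{h,\delta}}(\theta)=0$ and $a_{_{h,\delta}}(\theta)$ is constant in $\delta$ for $\delta\lesssim\theta$; this makes $\widetilde{n}_\alpha(h_\beta)$ finite for trivial reasons (empty domain near the singularity) and reduces everything to the two--sided estimate $n_\alpha(h_\beta)\asymp\int_0^\pi\theta^{-1}\log^{1-\alpha-2\beta}(\gamma/\theta)\,d\theta$. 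The paper instead exploits the infimum over $\lambda\in\Lambda$ in Theorem \ref{ra} and tests with the explicit $\lambda(\theta)=|\theta|/4$, bounding $\widetilde{n}_\alpha(h,\lambda)\lesssim N_\alpha(h)$ via $h(\varphi)\asymp h(\theta)$ on $|e^{i\varphi}-e^{i\theta}|\le\lambda(\theta)$, and only uses the one--sided bound $\mu_{_h}(\theta)\le\theta$ (from $\theta a_{_{h,\theta}}(\theta)\to\infty$) for the necessity. Your version costs a little more work on $\mu_{_h}$ but is more unified and yields the precise size of $\mu_{_h}$ as a by--product; the paper's version for sufficiency is lighter because it never needs a lower bound on $\mu_{_h}$. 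Two cosmetic points you may wish to tighten: the distance from $e^{i\theta}$ to $\T_{_h}^{-}(\theta)$ in the chordal metric is $\ge\tfrac{2}{\pi}\theta$ rather than $\ge\theta$ (harmless for all the $\asymp$ statements), and the finiteness of $\widetilde{n}_\alpha(h_\beta)$ away from $\theta=0$ should be justified by the boundedness of $h^2(\theta)\log(h(\theta)/h(\varphi))$ there together with the local integrability of $|e^{i\varphi}-e^{i\theta}|^{\alpha-2}$ in two variables.
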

In order to not overload notation in our following discussions, we will set $h=h_{\beta}$.
Note that since $\alpha<1$, we can check easily that $h\in\mathcal{L}^{2}(\T)$ and satisfies the condition \eqref{logint}, and  hence $O_h\in\cH^{2}.$

For the convenience of the reader all estimates in the proof below will be done on
$[-\pi,\pi]$ rather than on $\T$.

\begin{proof}
\underline{Assertion (i)}

We have
\begin{eqnarray}\nonumber
N_\alpha(h)&\asymp&\int_{-\pi}^{\pi}\int_{-\pi}^{\pi}\frac{|h(\varphi)-h(\theta)|^2}{|e^{i\varphi}-e^{i\theta}|^{2-\alpha}}d\varphi d\theta
\\\nonumber
&=&
2\int_{0}^{\pi}\left[\int_{-\pi}^{0}\frac{\Big(\frac{1}{\theta^{\frac{\alpha}{2}}\log^\beta\frac{\gamma}{\theta}}
-c_0\Big)^2}{|e^{i\varphi}-e^{i\theta}|^{2-\alpha}}d\varphi\right] d\theta
\\\nonumber&&+
2\int_{0}^{\pi}\left[\int_{0<\theta<\varphi\leq \pi}\frac{\Big(
 \frac{1}{\theta^{\frac{\alpha}{2}}\log^\beta\frac{\gamma}{\theta}}
 -\frac{1}{\varphi^{\frac{\alpha}{2}}\log^\beta\frac{\gamma}{\varphi}}
\Big)^2}{|e^{i\varphi}-e^{i\theta}|^{2-\alpha}}d\varphi\right] d\theta
\\\label{p01}
&=:& \cI_1+\cI_2.
\end{eqnarray}
Since $|e^{i\varphi}-e^{i\theta}|=2|\sin\frac{\varphi-\theta}{2}|,$ then
\begin{eqnarray}\label{then}
|e^{i\varphi}-e^{i\theta}|
 \asymp \left\{
 \begin{array}{ll}
 |\varphi-\theta|, & \quad \text{ if } \theta\in(0,\pi/2) \text{ and }\varphi\in(-\pi,0),\\
 |\varphi-\theta+2\pi|, & \quad  \text{ if }\theta\in(\pi/2,\pi) \text{ and }\varphi\in(-\pi,0).
 \end{array}
 \right.
\end{eqnarray}
 We estimate the inner integral in the first term $\cI_1$:
\begin{eqnarray*}
 \int_{-\pi}^{0}\frac{\Big(\frac{1}{\theta^{\frac{\alpha}{2}}\log^\beta\frac{\gamma}{\theta}}
-c_0\Big)^2}{|e^{i\varphi}-e^{i\theta}|^{2-\alpha}}d\varphi
 \simeq \Big(\frac{1}{\theta^{\frac{\alpha}{2}}\log^\beta\frac{\gamma}{\theta}}
-c_0\Big)^2 \frac{1}{\theta^{1-\alpha}},\qquad \theta\in (0,\pi/2).
\end{eqnarray*}
Now taking the outer integral in $\cI_1$, the convergence of which does not depend
on the behavior on $(\pi/2,\pi)$,
we get
\[
 \cI_1<\infty \quad \Leftrightarrow\quad
 \int_0^{\pi/2}\Big(\frac{1}{\theta^{\frac{\alpha}{2}}\log^\beta\frac{\gamma}{\theta}}
-c_0\Big)^2 \frac{1}{\theta^{1-\alpha}}d\theta<\infty,
\]
and since on $(0,\pi/2)$ we have
\[
\frac{1}{\theta^{\frac{\alpha}{2}}\log^\beta\frac{\gamma}{\theta}}
-c_0\asymp \frac{1}{\theta^{\frac{\alpha}{2}}\log^\beta\frac{\gamma}{\theta}},
\]
this yields
\[
 \cI_1<\infty\quad \Leftrightarrow \quad
 \int_0^{\pi/2}\frac{1}{\theta^{{\alpha}}\log^{2\beta}\frac{\gamma}{\theta}}
  \frac{1}{\theta^{1-\alpha}}d\theta=
 \int_0^{\pi/2}\frac{1}{\theta\log^{2\beta}\frac{\gamma}{\theta}}<\infty.
\]
Hence
\[
 \cI_1<\infty \quad \Leftrightarrow \quad 2\beta>1,
\]
which yields the necessity in (i) of the proposition.
\\

Let us discuss the second integral $\cI_2$. Since $\frac{\dst h(\theta)-h(\varphi)}{\dst
\theta-\varphi}=h'(\xi)$ for some $\theta\le\xi\le \varphi$,
we will be interested in the derivative of $h$:
\begin{equation}\label{cajol}
|h'(\theta)|\asymp \frac{1}{\theta^{1+\frac{1}{2}\alpha}\log^\beta\frac{\gamma}{\theta}},\qquad  \theta\in]0,\pi[.
\end{equation}
As above
\begin{eqnarray}\label{also}
|e^{i\varphi}-e^{i\theta}| \asymp |\varphi-\theta|, \qquad \theta,\varphi\in(0,\pi).
\end{eqnarray}
So
\begin{eqnarray}\nonumber
\cI_2
&=&2\int_{0}^{\pi}\int_{{\theta}\leq\varphi\leq 2\theta\atop \varphi\le \pi}
+2\int_{0}^{\pi}\int_{2\theta\leq\varphi\leq \pi}
\\\nonumber
&\lesssim&2\int_{0}^{\pi}\Big(\big(\sup_{\theta\leq\varphi\leq 2\theta\atop \varphi\leq \pi} |h'(\varphi)|\big)^2
\int_{\theta\leq\varphi\leq 2\theta\atop \varphi\le\pi}(\varphi-\theta)^{\alpha}d\varphi\Big)d\theta
\\\nonumber
&&+ 2\int_{0}^{\pi}
\frac{1}{\theta^{\alpha}\log^{2\beta}\frac{\gamma}{\theta}}
\Big(\int_{2\theta\le\varphi\le \pi}
 \frac{1}{(\varphi-\theta)^{2-\alpha}}d\varphi\Big)d\theta
\\\label{p03}
&\lesssim&
 \int_{0}^{\pi} \frac{1}{\theta\log^{2\beta}\frac{\gamma}{\theta}}d\theta,
\end{eqnarray}
which converges when $2\beta>1$. As a result we deduce the sufficient
part in (i) of the proposition.\\

\underline{Assertion (ii)}

Now we set
$$\lambda(\theta):=\frac{|\theta|}{4},\qquad |\theta|\leq \pi.$$
It is clear that $\lambda\in\Lambda$ and
\begin{equation}\label{p111}
|{\theta}|\asymp|\varphi|,\qquad
|\theta|\leq \pi \text{ and }|e^{i\varphi}-e^{i\theta}|\leq\lambda(\theta).
\end{equation}
From \eqref{p111} and the explicit form of $h$, we deduce that
\begin{equation}\label{pequiv}
h(\varphi){\asymp}h(\theta) ,\qquad 0<\theta,\varphi  \leq \pi \text{ and }|e^{i\varphi}-e^{i\theta}|\leq\lambda(\theta).
\end{equation}
Hence
\begin{equation}\label{p12}
\big|\log \frac{h(\theta)}{h(\varphi)}\big|\asymp \frac{|h(\theta)-h(\varphi)|}{h(\theta)},\qquad0<\theta,\varphi \leq \pi \text{ and }|e^{i\varphi}-e^{i\theta}|\leq\lambda(\theta).
\end{equation}

It is also obvious that when $-\pi<\theta<0$, then for no $\varphi$ we can have
$h(\varphi)\le \frac{1}{2}h(\theta)$, so that in the integration for $\tilde{n}_{\alpha}$ we
only need to integrate for $\theta\in (0,\pi)$.

Thus
\begin{eqnarray}\nonumber
\widetilde{n}_\alpha(h,\lambda)
&=&\int_{0}^{\pi}h^{2}(\theta) \int_{h(\varphi)\le\frac{1}{2}h( \theta)\atop |e^{i\varphi}-e^{i\theta}|\leq \lambda(\theta)}\frac{\log \frac{h(\theta)}{h(\varphi)}}{|e^{i\varphi}-e^{i\theta}|^{2-\alpha}}d\varphi d\theta
\\\nonumber
&\lesssim&\int_{-\pi}^{\pi} \int_{|e^{i\varphi}-e^{i\theta}|\leq \lambda(\theta)}\frac{\big|h(\theta)- h(\varphi)\big|^2}{|e^{i\varphi}-e^{i\theta}|^{2-\alpha}}d\varphi d\theta
\\\label{p13}&\lesssim& N_\alpha(h),
\end{eqnarray}
which, by assertion (i), converges when $\beta>1/2$, and so also when $\beta>1-\alpha/2$.

It remains to estimate $n_\alpha(h,\lambda).$
For the same reason as above, when computing $n_{\alpha}$ we only need to integrate
over $(0,\pi)$:
\begin{eqnarray}\nonumber
n_{\alpha}(h,\lambda)
&=&\int_{0}^{\pi}h^{2}(\theta) \Big(\int_{h(\varphi)\le \frac{1}{2}h(\theta)\atop |e^{i\varphi}-e^{i\theta}|\geq \lambda(\theta)}\frac{\log \frac{h(\theta)}{h(\varphi)}}{|e^{i\varphi}-e^{i\theta}|^2}d\varphi\Big)^{1-\alpha}d\theta
\\\nonumber&\lesssim& \int_{0}^{\pi}h^{2}(\theta) \Big(\int_{|e^{i\varphi}-e^{i\theta}|\geq \lambda(\theta)}\frac{\log \frac{h(\theta)}{c_0}}{|e^{i\varphi}-e^{i\theta}|^2}d\varphi\Big)^{1-\alpha}d\theta.
\\\nonumber&\lesssim& \int_{0}^{\pi}h^{2}(\theta) \Big(\frac{\log \frac{h(\theta)}{c_0}}{\theta}\Big)^{1-\alpha}d\theta.
\end{eqnarray}
We have
\begin{equation}\label{al}
 \log\frac{h(\theta)}{c_0}\asymp \frac{\alpha}{2}\log\frac{\gamma}{\theta},\qquad \theta\in (0,\pi).
\end{equation}
Hence
\[
 h^{2}(\theta) \Big(\frac{\log \frac{h(\theta)}{c_0}}{\theta}\Big)^{1-\alpha}\asymp \frac{1}{\theta\log^{2\beta-1+\alpha}\frac{\gamma}{\theta}},\qquad \theta\in (0,\pi).
\]
We get
\beqa
\int_{0}^{\pi}h^{2}(\theta) \Big(\frac{\log \frac{h(\theta)}{c_0}}{\theta}\Big)^{1-\alpha}d\theta
 \asymp \int_{0}^{\pi} \frac{1}{\theta\log^{2\beta-1+\alpha}\frac{2\pi}{\theta}}d\theta,
\eeqa
which converges when $2\beta-1+\alpha>1$ or $\beta>1-\frac{\alpha}{2}$.
This achieves the sufficiency in (ii).
\\

Let us turn to the necessity of this condition.
We fix a point  $\theta\in]0,{\pi}[.$ Observe that 
\begin{eqnarray}\nonumber
\theta a_{_{h,\theta}}(\theta)
 &=& \frac{\theta}{2\pi}\int_{h(\varphi)\le \frac{1}{2}h(\theta)\atop |e^{i\varphi}-e^{i\theta}|\geq \theta}\frac{\log \frac{h(\theta)}{h(\varphi)}}{|e^{i\varphi}-e^{i\theta}|^2}d\varphi
\\\nonumber&\geq& \frac{\theta}{2\pi}\int_{\varphi\in(-\pi,0)\atop |e^{i\varphi}-e^{i\theta}|\geq \theta}\frac{\log \frac{h(\theta)}{c_0}}{|e^{i\varphi}-e^{i\theta}|^2}d\varphi
\\\label{q2}&\gtrsim&
\log h(\theta) \longrightarrow \infty,\qquad \text{ as }\theta\to 0.
\end{eqnarray}
In particular, there is a number $0<\delta\leq \pi/4$, 
such that
\begin{eqnarray}\label{q22}
 \theta a_{_{h,\theta}}(\theta)>2,\qquad 0<\theta<\delta.
\end{eqnarray}
It follows
$$\mu_{_h}(\theta)\leq \theta,\qquad 0<\theta<\delta.$$
Thus, for $0<\theta<\delta,$
\begin{eqnarray}\nonumber
a_{_{h}}(\theta)&=& \frac{1}{2\pi}\int_{h(\varphi)\le\frac{1}{2}h(\theta)\atop |e^{i\varphi}-e^{i\theta}|\geq \mu_{_h}(\theta)}\frac{\log \frac{h(\theta)}{h(\varphi)}}{|e^{i\varphi}-e^{i\theta}|^2}d\varphi
\\\nonumber
&\geq& \frac{1}{2\pi}\int_{\varphi\in(-\pi,0)\atop |e^{i\varphi}-e^{i\theta}|\geq \theta}\frac{\log \frac{h(\theta)}{c_0}}{|e^{i\varphi}-e^{i\theta}|^2}d\varphi
\\\label{q3}
&\gtrsim& \frac{\log h(\theta)}{\theta},
\end{eqnarray}
which gives
\begin{eqnarray}\nonumber
n_{\alpha}(h)&=&\int_{\mu_{_h}(\theta)<1}h^{2}(\theta) a^{1-\alpha}_{_{h}}(\theta)d\theta
\geq\int_{0}^{\delta}h^{2}(\theta) a^{1-\alpha}_{_{h}}(\theta)d\theta
\\\nonumber
&\gtrsim&\int_0^{\delta}\frac{1}{\theta^{\alpha}\log^{2\beta}\frac{\gamma}{\theta}}
 \left(\frac{\log\frac{\gamma}{\theta}}{\theta}\right)^{1-\alpha}d\theta
 \\\label{q4}&\asymp&\int_{0}^{\delta} \frac{1}{{\theta}\log^{2\beta+\alpha-1}\frac{\gamma}{{\theta}}}d\theta.
\end{eqnarray}
Hence, the condition $\beta>1-\frac{1}{2}\alpha$ is necessary for $n_{\alpha}(h)<+\infty$,
which finishes the proof of the second assertion.
\\

\underline{Assertion (iii)}

Clearly, there is a constant $k>1$ such that
$h(\theta)>k h(\varphi)=c_0$ when
$\varphi\in [-\pi,0[$  and $\theta\in[0,\pi/2[$. This yields
\begin{eqnarray}\label{equivp31}
h^2(\theta)-h^2(\varphi)\asymp h^2(\theta),\qquad \varphi\in [-\pi,0[\text{ and }\theta\in[0,\pi/2[.
\end{eqnarray}
Using \eqref{then} and \eqref{al},
\begin{eqnarray}\nonumber
\mathcal{C}_\alpha(h)&\gtrsim&
\int_0^{{\pi/2}}\Big(\int_{-\pi}^{0}
\frac{\big(h^2(\theta)-h^2(\varphi)\big)\log \frac{h(\theta)}{h(\varphi)}}{|{\varphi}-{\theta}|^{2-\alpha}} d\varphi\Big)d\theta
\gtrsim \int_0^{{\pi/2}}\frac{h^2(\theta)\log \frac{h(\theta)}{c_0}}{{\theta}^{1-\alpha}}
d\theta\\ \nonumber
\label{p11}&\asymp&\int_{0}^{\pi/2} \frac{1}{{\theta}\log^{2\beta-1}\frac{\gamma}{{\theta}}}d\theta.
\end{eqnarray}
Hence the condition $\beta>1$ is necessary for $\mathcal{C}_\alpha(h)<+\infty.$

We now show the sufficiency of this condition.
Since the function $h$ is constant on $(-\pi,0)$, there is nothing to prove
when $\varphi,\theta\in (-\pi,0)$. We now consider the case when $\theta\in (0,\pi)$ and $\varphi\in (-\pi,0).$
We have in view of \eqref{equivp31} and \eqref{then}
\begin{eqnarray}\nonumber
\lefteqn{\cI:=\int_0^{\pi}\Big(\int_{-\pi}^{0}
\frac{\big(h^2(\theta)-h^2(\varphi)\big)\log \frac{h(\theta)}{h(\varphi)}}{|e^{i\varphi}-e^{i\theta}|^{2-\alpha}} d\varphi\Big)d\theta}
\\\nonumber&&\asymp\int_0^{\pi/2}\Big(\int_{-\pi}^{0}
\frac{h^2(\theta)\log \frac{h(\theta)}{c_0}}{|\varphi-\theta|^{2-\alpha}} d\varphi\Big)d\theta
+\int_{\pi/2}^{\pi}\Big(\int_{-\pi}^{0}
\frac{h^2(\theta)\log \frac{h(\theta)}{c_0}}{|e^{i\varphi}-e^{i\theta}|^{2-\alpha}} d\varphi\Big)d\theta
\\\nonumber&&\lesssim \int_{0}^{\pi/2} \frac{1}{{\theta}\log^{2\beta-1}\frac{\gamma}{{\theta}}}d\theta
+\int_{\pi/2}^{\pi} h^2(\theta)\log \frac{h(\theta)}{c_0}
 \int_{-\pi}^0 \frac{1}{|\varphi-\theta+2\pi|^{2-\alpha}}d\varphi d\theta.
\end{eqnarray}
The second term is of no harm since $h^2(\theta)\log \frac{\dst h(\theta)}{\dst c_0}$ is
bounded on $[\pi/2,\pi]$. Hence $\cI$ converges if and only if
$\int_{0}^{\pi/2} \frac{1}{{\theta}\log^{2\beta-1}\frac{\gamma}{{\theta}}}d\theta$ converges,
which happens when $\beta>1.$
It remains to check the case when $\varphi,
\theta\in (0,\pi)$. By  \eqref{also},
\begin{eqnarray*}\nonumber
\lefteqn{\int_0^{\pi}\int_{0}^{\pi}
\frac{\big(h^2(\theta)-h^2(\varphi)\big)\log \frac{h(\theta)}{h(\varphi)}}{|e^{i\varphi}-e^{i\theta}|^{2-\alpha}} d\varphi d\theta}
\\&&=
2 \int_0^{\pi}\Big(\int_{0<\varphi<\pi}
\frac{\big(h^2(\theta)-h^2(\varphi)\big)\log \frac{h(\theta)}{h(\varphi)}}{|e^{i\varphi}-e^{i\theta}|^{2-\alpha}} d\varphi\Big)d\theta
\\\nonumber&&\asymp
\int_0^{\pi}\Big(\int_{0<\varphi<\pi}
\frac{\big(h^2(\theta)-h^2(\varphi)\big)\log \frac{h(\theta)}{h(\varphi)}}{|\varphi -\theta|^{2-\alpha}} d\varphi\Big)d\theta.
\end{eqnarray*}
Clearly
\begin{eqnarray}\nonumber
&& \int_0^{{\pi}}\Big(\int_{0<\varphi<\pi\atop |\varphi-\theta|\geq\frac{1}{2}\theta}
\frac{\big(h^2(\theta)-h^2(\varphi)\big)\log \frac{h(\theta)}{h(\varphi)}}{|{\varphi}-{\theta}|^{2-\alpha}} d\varphi\Big)d\theta
 \\&\le&  \int_0^{{\pi}}\Big(\int_{|\varphi-\theta|\geq\frac{1}{2}\theta}
\frac{h^2(\theta)\log \frac{h(\theta)}{c_0}} {|{\varphi}-{\theta}|^{2-\alpha}} d\varphi\Big)d\theta
\lesssim \int_{0}^{\pi} \frac{1}{{\theta}\log^{2\beta-1}\frac{2\pi}{{\theta}}}d\theta,
\end{eqnarray}
which as in the previous estimate is bounded when $\beta>1$.
Finally we consider the integral for $|\varphi-\theta|\leq\frac{1}{2}\theta.$ We observe first that
in this case, as already discussed earlier,
$h(\theta)\asymp h(\varphi)$ and
\[
 \left|\log\frac{h(\theta)}{h(\varphi)}\right|
 \lesssim \frac{|h(\theta)-h(\varphi)|}{h(\theta)}.
\]
Hence
\begin{eqnarray*}
 &&\int_0^{{\pi}}\Big(\int_{0<\varphi<\pi\atop h(\varphi)\leq h(\theta)\text{ and }|\varphi-\theta|\leq\frac{1}{2}\theta}
\frac{\big(h^2(\theta)-h^2(\varphi)\big)\log \frac{h(\theta)}{h(\varphi)}}{|{\varphi}-{\theta}|^{2-\alpha}} d\varphi\Big)d\theta
 \\&\lesssim&
\int_0^{{\pi}}\Big(\int_{0}^{\pi}
\frac{\big|h(\theta)-h(\varphi)\big|^2 }{|{\varphi}-{\theta}|^{2-\alpha}} d\varphi\Big)d\theta
\lesssim N_{\alpha}(h),
\end{eqnarray*}
which converges when $2\beta>1$ and in particular when $\beta>1.$
\end{proof}

\noindent{\bf Acknowledgements.} The second named author would like to thank the
referee for careful reading of the manuscript.

\end{document}